\documentclass[10pt]{amsart}

\usepackage{hyperref}
\usepackage{enumerate}
\usepackage{comment}

\makeatletter

\@namedef{subjclassname@2010}{

  \textup{2010} Mathematics Subject Classification}

\usepackage{amssymb, amsmath,amsthm}
\usepackage{mathrsfs}
\newtheorem{thm}{Theorem}[section]

\newtheorem{prop}[thm]{Proposition}
\newtheorem{conj}[thm]{Conjecture}

\newtheorem{cor}[thm]{Corollary}
\newtheorem{lem}[thm]{Lemma}
\theoremstyle{definition}
\newtheorem{rem}[thm]{Remark}
\newtheorem{def1}[thm]{Definition}

\newcommand{\ra}{\rightarrow}
\newcommand{\bk}{\backslash}
\newcommand{\mc}{\mathcal}
\newcommand{\mf}{\mathfrak}
\newcommand{\mb}{\mathbb}

\renewcommand{\ss}{\substack}

\newcommand{\e}{\varepsilon}

\newcommand{\mbf}{\boldsymbol}

\renewcommand{\bar}{\overline}

\begin{document}
\title{On S\'{a}rk\"{o}zy's Local Extrema Conjectures}
\author{Alexander P. Mangerel}
\address{Department of Mathematical Sciences, Durham University, Stockton Road, Durham, DH1 3LE, UK}
\email{smangerel@gmail.com}
\maketitle
\begin{abstract}
Let $f: \mb{N} \ra (0,\infty)$ be a multiplicative function, and let $\omega > 0$. We consider the frequency with which patterns of the form
$$
f(n) > \omega \max\{f(n-1),f(n+1)\}, \quad f(n) < \frac{1}{\omega}\min\{f(n-1),f(n+1)\}
$$
occur in the sequence $(f(n))_n$, and give necessary conditions on $f$ so that the set of occurrences of at least one of these patterns has  logarithmic density $0$. This includes a complete classification of all such completely multiplicative functions. 
As a particular application, we characterise (up to prime powers $p^k$, $k \geq 2$) all multiplicative functions $f: \mb{N} \ra \mb{N}$ that have either finitely many local maxima, or finitely many local minima. This answers, in a strong form, two conjectures of A. S\'{a}rk\"{o}zy from 2001. \\
The proof is based on a natural ``pretentiousness'' dichotomy for additive functions. Key tools in the analysis include an analogue of the Matom\"{a}ki-Radziwi\l\l{} theorem for additive functions, due to the author, as well as a new bound for the concentration of the gaps $g(n+1)-g(n)$ of an additive function $g$, which may be of independent interest.  
\end{abstract}
\section{Introduction}
For a parameter $\omega > 0$ and an arithmetic function $f: \mb{N} \ra \mb{R}$, define the sets
\begin{align*}
\mc{L}_f(\omega) &:= \{n \in \mb{N} : \ f(n) > \omega \max\{f(n-1),f(n+1)\}\}, \\ \mc{S}_f(\omega) &:= \{n \in \mb{N} : \ f(n) < \frac{1}{\omega} \min\{f(n-1),f(n+1)\}\}.
\end{align*}
Set also $\mc{L}_f := \mc{L}_f(1)$ and $\mc{S}_f := \mc{S}_f(1)$. It is easily observed that $\mc{L}_f$ and $\mc{S}_f$ respectively correspond to the sets of local maxima and local minima of $f$. 
Our objective in this note is to study two conjectures of A. Sark\H{o}zy \cite{Sarkozy} concerning the size of the sets $\mc{L}_f(\omega)$ and $\mc{S}_f(\omega)$, when $f$ is a  multiplicative function taking positive integer values. \\
Sark\H{o}zy's first conjecture gives an exact classification of those multiplicative functions that have either finitely many local maxima, or finitely many local minima.
\begin{conj}[\cite{Sarkozy}, Conjecture 53] \label{conj:53}
Let $f: \mb{N} \ra \mb{N}$ be a multiplicative function such that for each $k \in \mb{N}_0$ there are at least two primes $p_1,p_2$ such that $f(p_j) \neq p_j^k$, $j = 1,2$. Then $\mc{L}_f$ and $\mc{S}_f$ are both infinite sets.
\end{conj}
His second conjecture, which deals with more general values of $\omega>0$, is as follows.
\begin{conj}[\cite{Sarkozy}, Conjecture 54]\label{conj:54}
Let $f: \mb{N} \ra \mb{N}$ be a multiplicative function such that for each $k \in \mb{N}_0$ there are infinitely many primes $p$ such that $f(p) \neq p^k$. Then for every $\omega > 0$ the sets $\mc{L}_f(\omega)$ and $\mc{S}_f(\omega)$ are both infinite.
\end{conj}
\begin{rem} \label{rem:MonoOmega}
Observe that for $\omega_1 \geq \omega_2$ we have
$$
\mc{L}_f(\omega_1) \subseteq \mc{L}_f(\omega_2), \quad \mc{S}_f(\omega_1) \subseteq \mc{S}_f(\omega_2).
$$
Therefore, Conjecture \ref{conj:54} with $\omega \in (0,1]$ follows immediately from Conjecture \ref{conj:53}. 
\end{rem}
In the cases $0 < \omega \leq 1$ we settle Conjecture \ref{conj:54} (and therefore also Conjecture \ref{conj:53}, by Remark \ref{rem:MonoOmega}) in a strong form. To state our result, we recall that the \emph{upper} and \emph{lower logarithmic densities} of a set $\mc{A} \subseteq \mb{N}$ are respectively defined as
$$
\overline{\delta}(\mc{A}) := \limsup_{x \ra \infty} \frac{1}{\log x} \sum_{n \in \mc{A} \cap [1,x]} \frac{1}{n}, \quad \underline{\delta}(\mc{A}) := \liminf_{x \ra \infty} \frac{1}{\log x} \sum_{n \in \mc{A} \cap [1,x]} \frac{1}{n},
$$
and that $\mc{A}$ is said to have \emph{logarithmic density} $\delta(\mc{A})$ if $\overline{\delta}(\mc{A}) = \underline{\delta}(\mc{A}) = \delta(\mc{A})$.
\begin{thm}[Conjecture \ref{conj:53} is true]\label{thm:multAtMost1}
Let $f: \mb{N} \ra \mb{N}$ be a multiplicative function such that for each $k \in \mb{N}_0$ there are at least two primes $p_1,p_2$ such that $f(p_j) \neq p_j^k$ for $j = 1,2$. Then for each $\omega \in (0,1]$, both $\mc{L}_f(\omega)$ and $\mc{S}_f(\omega)$ have positive upper logarithmic density. In particular, $\mc{L}_f$ and $\mc{S}_f$ are both infinite.
\end{thm}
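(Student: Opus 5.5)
By Remark \ref{rem:MonoOmega}, it suffices to treat $\omega = 1$ only if we prove positive upper logarithmic density for every $\omega\in(0,1]$ directly, so instead I will argue by contradiction. Suppose that for some $\omega \in (0,1]$ one of the sets, say $\mc{L}_f(\omega)$, has upper logarithmic density $0$. The case of $\mc{S}_f(\omega)$ is symmetric: replace $f$ by $1/f$, which swaps the two patterns and preserves multiplicativity.

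Since $f$ takes positive values, I pass to the additive function $g := \log f$. Then $n \notin \mc{L}_f(\omega)$ means
$$
\max\{g(n-1)-g(n),\, g(n+1)-g(n)\} \geq -\log \omega \geq 0 .
$$
So for almost all $n$ (in logarithmic density), one of the adjacent gaps $g(n\pm1)-g(n)$ is at least $-\log\omega$. Iterating this along consecutive integers, the set of $n$ with $g(n+1)-g(n) < -\log\omega$ cannot contain two consecutive elements except on a set of density zero. The idea is to turn this into a ``monotonicity on average'' statement. Concretely, I will show that the distribution of the gaps $g(n+1)-g(n)$ is forced to be essentially one-sided, or concentrated near a single value.

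I then invoke the pretentiousness dichotomy for additive functions. In the first case, $g$ is close to $c\log n$ for some $c\in\mb{R}$, meaning $\sum_p |g(p)-c\log p|^2/p<\infty$ in a suitable truncated sense. In the second case, $g$ is far from every $c\log$. There I would use the Matom\"aki--Radziwi\l\l{} analogue for additive functions together with the new concentration bound for the gaps $g(n+1)-g(n)$. Together these should show that $g(n+1)-g(n)$ takes values below any fixed threshold with positive proportion on two consecutive $n$, which produces local maxima with positive upper logarithmic density and a contradiction. In the pretentious case, write $g = c\log + h$ with $h$ small on average. Since $\log(n+1)-\log n \to 0$, the gaps are governed by $h(n+1)-h(n)$. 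If $h$ vanishes on all but at most one prime, then $f(p)=p^c$ for all $p$ except at most one prime $p_0$. Integrality of $f(p)$ forces $c=k\in\mb{N}_0$, which contradicts the hypothesis.

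Otherwise $h(p_1)\ne0$ for some prime $p_1$, and a second prime $p_2$ also has $h(p_2)\neq 0$. Here I would exhibit a positive-log-density set of local maxima directly. Choose $n \equiv p_1 \pmod{p_1^2}$ (or $p_2$, according to the sign of $h$) with $n\pm1$ coprime to the relevant primes. Using the Chinese remainder theorem, and the fact that $h$ is $\ell^2$-small on the remaining primes, $h(n)-h(n\pm1)$ is then dominated by the contribution $h(p_i)$ on a set of positive density. Using two primes with values of possibly opposite sign handles both maxima and minima. The main obstacle is the non-pretentious case: converting the concentration bound for gaps into two consecutive small gaps. I would first try a second-moment argument that bounds the correlation of $1_{g(n+1)-g(n)\ge -\log\omega}$ with its shift.
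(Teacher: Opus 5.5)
Your proposal has a genuine gap. The non-pretentious case, which is the heart of the theorem, is not actually argued, and the mechanism you sketch for it points the wrong way.

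\textbf{The iteration step.} It is not valid. $n\in\mc{L}_f(\omega)$ requires both $g(n)-g(n-1)>\log\omega$ \emph{and} $g(n+1)-g(n)<-\log\omega$. So any stretch on which $g(n+1)-g(n)<\log\omega$ throughout lies entirely in your set $\{n: g(n+1)-g(n)<-\log\omega\}$, yet contains no element of $\mc{L}_f(\omega)$.

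\textbf{The concentration bound.} Proposition \ref{prop:gapConc} is an \emph{upper} bound. When $g$ lacks finite concentration, it says that $g(n+1)-g(n)$ falls into a fixed bounded window only on a set of logarithmic density $0$. It therefore cannot produce ``two consecutive small gaps''. Nor does information about the distribution of a single gap say anything about the joint sign pattern of $g(n)-g(n-1)$ and $g(n+1)-g(n)$ that a local maximum needs.

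\textbf{The fallback.} Correlating $1_{g(n+1)-g(n)\ge-\log\omega}$ with its shift leads, after Fourier expansion, to three-point correlations of $e(-u_1g(n))\,e((u_1-u_2)g(n+1))\,e(u_2g(n+2))$. The product of these three functions is identically $1$, and I am not aware of any available estimate that controls such correlations in this generality. The paper only ever uses two-point bounds.

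\textbf{What the paper does instead.} It never constructs local maxima in this case; it argues indirectly (Theorem \ref{thm:redtoFinConc}). Proposition \ref{prop:gapConc} is used only to show that the ties \eqref{eq:close} are rare. This upgrades $\delta(\mc{L}_f)=0$ to the strict inequality \eqref{eq:nopeaks} off a null set. Lemma \ref{lem:valleys} then shows that $g$ decreases and then increases between consecutive exceptions. Hence the normalised, clipped function $h_x=\phi_K(g_{\lambda_x}/s_x-a_x)$ has total variation $o(x)$ on $[1,x]$ along a sequence $x=x_k$, and is $L^2$-close to its short averages (Lemma \ref{lem:MRclipped}). The Matom\"aki--Radziwi\l\l{} theorem for the additive function $\tilde g_x$, together with the comparison $h_x\approx\tilde g_x-a_x$ (Lemma \ref{lem:5point1}), then gives $\frac{1}{x}\sum_{n\le x}|\tilde g_x(n)-\mu(x)|^2\le C(M/K^2+M^2\log(1/M)+K^2M^2)+o(1)$ (Lemma \ref{lem:UppBd}). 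For suitable $K,M$ this contradicts the Ruzsa-type lower bound $c_0M-C_0K^2M^2$ of Lemma \ref{lem:LowBd}. A structural argument of this kind is the missing idea.

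\textbf{The pretentious case.} Your direct construction is a genuinely different route from the paper's. The paper goes through Theorem \ref{thm:PowBad}, clips the possible bad prime (Lemma \ref{lem:clipLg}), treats the sparse case by Lemma \ref{lem:fequal1}, and treats the dense case by Theorem \ref{thm:consec}, telescoping and K\'atai's Theorem \ref{thm:KataiLog}. Your route can be made to work and would avoid those last ingredients, but not as stated.

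Your $h$ is only $\ell^2$-small on primes beyond a threshold, and $n\pm1$ cannot be made coprime to $2$ and $3$. Since $6\mid n(n^2-1)$, the values $h(2),h(4),h(3)$ enter unavoidably with arbitrary signs and sizes, while $h(p_i)$ may be tiny. So the claim that $h(n)-h(n\pm1)$ is dominated by $h(p_i)$ is false in general.

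The repair is as follows.
\begin{enumerate}
\item Fix exact valuations at $2$, $3$ and one or two designated primes by the case analysis of Lemma \ref{lem:fequal1}. Note the case where $h(p)\le0$ for all $p$ and $h(4)\le0$: there two negative primes must be placed on $n-1$ and $n+1$, not a prime on $n$. This yields a margin $\delta>0$.
\item Only then choose $T=T(\delta)$, and make $n(n^2-1)$ coprime to all other primes in $[5,T]$, as in Lemma \ref{lem:fix23q}.
\item Discard the sparse set of $n$ with some $p^2\mid n(n^2-1)$ for $p>T$, or some $p\mid n(n^2-1)$ with $p>T$ and $|h(p)|>\eta$, where $\eta$ is small in terms of $\delta$.
\item Bound the remaining contribution of primes $>T$ to $h(n+j)$, $j\in\{-1,0,1\}$, by Tur\'an--Kubilius along the progression; the three means agree.
\end{enumerate}

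\textbf{Integrality.} The assertion that integrality forces $c\in\mb{N}_0$ needs proof. Deducing $c\in\mb{Q}$ from $p_1^c,p_2^c\in\mb{N}$ for just two primes is an open case of the four exponentials conjecture. The paper uses three primes and the six exponentials theorem, then concludes by unique factorisation.
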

On the other hand, it is actually quite easy to show that, as formulated, the conjecture is \emph{false} for general multiplicative functions, whenever $\omega > 1$. We do this in Section \ref{sec:2} below.
\begin{prop}[Conjecture \ref{conj:54} is false for all $\omega > 1$]\label{prop:counter}
For each $\omega >1$ there exist multiplicative functions $f_1,f_2: \mb{N} \ra \mb{N}$ such that for both $j = 1,2$ and for each $k \in \mb{N}_0$ there are infinitely many primes $p$ such that $f_j(p) \neq p^k$, yet $|\mc{S}_{f_1}(\omega)| < \infty$ and $|\mc{L}_{f_2}(\omega)| < \infty$. Indeed, there exists $A = A(\omega)$ such that for each prime $p_0$ such functions $f_j$ can be constructed with the following properties: 
\begin{enumerate}[(a)]
\item there are constants $C_1,C_2 > 0$ such that
$$
C_1 n^A \leq f_j(n) \leq C_2 n^A \text{ for all } p_0 \nmid n,
$$
\item we have
$$
\sup_{k \geq 1} f_1(p_0^k)/p_0^{kA} = \infty, \quad \inf_{k \geq 1} f_2(p_0^k)/p_0^{kA} = 0. 
$$
\end{enumerate}
\end{prop}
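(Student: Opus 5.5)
Fix $\omega>1$ and a prime $p_0$. We build $f_j$ from a power function, perturbed on primes in a bounded way and at $p_0$ in an unbounded way. Choose an integer $A\ge1$ so large that $2^A>\omega^2$ (precisely how large is fixed below), and let $h$ be the multiplicative function with $h(p^k)=p^{kA}$ for $p\ne p_0$. To make infinitely many primes violate $f(p)=p^k$ for every $k$, set $f_j(p)=p^A+1$ for infinitely many primes $p\ne p_0$, say a sparse set $\mc{P}$ with $\prod_{p\in\mc{P}}(1+p^{-A})<\infty$, and $f_j(p^k)=p^{kA}$ otherwise away from $p_0$. Then, for $p_0\nmid n$, $n^A\le f_j(n)\le C_2n^A$ with $C_2=\prod_{p\in\mc{P}}(1+p^{-A})$, giving (a) with $C_1=1$. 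Taking $C_2<\omega^{1/2}$ is possible by choosing $\mc{P}$ sparse enough. For every $k$, $f_j(p)\neq p^k$ for all $p\in\mc{P}$ because $p^A<p^A+1<p^{A+1}$.

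At $p_0$, put $f_1(p_0^k)=p_0^{kA}M_k$ with $M_k\to\infty$ rapidly, and $f_2(p_0^k)=\max(1,\lfloor p_0^{kA}/M_k\rfloor)$, which are positive integers satisfying (b). Write $n=p_0^k m$ with $p_0\nmid m$, so that $f_j(n)=f_j(p_0^k)f_j(m)$.

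For $f_1$, suppose $n\in\mc{S}_{f_1}(\omega)$. If $p_0\mid n$, then $n\pm1$ are coprime to $p_0$, and
$$f_1(n)\ge p_0^{kA}M_k m^A\ge M_k n^A\ge n^A,$$
while $f_1(n\pm1)\le C_2(n+1)^A$. The condition $f_1(n)<\omega^{-1}f_1(n\pm1)$ would force $n^A<\omega^{-1}C_2(n+1)^A$, impossible for large $n$ since $C_2<\omega$. If $p_0\nmid n$, one of $n\pm1$ may be divisible by $p_0$ and only increases, so it is harmless; the other neighbour still gives the same contradiction, using $f_1(n)\ge n^A$ and $f_1(n\pm1)\le C_2 (n+1)^A$ (or $\le$ that times $M_k$ is irrelevant). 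The catch is that if both $n\pm1$ are coprime to $p_0$, the bound is as above. If $n+1$ is divisible by $p_0$, use $n-1$. Only one neighbour can be divisible by $p_0$ (when $p_0\ge3$; for $p_0=2$ and $n$ odd both are even, and this needs the comparison with $f_1(n-1)$, $f_1(n+1)$, both at least $n^{A}$ up to constants, so here more care is needed). Hence $\mc{S}_{f_1}(\omega)$ is finite.

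For $f_2$ the argument is symmetric. If $p_0\mid n$, then $f_2(n)\le n^A C_2/M_k\le C_2 n^A$. If $p_0\nmid n$, then $f_2(n)\le C_2n^A$, while some neighbour coprime to $p_0$ has $f_2\ge (n-1)^A$. Being in $\mc{L}_{f_2}(\omega)$ then forces $C_2n^A>\omega(n-1)^A$, false for large $n$. Integrality of $f_2(p_0^k)$ needs $M_k\le p_0^{kA}$, which is compatible with $M_k\to\infty$; the floor only decreases values, which helps.

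The main obstacle is the case $p_0=2$, where for odd $n$ both neighbours carry a power of $2$. For $f_1$ with $n$ odd, both neighbours are large, so being a local minimum is plausible; to block it, compare instead with the even neighbours' odd parts. This is exactly where the parameter $A$ with $2^A>\omega^2$ enters. Say $n-1=2^a u$ and $n+1=2^b v$, with one of $a,b$ equal to $1$, say $a=1$. Then
$$f_1(n-1)\le C_2 M_1 2^A u^A \approx C_2 M_1 n^A,$$
so we choose $M_1=1$ (only $M_k$ for large $k$ need grow) and the argument goes through. The analogous choice $M_1=1$ handles $f_2$. I would verify these boundary cases carefully, possibly choosing $p_0$-adic weights so that exponent $1$ is unperturbed.
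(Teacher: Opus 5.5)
Your construction is correct and is essentially the paper's: $n^A$ times a perturbation lying in $[1,C_2)$ with $C_2<\omega$ off $p_0$, unbounded weights on powers of $p_0$, and the normalisation $M_1=1$ (the paper's $c^{(j)}_0=c^{(j)}_1=1$), which handles $p_0=2$ because for odd $n$ one of $n\pm 1$ is exactly divisible by $2$. The only difference is that you perturb a sparse infinite set of primes rather than every $p^k$ with $p\neq p_0$, so any $A\geq 1$ works and your condition $2^A>\omega^2$ is never actually used.
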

We will show that when $\omega > 1$, the properties (a) and (b) highlighted in the above proposition are in fact \emph{necessary} in order for $\mc{S}_f(\omega)$ and $\mc{L}_f(\omega)$ to have logarithmic density zero. To this end, we introduce the following definition.
\begin{def1}[Powerfully bad primes] 
Given a positive-valued multiplicative function $f$, we say that a prime $p$ is \emph{powerfully bad for $f$} if 
$$
\sup_{k \geq 1} |\log(f(p^k))| = \infty. 
$$
We define $\mc{B}_f$ to denote the set of all powerfully bad primes for $f$. 
\end{def1}
\begin{rem}
We note that when $f$ is completely multiplicative
we have
$$
\mc{B}_f = \{p : f(p) \neq 1\}.
$$
\end{rem}
In the sequel, given $\lambda \in \mb{R}$ and a multiplicative function $f$, write $f_{\lambda}(n) := f(n)/n^{\lambda}$ for all $n$. 
Our main result, which treats the more general class of positive-valued multiplicative functions, is as follows.

\begin{thm}[At most one prime is powerfully bad]\label{thm:PowBad}
Let $f: \mb{N} \ra (0,\infty)$ be a multiplicative function. Let $\omega > 0$, and suppose that $\delta(\mc{S}_f(\omega))\delta(\mc{L}_f(\omega)) = 0$. Then there exists $\lambda \in \mb{R}$ such that $|\mc{B}_{f_{\lambda}}| \leq 1$. Moreover, there exists an integer $P$ with $\Omega(P) \leq 1$ such that
$$
\sup_{(n,P) = 1} |\log(f(n)/n^\lambda)| < \infty.
$$
\end{thm}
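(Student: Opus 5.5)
Pass to the additive function $g(n) := \log f(n)$, which is real-valued. The conditions $n \in \mc{L}_f(\omega)$ and $n \in \mc{S}_f(\omega)$ become
$$
g(n) - g(n\pm 1) > \log \omega \quad\text{and}\quad g(n) - g(n\pm1) < -\log\omega
$$
respectively. Assume without loss of generality that $\delta(\mc{L}_f(\omega)) = 0$; the case $\delta(\mc{S}_f(\omega))=0$ follows by replacing $f$ with $1/f$. Set $c := \log \omega$, so $c$ may be negative. The argument is built on the pretentiousness dichotomy for additive functions. Either $g$ is close to a multiple $\lambda \log n$, meaning that
$$
\sum_p \frac{|g(p)-\lambda\log p|^2 \wedge 1}{p}
$$
and the corresponding higher-prime-power contributions are controlled, or it is not.

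\textbf{Step 1 (non-pretentious case).} Suppose $g$ is far from every $\lambda\log n$. I would invoke the Matom\"aki--Radziwi\l\l{}-type theorem for additive functions together with the concentration bound for the gaps $g(n+1)-g(n)$. The concentration bound gives that, in logarithmic density, $g(n)-g(n-1)$ and $g(n+1)-g(n)$ are not concentrated near any fixed value. More precisely, the aim is to show that the pair $(g(n)-g(n-1),\, g(n)-g(n+1))$ has both coordinates exceeding $c$ on a set of positive logarithmic density. To get this, I would use a Tao-type logarithmic averaging entropy or correlation argument, e.g.\ via the log-averaged Elliott conjecture for two-point correlations. The idea is to compare $n \mapsto g(n)$ with $g(pn)=g(p)+g(n)$ for typical primes $p$, which shifts the pattern. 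Suitably normalizing $g(n)-B(x)$ by its variance $B(x)^2 \to \infty$, one expects the pair of gaps to behave like two asymptotically independent large random quantities. The pattern then has positive density, which is a contradiction. So $g$ is pretentious to some $\lambda\log$.

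\textbf{Step 2 (pretentious case, bounded on primes).} Let $h := g - \lambda \log$, which is additive. Using $\delta(\mc{L})=0$, I would show that $h(p)=0$ for all but finitely many primes, and indeed that $h$ is bounded off prime powers. The mechanism is to localise to residue classes: choose $n \equiv 0 \pmod{q^k}$ while $n\pm1$ avoid $q$. Then $h(n)-h(n\pm1)$ picks up $h(q^k)$ plus a tight, log-concentrated remainder, since the pretentious part has a limiting distribution. If $h(q^k)\to+\infty$ along some $k$, then the pattern $h(n)-h(n\pm1) > c$ occurs on a set of density roughly $\gg q^{-k}$, contradicting $\delta(\mc{L}) = 0$. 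So $\sup_k h(q^k)<\infty$ for all $q$.

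\textbf{Step 3 (at most one bad prime).} If two distinct primes $q_1,q_2$ had $\inf_k h(q_i^k) = -\infty$, use the Chinese remainder theorem to place $n-1 \equiv 0 \pmod{q_1^{k}}$ and $n+1\equiv 0 \pmod{q_2^{k}}$. Then both neighbours are very small, and $n$ lands in $\mc{L}$ with positive density, again a contradiction. If only one prime is bad, take $P$ to be that prime. The remaining primes then give a bounded $h$ on $(n,P)=1$. This needs an additional argument that $\sum_p |h(p)|$-type tails are bounded, not merely square-summable. I would obtain this by the same local-pattern argument applied to many primes simultaneously, using the fact that the limiting distribution of $h$ must have its support bounded above and below.

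The \textbf{main obstacle} is Step 1: converting the non-concentration of a single gap into positive log-density of a joint two-sided pattern. It requires genuinely two-point correlation information, beyond the one-gap concentration bound.
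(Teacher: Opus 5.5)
There is a genuine gap, and it is the one you flag yourself: Step 1 is a heuristic, not an argument. The event $\{g(n)-g(n-1)>c,\ g(n)-g(n+1)>c\}$ is governed by the joint law of $(g(n-1),g(n),g(n+1))$. That is, it is governed by three-point correlations $f_{u_1}(n-1)f_{u_2}(n)f_{u_3}(n+1)$ with $u_1+u_2+u_3=0$. In that regime the product $f_{u_1}f_{u_2}f_{u_3}=f_0$ is trivial, so no Elliott- or Tao--Ter\"av\"ainen-type input gives asymptotic independence. Normalising by $B(x)$ does not help either: infinite concentration allows the fluctuations of $g$ to come from large values on sparse sets of large primes, where no Kubilius-model truncation is available.

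The missing idea is that the paper never produces the two-sided pattern in this case. It argues by contradiction from $\delta(\mc{L}_f(\omega))=0$.
\begin{itemize}
\item The one-gap bound (Proposition \ref{prop:gapConc}, which needs only binary correlations) is used solely to show that near-ties $\max\{g(n-1),g(n+1)\}\le g(n)\le \max\{g(n-1),g(n+1)\}+\Delta$ are log-sparse.
\item Off a log-null set one then has $g(n)<\max\{g(n-1),g(n+1)\}$. This forces $g$ to be decreasing and then increasing between consecutive exceptions (Lemma \ref{lem:valleys}).
\item The normalised, clipped function $h_x=\phi_K(g_{\lambda_x}/s_x-a_x)$ inherits these order relations. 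Telescoping gives $\frac1x\sum_{n\le x}|h_x(n+1)-h_x(n)|\to 0$ along a subsequence, so $h_x$ is $L^2$-close to its short averages.
\item Combining this with the Matom\"aki--Radziwi\l\l{} theorem for the strongly additive $\tilde g_x$ and the approximation $h_x\approx \tilde g_x-a_x$ bounds the variance of $\tilde g_x$ by $O(M/K^2+M^2\log(1/M)+K^2M^2)$.
\item This contradicts Ruzsa's lower bound $c_0M-C_0(KM)^2$ once $K$ is large and $M$ small.
\end{itemize}

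Your Steps 2--3 follow the outline of Section \ref{sec:finConc}: peaks rule out large positive values of $g_\lambda$, and a CRT construction with $n-1$, $n+1$ divisible by coprime moduli rules out two very negative values. As written, however, they have three problems.
\begin{itemize}
\item The intermediate claim that $h(p)=0$ for all but finitely many $p$ is false. The function $f_1$ from Proposition \ref{prop:counter} satisfies the hypothesis, yet $\log(f_1(p)/p^A)=\log(1+p^{-A})\neq 0$ for every $p\neq p_0$. Only boundedness on $(n,P)=1$ holds.
\item Working with prime powers gives at best $\sup_k|h(q^k)|<\infty$ for $q\neq p_0$, which does not imply boundedness on $(n,P)=1$. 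A strongly additive $h$ with $h(p)=1/\log\log p$ is finitely concentrated with $\lambda=0$ and bounded on prime powers, yet unbounded. Nor is there a limiting distribution to appeal to, since $\sum_{|h(p)|\le 1}h(p)/p$ may diverge.
\item In Step 3, $q_1^k\mid n-1$ alone does not pin down the $q_1$-component of $n-1$; exact divisibility is needed.
\end{itemize}
The paper instead runs both local arguments with arbitrary integers: a $q$ with $g_\lambda(q)$ large, and coprime $q_1,q_2$ with $g_\lambda(q_j)$ very negative. It then concludes with Lemma \ref{lem:easyVer}. Uniformity in these growing composite moduli comes from the $W$-trick (Lemma \ref{lem:unifClose}). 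This is applied to linked progressions with the same prime support (e.g.\ $Q_2=Q_1^2$), so that the centring terms $\sum_{p\mid Q}g_\lambda(p)/p$ cancel. Exact divisibility is imposed via congruences such as $b\equiv 1+q_1 \pmod{q_1^2}$.
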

\begin{rem}
In light of the example from the proof of Proposition \ref{prop:counter}, this classification is sharp.
\end{rem}
When $f$ is completely multiplicative, in particular, we obtain the following strong form of Conjecture \ref{conj:54} for all $\omega > 0$ as a consequence of Theorems \ref{thm:PowBad} and \ref{thm:multAtMost1}. 
\begin{thm}[Conjecture \ref{conj:54} holds for completely multiplicative functions] \label{thm:SarkCM}
If $f: \mb{N} \ra \mb{N}$ is a completely multiplicative function such that for each $k \in \mb{N}_0$ there are at least two primes $p_1,p_2$ such that $f(p_j) \neq p_j^k$ for $j = 1,2$, then for each $\omega > 0$, both $\mc{L}_f(\omega)$ and $\mc{S}_f(\omega)$ have positive upper logarithmic density.
\end{thm}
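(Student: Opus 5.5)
Theorem \ref{thm:SarkCM} will follow by combining Theorems \ref{thm:PowBad} and \ref{thm:multAtMost1}, splitting according to the size of $\omega$. For $\omega \in (0,1]$, Theorem \ref{thm:multAtMost1} applies directly: a completely multiplicative $f:\mb{N}\ra\mb{N}$ is in particular multiplicative, and the hypotheses are identical. So the only work is the case $\omega > 1$.

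Fix $\omega > 1$ and suppose, for contradiction, that one of $\mc{L}_f(\omega)$, $\mc{S}_f(\omega)$ has upper logarithmic density zero. Then that set has logarithmic density $0$, so $\delta(\mc{S}_f(\omega))\delta(\mc{L}_f(\omega)) = 0$ holds; the product is meaningful since the zero factor exists, which is how I read the hypothesis of Theorem \ref{thm:PowBad}. Since $f$ takes values in $\mb{N}$, it is positive-valued, so Theorem \ref{thm:PowBad} yields $\lambda \in \mb{R}$ with $|\mc{B}_{f_\lambda}| \leq 1$.

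Next, $f_\lambda(n) = f(n)/n^\lambda$ is again completely multiplicative, so by the remark following the definition of powerfully bad primes,
$$
\mc{B}_{f_\lambda} = \{p : f(p) \neq p^\lambda\}.
$$
Thus $f(p) = p^\lambda$ for all primes except at most one.

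It remains to show that $\lambda \in \mb{N}_0$; this is the only place needing care. Since $f(p) = p^\lambda$ holds for at least two primes $q_1 \neq q_2$, and $f(q_1) \in \mb{N}$ with $f(q_1) = q_1^\lambda$, we get $\lambda = \log f(q_1)/\log q_1 \geq 0$. For the same reason $\lambda = \log f(q_2)/\log q_2$. Writing $f(q_1) = q_1^{\lambda}$ and $f(q_2)=q_2^{\lambda}$ with $\lambda = a/b$ rational would give $f(q_1)^b = q_1^a$, so $f(q_1)$ is a power of $q_1$. Rationality itself comes from the following argument: $f(q_1)=q_1^{s}$ forces $f(q_1)$ to be a power of $q_1$ by unique factorisation. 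Indeed, if $\lambda$ were irrational, then $\log f(q_1)/\log f(q_2) = \log q_1/\log q_2$. By unique factorisation, multiplicative independence of $q_1,q_2$ forces $f(q_1)^{m}=f(q_2)^{l}$ to be impossible unless $m=l=0$. Simpler still, $f(q_1)$ is a positive integer with $\log f(q_1) = \lambda \log q_1$, and $\lambda \log q_2 = \log f(q_2)$. Then $f(q_1)^{\log q_2} = f(q_2)^{\log q_1}$, which suggests a transcendence issue.

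The cleaner route is to take primes $q_1,q_2$ with $f(q_j)=q_j^\lambda$ and use that there are in fact infinitely many such primes (all but one). If $f(q)=q^\lambda \in \mb{N}$ for some prime $q$ with $\lambda \geq 0$, let $f(q) = \prod r^{e_r}$. Taking a prime $r \mid f(q)$ with $r\neq q$ and a second prime $q'$ gives $e_r \log q' \cdot$ (a rational relation among logarithms of distinct primes), which is impossible since logarithms of distinct primes are linearly independent over $\mb{Q}$. Hence $f(q)=q^{k}$ with $k=\lambda \in \mb{N}_0$; more directly, $q^\lambda\in\mb{N}$ for two distinct primes forces $\lambda\in\mb{N}_0$. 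This contradicts the hypothesis that for $k=\lambda$ at least two primes satisfy $f(p)\neq p^k$, since we have shown at most one does. Hence both sets have positive upper logarithmic density.

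I expect this integrality step to be the main obstacle, though it is elementary. The key fact to nail down is that $q_1^\lambda, q_2^\lambda \in \mb{N}$ with $q_1\neq q_2$ forces $\lambda\in\mb{N}_0$. The argument: $q_1^\lambda = n_1$ gives $\lambda=\log n_1/\log q_1$. If $n_1$ had a prime factor other than $q_1$, then $n_1^{\,b}=q_1^{\,a}$ would fail for all integers $a,b$. One still needs rationality of $\lambda$, which I would extract from the pair of equations via $\mb{Q}$-linear independence of prime logarithms applied to $\log n_1\log q_2=\log n_2\log q_1$. If that bilinear relation proves awkward, I would fall back on the "Moreover" part of Theorem \ref{thm:PowBad} (boundedness of $f(n)/n^\lambda$ off $P$) combined with integrality to conclude.
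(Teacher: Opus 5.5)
Your reduction matches the paper. For $\omega\le 1$ you quote Theorem \ref{thm:multAtMost1}. For $\omega>1$, Theorem \ref{thm:PowBad} together with complete multiplicativity gives $f(p)=p^\lambda$ for all but at most one prime, with $\lambda\ge 0$; your use of the remark $\mc{B}_{f_\lambda}=\{p : f(p)\neq p^\lambda\}$ is slightly more direct than the paper's use of the ``Moreover'' clause.

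\textbf{The gap is the rationality of $\lambda$.} You flag the integrality step as the main obstacle, and none of your sketches resolves it.
\begin{itemize}
\item $\mb{Q}$-linear independence of the logarithms of primes says nothing about the relation $\log n_1\log q_2=\log n_2\log q_1$, which is quadratic in logarithms.
\item The sentence about ``a rational relation among logarithms of distinct primes'' is not an argument.
\item The ``key fact'' you propose to nail down is not known. The claim that $q_1^\lambda,q_2^\lambda\in\mb{N}$ for two distinct primes forces $\lambda\in\mb{N}_0$ would follow from the open four exponentials conjecture. It is currently unknown whether $2^t,3^t\in\mb{N}$ implies $t\in\mb{N}_0$. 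So no elementary two-prime argument of the kind you sketch can be expected to work.
\item Your fallback to the ``Moreover'' clause of Theorem \ref{thm:PowBad} adds nothing. For completely multiplicative $f$, boundedness of $f(n)/n^\lambda$ on $(n,P)=1$ is already equivalent to $f(p)=p^\lambda$ for $p\nmid P$, which you have.
\end{itemize}

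\textbf{The missing ingredient is a transcendence theorem applied to three primes.} You do have three primes, since all but at most one prime satisfy $f(p)=p^\lambda$. This is the paper's route. Suppose $\lambda\notin\mb{Q}$. Then $\{1,\lambda\}$ and $\{\log p_1,\log p_2,\log p_3\}$ are each linearly independent over $\mb{Q}$. The six exponentials theorem then says one of $p_1,p_2,p_3,p_1^\lambda,p_2^\lambda,p_3^\lambda$ is transcendental. This contradicts $f(p_j)=p_j^\lambda\in\mb{N}$.

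Once $\lambda=a/b$ is rational, your unique-factorisation step finishes the proof. From $f(p)^b=p^a$ you get $f(p)=p^\nu$, hence $a=\nu b$ and $b=1$. This contradicts the hypothesis at $k=\lambda$.
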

Note that this theorem improves on the conjectured claim in two aspects: the number of exceptional primes has been reduced to $2$, and the infinitude of $\mc{L}_f(\omega)$ and $\mc{S}_f(\omega)$ has been quantified in terms of logarithmic upper density. \\
In the author's opinion, one compelling feature of our main theorems is that despite our incomplete understanding of the theory of 3-point correlations of multiplicative functions (on which there has nevertheless been spectacular progress in the work of Tao, Ter\"{a}v\"{a}inen and their collaborators, see e.g., \cite{TTDuke}, \cite{KMT2}, \cite{MRTTZ}), we are able to give quantitative information about patterns of three additively related values of $f$.  Moreover, the size of the values of $f$ turn out not to pose an obstacle to this objective.
\subsection{Background and proof strategy}
\subsubsection{Initial setup}
For most of this paper we will work more generally with multiplicative functions $f: \mb{N} \ra (0,\infty)$ that take positive real values. We note that this set is invariant under the involution $f \mapsto 1/f$, and that for every $\omega > 0$,
\begin{equation}\label{eq:ftoreciprocal}
\mc{S}_f(\omega) = \mc{L}_{1/f}(\omega).
\end{equation}
Thus, replacing $f$ by $1/f$, we may concern ourselves mainly with the question of when $\delta(\mc{L}_f(\omega)) = 0$, which we shall do in the sequel. \\
The starting point for our method comes from the obvious fact that $g := \log f$ is a real-valued additive function. By Remark \ref{rem:MonoOmega}, we may restrict our attention to $\omega \geq 1$. If we set $\Delta := \log \omega \geq 0$, then $n \in \mc{L}_f(\omega)$ if and only if
\begin{align*}
g(n) > \max\{g(n-1),g(n+1)\} + \Delta.
\end{align*}
We fix this notation for the remainder of the paper. These observations reduce matters to studying a local property of real-valued additive functions. 
\subsubsection{Historical overview}
At this point it is worth giving historical background on rigidity problems for additive functions, which has been a topic of significant interest for a long time. A major influence on this topic was a classical paper of Erd\H{o}s \cite{Erdos}, which gave examples of several local rigidity properties that can only be satisfied by constant multiples of the logarithm, i.e., $g(n) = c \log n$ for $c \in \mb{R}$. Among many others in \cite{Erdos}, he proved two important theorems of this type:
\begin{enumerate}[(i)]
\item if $g(n+1)-g(n) = o(1)$ as $n \ra \infty$ then there is a constant $c\in \mb{R}$ such that $g = c\log$; 
\item if $g(n+1) \geq g(n)$ for all $n$, then there exists $c \in \mb{R}$ such that $g = c\log$.
\end{enumerate}
Various generalisations of these types of questions have also been considered in the literature; see e.g. \cite{Mangerel} for a representative sample of references. \\
In \cite{Erdos}, Erd\H{o}s also conjectured that the following strengthening of (i)-(ii) may be obtained by relaxing the corresponding local properties therein:
\begin{enumerate}[(i')]
\item if $g(n_k+1) - g(n_k) = o_{k \ra \infty}(1)$ along a sequence $(n_k)_k$ of density $1$ then there is a constant $c\in \mb{R}$ such that $g = c\log$; 
\item if $g(n_k+1) \geq g(n_k)$ along a sequence $(n_k)_k$ of density $1$, then there exists $c \in \mb{R}$ such that $g = c\log$.
\end{enumerate}
The first of these conjectures was proven in 1988 by Hildebrand \cite{Hildebrand}. The second of these resisted proof for much longer. \\
Within the last decade especially, after the breakthrough by Matom\"{a}ki and Radziwi\l\l{} on multiplicative functions in short intervals \cite{MR}, it was observed that local properties of multiplicative functions, and therefore also of additive functions, can be studied using suitable estimates for the partial sums of these functions over short intervals. Indeed, in \cite{Mangerel} the author made partial progress on problem (ii') by deriving a new short-interval mean value theorem for general additive functions from the Matom\"{a}ki-Radziwi\l\l{} theorem. Recently, an AI-assisted proof of problem (ii') was obtained \cite{ErdAI} that relied crucially on this work. 
\subsubsection{The strategy of \cite{ErdAI}}
Our paper is inspired by the method in \cite{ErdAI}, so we take a moment to discuss the main innovations of that paper. The main lemmas to be used in our paper will be stated and proved in the appendix to this paper (the proofs given are our own).\\
To explain its strategy, we recall the following definition.
\begin{def1}
We say that an additive function $g: \mb{N} \ra \mb{R}$ has \emph{finite concentration} if 
$$
\limsup_{x \ra \infty} \frac{1}{x}\sup_{a \in \mb{R}} |\{n \leq x : g(n)\in [a-1/2,a+1/2]\}| > 0.
$$
Erd\H{o}s showed (see \cite[Thm. V]{Erdos}) that this is equivalent to the statement that there exists\footnote{In the sequel we shall sometimes refer to such a parameter $\lambda$ as a \emph{minimiser} for $g$.} $\lambda \in \mb{R}$ with
$$
\sum_p \frac{\min\{(g(p)-\lambda\log p)^2,1\}}{p} < \infty.
$$
\end{def1}
\noindent The property of finite concentration for additive functions is intimately related to the property of ``pretentiousness'' in multiplicative number theory. In particular, if the finite concentration of $g$ (in the second equivalent form above) is witnessed by a parameter $\lambda$, then the multiplicative function $e^{ig(n)}$, which takes values on the unit circle in $\mb{C}$, will be $n^{i\lambda}$-pretentious, i.e.,
$$
\sum_p \frac{1-\text{Re}(e^{ig(p)}p^{-i\lambda})}{p} < \infty.
$$
This makes the property of finite concentration an appropriate stand-in for pretentiousness within the scope of rigidity problems for additive functions.\\ 
The argument in \cite{ErdAI} split into two parts:
\begin{enumerate}[(a)]
\item showing that if $g(n+1) \geq g(n)$ holds on a density $1$ subset of $\mb{N}$ then $g$ has finite concentration;
\item showing that if $g$ has finite concentration and $g(n+1) \geq g(n)$ on a density $1$ set, then $g = c\log$.
\end{enumerate}
The strategy of \cite{ErdAI} was to first normalise and centre $g$, i.e., replace $g(n)$ by $\tilde{g}_X(n) := (g(n) - \lambda_X \log n)/s_X$, so that at the scale $X$, the sum
$$
\sum_{p \leq X} \frac{\min\{\tilde{g}_X(p)^2,1\}}{p} = \frac{1}{s_X^2}\sum_{p \leq X} \frac{\min\{(g(p) - \lambda_X \log p)^2,s_X^2\}}{p}
$$
could be dynamically controlled in terms of $X$. Choosing $\lambda_X$ and $s_X$ so that this expression takes a fixed value as $X$ varies, one enforces a finite concentration property to hold. However, this does not prevent $\tilde{g}_X$ from taking large prime values, one of the obstructions highlighted in \cite{Mangerel}. \\
To deal with this issue, \cite{ErdAI} introduced the idea of ``clipping'' $\tilde{g}_X$, which is to say replacing $\tilde{g}_X$ by a \emph{uniformly bounded} variant $G_X$, that satisfied the same order relations as $g$, i.e., if $g(n+1) \geq g(n)$ then $G_X(n+1) \geq G_X(n)$. Unfortunately, $G_X$ itself is not additive, which must be overcome by \emph{approximating} it by an additive function that is clipped at prime values. Nevertheless, with some care, the monotonicity properties of $G_X$ can be exploited (see the analogous Lemma \ref{lem:UppBd} in this paper for further details). Note that this step required the Matom\"aki-Radziwi\l\l{} type theorem for additive functions from \cite{Mangerel} as a crucial input.
\subsubsection{Proving Theorem \ref{thm:PowBad}}
The proof of Theorem \ref{thm:PowBad} similarly splits into two steps as above. We emphasise that  studying the sets $\mc{L}_f(\omega)$ and $\mc{S}_f(\omega)$ using the ``normalisation and clipping'' scheme just described requires a non-trivial amount of work. For example, in Section \ref{sec:conc} below we prove a new result, Proposition \ref{prop:gapConc}, which allows us to show that if $g$ does not have finite concentration then the event
\begin{equation}\label{eq:close}
\max\{g(n-1),g(n+1)\} \leq g(n) \leq \max\{g(n-1),g(n+1)\} + \Delta
\end{equation}
has logarithmic density $0$, for any $\Delta \geq 0$. We prove this by adapting a method of Ruzsa \cite{Ruzsa}, who studied the clustering of the values $g(n)$, rather than of its differences $g(n) - g(n+1)$, in bounded intervals. A central role is played by a recent result of Tao and Ter\"{a}v\"{a}inen, \cite[Thm. 3.1(ii)]{TT}, building on the breakthrough work of Pilatte \cite{Pil}, which gives a quantitatively strong upper bound for logarithmically-averaged binary correlations of $1$-bounded multiplicative functions. This is applied (after Fourier inversion) to the correlations
$$
\frac{1}{\log x} \sum_{n \leq x} \frac{e^{iug(n)}e^{-iug(n+1)}}{n}, \quad |u| \leq 1,
$$
and a more complicated variant of a ``patching'' argument due to Ruzsa is applied to make these $u$-dependent estimates compatible with one another. See Section \ref{sec:conc} for the details. \\
Assuming that $\delta(\mc{L}_f(\omega)) = 0$, this actually forces the stronger constraint
\begin{equation}\label{eq:nopeaks}
g(n) < \max\{g(n-1),g(n+1)\}
\end{equation}
to hold for all $n$ outside of a set with logarithmic density $0$, to which we apply a variant of the \cite{ErdAI} argument. The main point is that from \eqref{eq:nopeaks} we readily derive in Lemma \ref{lem:valleys} that between the points of the sparse set of exceptions mentioned, $g(n)$ can only have a single local minimum, thereby allowing us to use telescoping arguments based on monotonicity on either side of this minimum.  \\ 
In contrast to the short deduction of \cite{ErdAI} in the finite concentration case, the corresponding argument in this paper is more substantial, and is given in Section \ref{sec:finConc}. In order to prove Theorem \ref{thm:PowBad}, we show that whenever $g$ has finite concentration witnessed by a parameter $\lambda$, then we can construct a positive density subset of $\mc{L}_f(\omega)$ as long as $f_{\lambda}$ has at least two distinct powerfully bad primes. Key to this is the observation that 
by ``$W$-tricking'' the centred function $g_{\lambda}$, we can force it to concentrate very close to its mean. While this is now a natural strategy in the analysis of pretentious multiplicative functions (as e.g. in \cite{Klu} and \cite{KMRig}), in the present context of additive functions it is apparently novel. More precisely,  we show the following. Let $\e > 0$ be small and let $T$ be chosen large enough in terms of $\e$ so that
$$
\sum_{\ss{p > T \\ |g_{\lambda}(p)| > 1}} \frac{1}{p} + \sum_{\ss{p^k > T \\ k \geq 2}} \frac{1}{p^k} < \e.
$$
We set $W$ to be the product of all primes $p \leq T$ with $|g_{\lambda}(p)| > 1$, and consider affine linear forms $Q_1n + a_1$ and $Q_2n + a_2$ such that $(a_j,Q_j) = 1$ and $W|Q_j$ for $j = 1,2$, where $Q_1,Q_2$ are allowed to grow slowly. Then
$$
|g_{\lambda}(Q_1n+a_1) - g_{\lambda}(Q_2n+a_2)|
$$
is bounded uniformly in terms of $Q_1,Q_2$ as above, on a positive density set. This allows us, for example, to show that if $\sup_{q \in \mb{N}} g_{\lambda}(q) = \infty$ (i.e., $g_{\lambda}$ takes unbounded \emph{positive} values) then with a suitable choice of $q$ and $m = Wq n + 1$, we have
$$
|g_{\lambda}(m) - g_{\lambda}(Wqm \pm 1)| \ll 1 
$$
on some positive density subset of $n$. On the other hand, $q$ can be chosen so that
$$
g_{\lambda}(qWm) = g_{\lambda}(qW) + g_{\lambda}(m) \geq g_\lambda(m) + C,
$$
where $C$ is chosen large (in terms of $\Delta$, say), which creates some separation from the neighbouring values $g_\lambda(Wqm\pm 1)$. (Note that adding back $\lambda \log(Wqm)$ on both sides only changes $C$ by $o(1)$ as $m \ra \infty$.) \\
In the case that $\sup_{q \in \mb{N}} g_{\lambda}(q) < \infty$ (and thus $g_{\lambda}(q)$ tends to $-\infty$ on a subsequence of $q$ instead) we prove instead, using a similar but more complicated $W$-tricking strategy, that there exists $C > 0$ such that for any pair of coprime positive integers $q_1,q_2$, 
\begin{equation}\label{eq:minPS}
\min\{|g_{\lambda}(q_1)|,|g_{\lambda}(q_2)|\} \leq C.
\end{equation}
More precisely, assuming the failure of \eqref{eq:minPS}, we produce a positive density set of integers $n$ where $g(n-1)$ and $g(n+1)$ are simultaneously very small, by enforcing divisibility conditions $n \equiv 1 \pmod{q_1}$, $n \equiv -1 \pmod{q_2}$ for $q_1,q_2$ coprime integers for which $g_{\lambda}(q_j)$ is very negative. As it is more technical, we omit a detailed exposition here. In any case, it is easy to show (see Lemma \ref{lem:easyVer}) that the condition \eqref{eq:minPS} implies that $g_{\lambda}$ has at most one powerfully bad prime, and that it is uniformly bounded on positive integers coprime to some $P$ with $\Omega(P) \leq 1$, which is the desired conclusion.
 
\subsubsection{Deducing Theorems \ref{thm:SarkCM} and \ref{thm:multAtMost1}}
We carry out the deductions of Theorems \ref{thm:SarkCM} and \ref{thm:multAtMost1} in Section \ref{sec:apps}. By Theorem \ref{thm:PowBad} there can only be at most one prime $p$ for which $(g_{\lambda}(p^k))_k$ is unbounded. Theorem \ref{thm:SarkCM} follows immediately from this, because if $f$ is completely multiplicative then $g_{\lambda}$ must be completely additive, and the fact that $g_{\lambda}(p^k) = k g_{\lambda}(p)$ is uniformly bounded in $k$, for $p \neq p_0$, forces $g(p) = \lambda \log p$ for all $p \neq p_0$. There remains the issue of showing that $\lambda$ is a non-negative integer whenever $f$ takes positive integer values; this is a straightforward deduction from e.g., the six exponentials theorem from transcendence theory (see \cite[Sec. 2.1]{Lang}). \\
The proof of Theorem \ref{thm:multAtMost1} is a bit more difficult. As a key ingredient we use an old result of K\'{a}tai \cite{Katai78}, which shows that if
\begin{equation}\label{eq:KatLog}
\liminf_{X \ra \infty} \frac{1}{\log X} \sum_{n \leq X} \frac{|g(n+1)-g(n)|}{n} = 0
\end{equation}
then $g = c \log$. To make use of this we use ``clipping'' in a different way to control the influence of the values $g_{\lambda}(p_0^k)$ on the patterns in $\mc{L}_f$, in the potential event that $\mc{B}_{f_{\lambda}} = \{p_0\}$. In this manner we construct a new additive function $h$ for which $h_{\lambda}$ is uniformly bounded on \emph{all} of $\mb{N}$, and such that $\mc{L}_{h} \subseteq \mc{L}_f$ (noting that here, $\mc{L}_f = \mc{L}_g$). This implies that
$$
h(n) \leq \max\{h(n-1),h(n+1)\}
$$
except on a set of logarithmic density zero, and (after a slightly tedious reduction to the case that the set of $p$ with $f(p) \neq 1$ is sufficiently dense, see Lemma \ref{lem:fequal1}) we upgrade this to the claim that
$$
h(n) < \max\{h(n-1),h(n+1)\}
$$
(cf. \eqref{eq:nopeaks}) outside of a log density $0$ set, using a result\footnote{Here, the finite concentration of $h$ is not an obstruction, unlike in the analysis of \eqref{eq:close} above.} of the author \cite{MangerelEq}. Together with the uniform boundedness of $h_{\lambda}$, we again use a telescoping argument to verify that \eqref{eq:KatLog} holds. \\
Consequently, this shows that $h = c'\log$ for some $c' \in \mb{R}$, and yields the desired conclusion since $g(p) = h(p)$, except for at most one prime $p$.
\section{Existence of counterexamples to Conjecture \ref{conj:54}} \label{sec:2}
In this section we will prove Proposition \ref{prop:counter}, which shows that when $\omega > 1$, Conjecture \ref{conj:54} admits counterexamples. 
\begin{proof}[Proof of Proposition \ref{prop:counter}]
Let $A = A(\omega) \geq 2$ be the minimum positive integer such that
$$
\prod_{p^k, \ k \geq 1} \left(1+\frac{1}{p^{kA}}\right) < \omega.
$$
Fix a prime $p_0$. For every prime power $p^k$, $p \neq p_0$, we define
$$
f_j(p^k) = p^{kA} + 1, \quad j = 1,2.
$$
Also, define $f_j(p_0^k) := c_k^{(j)}p_0^{kA}$, where for $k \geq 1$ set
$$
c_k^{(j)} := p_0^{A(-1)^{j-1}\lfloor \log k\rfloor}.
$$ 
Finally, extend $f$ to a multiplicative function. Since $\lfloor \log k\rfloor \leq k$ for all $k \geq 1$, $f$ clearly takes positive integer values. It is obviously true that for each $j = 1,2$ and any $k \geq 0$, $f_j(p) \neq p^k$ for all $p \neq p_0$, since for such primes $p \nmid f_j(p)$. \\
Writing $\nu_{p_0}(m)$ to denote the maximal $k$ such that $p_0^k | m$ and setting $c_0^{(j)} := 1$ for both $j = 1,2$, we find that
\begin{equation}\label{eq:fgenForm}
f_j(n) = n^Ac_{\nu_{p_0}(n)}^{(j)}\prod_{\ss{p^k || n \\ p \neq p_0}} \left(1+\frac{1}{p^{kA}}\right).
\end{equation}
Obviously when $p_0\nmid n$ we have
$$
1 \leq f_j(n)/n^A = \prod_{\ss{p^k || n}} \left(1+\frac{1}{p^{kA}}\right) \leq \prod_{p^k, k \geq 1} \left(1+\frac{1}{p^{kA}}\right) < \omega, 
$$
which verifies (a). Moreover, we have $c_k^{(j)} = f_j(p_0^k)/p_0^{Ak}$, which tends to $\infty$ as $k \ra \infty$ if $j =1$, and tends to $0$ as $k \ra \infty$ if $j = 2$, 
verifying (b). \\
Next, we show that $|\mc{S}_{f_1}(\omega)| < \infty$. Let us choose $n_0(\omega)$ minimally so that for $n \geq n_0(\omega)$, 
$$
\left(1+\frac{1}{n}\right)^{A} \prod_{p^k, \ k \geq 1}\left(1+\frac{1}{p^{kA}}\right) < \omega.
$$
Using \eqref{eq:fgenForm}, we see that
\begin{align*}
\min\{f_1(n - 1), f_1(n+1)\} &= \min_{\e = \pm 1} (n+\e)^A c_{\nu_{p_0}(n+\e)}^{(1)} \prod_{p^k||(n+\e)} \left(1+\frac{1}{p^{Ak}}\right) \\
&\leq n^A \left(1+\frac{1}{n}\right)^A \prod_{p^k, \ k \geq 1} \left(1+\frac{1}{p^{Ak}}\right) \min\{c_{\nu_{p_0}(n-1)}^{(1)}, c_{\nu_{p_0}(n+1)}^{(1)}\} \\
&< \omega f_1(n)\min\left\{\frac{c_{\nu_{p_0}(n-1)}^{(1)}}{c_{\nu_{p_0}(n)}^{(1)}}, \frac{c_{\nu_{p_0}(n+1)}^{(1)}}{c_{\nu_{p_0}(n)}^{(1)}}\right\}.
\end{align*}
Observe that either $p_0|n$, in which case $c_{\nu_{p_0}(n)}^{(1)} \geq 1$ whilst $c_{\nu_{p_0}(n\pm 1)}^{(1)} = c_0^{(1)} = 1$, or else $p_0 \nmid n$, in which case $0 \leq \min\{\nu_{p_0}(n-1),\nu_{p_0}(n+1)\} \leq 1$. Because $c^{(1)}_0 = c^{(1)}_1 = 1$ it follows that the bracketed expression is $\leq 1$, and thus
$$
\min\{f_1(n-1),f_1(n+1)\} < \omega f_1(n)
$$
for all $n \geq n_0(\omega)$. We conclude that $|\mc{S}_{f_1}(\omega)| < \infty$. \\
The proof that $|\mc{L}_{f_2}(\omega)| < \infty$ is similar. In this case, note that $c_{\nu_{p_0}(n)}^{(2)} \leq 1$ for all $n$, and that as $c_0^{(2)} = c_1^{(2)} = 1$, 
$$
\max\left\{\frac{c^{(2)}_{\nu_{p_0}(n-1)}}{c^{(2)}_{\nu_{p_0}(n)}}, \frac{c^{(2)}_{\nu_{p_0}(n+1)}}{c^{(2)}_{\nu_{p_0}(n)}}\right\} \geq 1. 
$$
Let $n_1(\omega)$ be chosen minimally so that if $n \geq n_1(\omega)$ then
$$
\left(1-\frac{1}{n}\right)^{-A} \prod_{p^k,k \geq 1} \left(1+\frac{1}{p^{kA}}\right) < \omega.
$$
Then we deduce from \eqref{eq:fgenForm} that for all $n \geq n_1(\omega)$,
\begin{align*}
\max\{f_2(n-1),f_2(n+1)\} \geq n^A c^{(2)}_{\nu_{p_0}(n)} &\cdot \prod_{\ss{p^k || n \\ p \neq p_0}}\left(1+\frac{1}{p^{kA}}\right) \cdot \left(1-\frac{1}{n}\right)^A \prod_{p^k,k \geq 1} \left(1+\frac{1}{p^{kA}}\right)^{-1} \\
&\cdot \max\left\{\frac{c^{(2)}_{\nu_{p_0}(n-1)}}{c^{(2)}_{\nu_{p_0}(n)}}, \frac{c^{(2)}_{\nu_{p_0}(n+1)}}{c^{(2)}_{\nu_{p_0}(n)}}\right\} > \frac{1}{\omega} f_2(n),
\end{align*}
whence the claim follows.
\end{proof}

\section{Reducing to the case that $g$ has finite concentration} \label{sec:RedtoFinConc}
The main result of this section is the following reduction, inspired by the strategy of \cite{ErdAI}.
\begin{thm}[Reducing to finite concentration] \label{thm:redtoFinConc}
Let $f: \mb{N} \ra (0,\infty)$ be a multiplicative function. Suppose that there is $\omega > 0$ such that $\delta(\mc{L}_f(\omega))\delta(\mc{S}_f(\omega)) = 0$. Set $g:= \log f$. Then $g$ has finite concentration, i.e., there exists $\lambda \in \mb{R}$ such that
$$
\sum_p \frac{\min\{1,(g(p)-\lambda \log p)^2\}}{p} < \infty.
$$
\end{thm}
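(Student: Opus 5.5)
By \eqref{eq:ftoreciprocal} and Remark \ref{rem:MonoOmega}, I may replace $f$ by $1/f$ if needed and take $\omega\ge 1$. So the hypothesis becomes $\delta(\mc{L}_f(\omega))=0$, with $\Delta=\log\omega\ge 0$. I argue by contradiction: assume $g$ does \emph{not} have finite concentration.

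\textbf{Step 1 (removing the $\Delta$-window).} First I would show that the event \eqref{eq:close} has logarithmic density $0$. This is the role of the gap-concentration bound, Proposition \ref{prop:gapConc} of Section \ref{sec:conc}. Its input is the Tao--Ter\"av\"ainen bound on logarithmic binary correlations of $e^{iug(n)}e^{-iug(n+1)}$, which are small uniformly for $|u|\le 1$ exactly when $e^{iug}$ is non-pretentious. Ruzsa-type patching then controls the concentration of $g(n+1)-g(n)$, and likewise of $g(n)-g(n\pm 1)$, in intervals of length $\Delta+1$. Covering the window $[0,\Delta]$ by finitely many unit intervals, \eqref{eq:close} occurs on a log density $0$ set. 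Combined with $\delta(\mc{L}_f(\omega))=0$, this gives the strict inequality \eqref{eq:nopeaks}, $g(n)<\max\{g(n-1),g(n+1)\}$, outside a set $\mc{E}$ of logarithmic density $0$.

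\textbf{Step 2 (structure).} Next I would use Lemma \ref{lem:valleys}. Between consecutive elements of $\mc{E}$, the sequence $g$ is decreasing and then increasing, with a single valley. This lets me dominate increments by telescoping. On each maximal block $(e_i,e_{i+1})$, the sum of $|g(n+1)-g(n)|$ equals the total descent plus the total ascent. This is at most a bounded multiple of the values of $g$ near the endpoints $e_i,e_{i+1}$, minus the minimum.

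\textbf{Step 3 (normalisation and clipping).} Here I would follow \cite{ErdAI}. For each scale $X$, choose $\lambda_X$ and $s_X$ so that
$$\frac{1}{s_X^2}\sum_{p\le X}\frac{\min\{(g(p)-\lambda_X\log p)^2,s_X^2\}}{p}=1.$$
Failure of finite concentration forces $s_X\to\infty$. Set $\tilde g_X=(g-\lambda_X\log)/s_X$. Then replace it by a clipped, bounded monotone transform $G_X$ of $g$. Since $\lambda_X\log n$ varies by only $O(|\lambda_X|/n)$ between neighbours, after dividing by $s_X$ the order relations of \eqref{eq:nopeaks} are essentially preserved for $G_X$. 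The clipped transform $G_X$ is then approximated by an additive function with clipped prime values.

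The Matom\"aki--Radziwi\l\l{} type theorem for additive functions from \cite{Mangerel} (Lemma \ref{lem:UppBd}) gives that on almost all short intervals, $G_X$ is close to its mean. Combined with the telescoping of Step 2, this should give
$$\frac{1}{\log X}\sum_{n\le X}\frac{|G_X(n+1)-G_X(n)|}{n}=o(1).$$
On the other hand, the normalisation forces the variance-type quantity above to equal $1$. A Tur\'an--Kubilius lower bound for the differences (using $p\,\|\,n$ versus $p\nmid n+1$) then shows that this average is $\gg 1$, which is a contradiction.

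The main obstacle is Step 1, specifically making the $u$-dependent correlation estimates uniform to obtain density zero for \eqref{eq:close}, not just smallness. Step 3 is the second main difficulty: controlling the error from clipping and from $\mc{E}$ uniformly in $X$. I would handle $\mc{E}$ by noting that a log density $0$ set of endpoints contributes $o(1)$, because the clipped values are bounded.
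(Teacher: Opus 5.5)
Your Steps 1 and 2 are the paper's Corollary \ref{cor:clusters} and Lemma \ref{lem:valleys}. You should also invoke Lemma \ref{lem:Vfinite}: it upgrades ``no finite concentration'' for each fixed $\lambda$ to $U_g(x)\ge V_g(x)\to\infty$, which is the hypothesis Proposition \ref{prop:gapConc} actually uses. The genuine gap is at the end of Step 3.

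The bound $\frac{1}{\log X}\sum_{n\le X}|G_X(n+1)-G_X(n)|/n=o(1)$ follows from telescoping alone. It uses only boundedness of the clipped function and sparsity of $\mc{E}$, with no Matom\"aki--Radziwi\l\l{} input. Your contradiction therefore rests entirely on the claim that this same increment average is $\gg 1$. There is no Tur\'an--Kubilius-type lower bound for \emph{differences} that delivers this.

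The orthogonality ``$p\,\|\,n \Rightarrow p\nmid n+1$'' decouples $n$ and $n+1$ only at small primes. The normalised function may carry essentially all its mass on primes in $(X^{\e},X]$. There, keeping $\mathrm{Cov}(h(n),h(n+1))$ bounded away from $\mathrm{Var}(h)$ is a genuine binary-correlation problem. The $\log$-direction shows what any such bound must exclude: $h(p)=\log p/\log X$ has $\sum_{p\le X}h(p)^2/p\asymp 1$, yet $h(n+1)-h(n)=O(1/\log X)$ for typical $n$. Lower bounds for increments that rule this out are deep theorems (K\'atai, Hildebrand, Elliott), and they concern \emph{additive} functions, whereas $G_X$ is not additive. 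If such a bound were available, the MR step would be superfluous, which signals that this step carries the whole difficulty.

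The paper never lower-bounds differences. Instead it runs the following chain.
\begin{enumerate}[(i)]
\item Small increments give $h_x\approx\mc{A}_Hh_x$ in $L^2$ (Lemma \ref{lem:MRclipped}).
\item The clipped $h_x$ is replaced by the additive $\tilde g_x-a_x$ at $L^2$ cost $O(M/K^2+M^2\log(1/M)+(KM)^2)$ (Lemma \ref{lem:5point1}).
\item MR for additive functions gives $\mc{A}_H\tilde g_x\approx\mu(x)$ (Lemma \ref{lem:2point1}), so the \emph{variance} $\frac1x\sum_{n\le x}|\tilde g_x(n)-\mu(x)|^2$ is small.
\item This contradicts Ruzsa's variance lower bound $c_0M-C_0(KM)^2$ (Lemma \ref{lem:LowBd}). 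There the $\theta\log p$ correction, together with the orthogonality relation of Lemma \ref{lem:4point1}(b), neutralises exactly the $\log$-obstruction above.
\end{enumerate}
This is also why the normalisation must be to a small $M$ rather than to $1$: one takes $K$ large and then $K^2M$ small. At $M=1$ the exceptional primes $\mc{T}_X$ can have reciprocal sum of order $1$. The additive-approximation error bound then becomes $\asymp K^2$ and swamps the available lower bound $c_0M$.
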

To prove this theorem we will need a few ingredients. We emphasise here that the results of this section \emph{do not} require that $f(n) \in \mb{N}$, which will only be important later on. \\
For the rest of this section we fix $f$ and $\omega > 0$ and assume that the hypotheses of Theorem \ref{thm:redtoFinConc} hold. In light of Remark \ref{rem:MonoOmega} we may assume that $\omega \geq 1$, and we shall set $\Delta := \log \omega \geq 0$. Also, given \eqref{eq:ftoreciprocal}, by replacing $f$ with $1/f$ if needed we may assume that $\delta(\mc{L}_f(\omega)) = 0$. For convenience, in the sequel given $\lambda \in \mb{R}$ we write $g_{\lambda}(n) := g(n)-\lambda \log n$, and
$$
U_g(x;\lambda) := \sum_{p \leq x} \frac{\min\{1, g_{\lambda}(p)^2\}}{p}, \quad U_g(x) := \min_{|\lambda| \leq x^2} U_g(x;\lambda), \quad V_g(x) := \inf_{\lambda \in \mb{R}} U_g(x;\lambda).
$$
Obviously $U_g(x) \geq V_g(x)$ for all $x$, which we will need later. \\
Given $d \in \mb{N}$ and $h > 0$ we also define the \emph{logarithmic length $d$ gap concentration function} by
$$
\mc{Q}_g(x;d,h) := \frac{1}{\log x} \sup_{a \in \mb{R}} \sum_{\ss{\sqrt{x} < n \leq x \\ |g(n+d)-g(n)-a| \leq h/2}}\frac{1}{n}.
$$
In Section \ref{sec:conc} we will prove the following estimate.
\begin{prop}[A Ruzsa-type concentration estimate for gaps] \label{prop:gapConc}
Let $g$ be a real-valued additive function, let $d \in \mb{N}$ and let $h > 0$. Then
$$
\mc{Q}_g(x;d,h) \ll_{d,h} \frac{1}{\sqrt{U_g(x)}} + \left(\frac{\log\log \log x}{\log\log x}\right)^{1/2}.
$$
\end{prop}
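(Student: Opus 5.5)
We follow Ruzsa's proof of the Erdős concentration bound, with the values $g(n)$ replaced by the differences $g(n+d)-g(n)$. The two inputs are Fourier analysis in $u$ and the Tao--Teräväinen bound for logarithmic binary correlations.

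\emph{Step 1 (Fourier majorant).} Fix a nonnegative even function $K$ with $K \geq 1$ on $[-h/2,h/2]$ and with $\hat K$ supported in $[-1,1]$; a Fejér-type kernel dilated by $h$ will do. Then, uniformly in $a$,
$$
\mc{Q}_g(x;d,h) \ll_h \frac{1}{\log x}\int_{-1}^{1} |\hat K(u)|\, \Big|\sum_{\sqrt{x}<n\leq x} \frac{e^{iu(g(n+d)-g(n))}}{n}\Big|\,du .
$$
We now bound the inner correlation. The functions $F_u(n) := e^{iug(n)}$ are $1$-bounded and multiplicative. By \cite[Thm. 3.1(ii)]{TT}, building on Pilatte, a logarithmically averaged correlation $\frac{1}{\log x}\sum \bar F_u(n) F_u(n+d)/n$ is at most about
$$
\exp\bigl(-c\, \min_{|t|\leq x^{O(1)}} \mb{D}(F_u, n^{it}\chi;x)^2\bigr) + O_d\Bigl(\Bigl(\frac{\log\log\log x}{\log\log x}\Bigr)^{c'}\Bigr),
$$
where $\chi$ ranges over characters of bounded conductor. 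With a Pilatte-strength input the second term can be taken as the power $1/2$ that appears in the statement; if that strength is not available, we accept a weaker error term at this point. Restricting to $n>\sqrt{x}$ costs nothing, since it only shortens the logarithmic sum by a factor $1/2$.

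\emph{Step 2 (distance in terms of $U_g$).} We relate $\mb{D}(F_u,n^{it}\chi;x)^2 = \sum_{p\leq x}\bigl(1-\text{Re}(e^{iug(p)}\bar\chi(p)p^{-it})\bigr)/p$ to $U_g(x)$. Write $t = u\lambda$. Where $|ug_\lambda(p)| \leq \pi$, we have $1-\cos(ug_\lambda(p)) \gg u^2\min\{1,g_\lambda(p)^2\}$. The characters $\chi$ of bounded conductor are handled as in Ruzsa's argument: they shift $g_\lambda(p)$ by multiples of $2\pi/(u\cdot\mathrm{ord}\chi)$ along residue classes, and one absorbs this with a bounded loss. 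Since $|t|\leq x^{O(1)}$, we get $|\lambda|\ll x^{O(1)}/|u|$, so for $|u|\geq x^{-1}$ this lies in the range $|\lambda|\leq x^2$ allowed in the definition of $U_g(x)$.

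Here is the main obstacle. The pointwise estimate fails for primes where $|ug_\lambda(p)|$ is near a nonzero multiple of $2\pi$: for those primes $F_u(p)$ looks pretentious although $g_\lambda(p)$ is large. This is exactly the issue Ruzsa's ``patching'' lemma addresses, so we import it. Suppose that for two frequencies $u_1, u_2$ the functions $F_{u_j}$ pretend to be $n^{it_j}$ with distance at most $D$. Then the same holds for $F_{u_1-u_2}$ and $n^{i(t_1-t_2)}$, with distance $\leq 2D$, by the triangle inequality for $\mb{D}$. Consequently the set $E_D$ of $u\in[-1,1]$ where the distance is $\leq D$ is approximately closed under differences. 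A covering and pigeonhole argument then shows: if $E_D$ has measure $\gg \eta$, it contains a nonzero $u_0$ with $|u_0|\ll\eta$, and therefore
$$
\eta^2 U_g(x;t_0/u_0) \ll \mb{D}(F_{u_0},n^{it_0}\chi_0;x)^2 \ll D^2 .
$$
Choosing $D^2 \asymp \log(\cdot)$ with $\eta \asymp U_g(x)^{-1/2}$ shows $E_D$ has measure $\ll U_g(x)^{-1/2}$. Off $E_D$ the correlation is $\ll e^{-cD^2}$, plus the error term from Step 1.

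\emph{Step 3 (assembly).} Split the $u$-integral into three parts. Small $|u|\leq U_g(x)^{-1/2}$ contributes this length times the trivial bound $1$. The exceptional set $E_D$ contributes its measure, $\ll U_g(x)^{-1/2}$. The rest contributes $\ll e^{-cD^2} + \bigl(\log\log\log x/\log\log x\bigr)^{1/2}$. Optimising $D$ gives the stated bound
$$
\mc{Q}_g(x;d,h) \ll_{d,h} \frac{1}{\sqrt{U_g(x)}} + \left(\frac{\log\log \log x}{\log\log x}\right)^{1/2}.
$$
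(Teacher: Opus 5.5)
Your overall architecture matches the paper: a Fourier majorant, then Tao--Ter\"av\"ainen, then a bound $\lambda(E_D)\ll D/\sqrt{U_g(x)}$ for the set of pretentious frequencies. That target is the right one; it is the paper's $\lambda(Y_k)\ll k/\sqrt{U_g(X)}$. The gap is in Step 2: the mechanism you give for this bound fails, precisely at the obstacle you flag.

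From $\lambda(E_D)\gg\eta$ you extract only a single good frequency $u_0$. (Also, to deduce $\eta^2U_g\ll\mb{D}^2$ you would need $|u_0|\gg\eta$, not $|u_0|\ll\eta$.) No single frequency controls $U_g$. If $g(p)=2\pi/u_0$ for every $p$, then $F_{u_0}(p)=1$ for all $p$, so $\mb{D}(F_{u_0},1;x)=0$, yet $U_g(x)\gg\log\log x$. Passing to multiples $ju_0\in E_{2jD}$ does not help, since $1-\cos(ju_0\alpha)$ still vanishes when $u_0\alpha\in 2\pi\mb{Z}$.

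Your claim that characters cost only a bounded loss fails for the same reason. Take $g(p)=\pi$ for $p\equiv 3\pmod 4$ and $g(p)=0$ otherwise. Then $F_1$ agrees with $\chi_4$ at every odd prime, while $U_g(x)\gg\log\log x$.

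What you actually need is that $F_u$ pretends to be $n^{i\lambda u}$, with one and the same $\lambda$ and the trivial character, for \emph{every} $u\in[-1,1]$. Only then does integrating in $u$, via $\int_{-1}^1(1-\cos(u\alpha))\,du\asymp\min\{1,\alpha^2\}$, give $U_g(x;\lambda)\asymp\int_{-1}^1\mb{D}(F_u,n^{i\lambda u};x)^2\,du$.

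The paper obtains this uniformity in four steps. Let $Y_k$ be the set of $u$ at which $f_u$ is within distance $k$ of its best twist $\chi_un^{it_u}$.
\begin{enumerate}[(i)]
\item Ruzsa's covering lemma \cite[Lem. 5.3]{Ruzsa} gives $[-1,1]\subseteq r_kY_k$ with $r_k\asymp 1/\lambda(Y_k)$. So every $f_u$ lies within about $r_kk$ of some $\tilde\chi_un^{i\tilde t_u}$ (Lemma \ref{lem:UppBdMX}).
\item The repulsion estimate \cite[Lem. 7.8]{KMTPLMS} makes this optimal twist essentially unique. Hence $\tilde\chi_{u_1+u_2}=(\tilde\chi_{u_1}\tilde\chi_{u_2})^\ast$ and $\tilde t_{u_1+u_2}\approx\tilde t_{u_1}+\tilde t_{u_2}$ (Lemma \ref{lem:Cauchy}).
\item Steinhaus' lemma then forces every $\tilde\chi_u$ to be trivial.
\item Ruzsa's stability lemma for approximately additive functions \cite[Lem. 5.11]{Ruzsa} gives $\tilde t_u\approx\lambda u$ (Lemma \ref{lem:complambda}).
\end{enumerate}

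The repulsion step is valid only while $(r_kk)^2<\tfrac{1}{400}\log\log X$. This is why the levels with $\lambda(Y_k)<1/R$ are bounded only trivially, contributing $1/R$, with $R\asymp\sqrt{\log\log X}/K$ and $K\asymp\sqrt{\log\log\log X}$. That is the true source of the $(\log\log\log x/\log\log x)^{1/2}$ term. It does not come from the Tao--Ter\"av\"ainen input, which the paper uses in the clean form $\ll e^{-cM_X(f_u;T,Q)}$, with conductors up to $(\log X)^{1/125}$ rather than bounded. So hoping to get this term from a ``Pilatte-strength'' error is misplaced.

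Finally, even granting Step 2, a single threshold $D$ yields only $D/\sqrt{U_g(x)}+e^{-cD^2}$, which loses a factor $\sqrt{\log U_g(x)}$. The paper avoids this loss by summing $e^{-ck^2}\lambda(Y_k)$ over all levels $k_0\le k\le K$.
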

We will also require the following.
\begin{lem}\label{lem:Vfinite}
Assume that $V_g(x) \not\ra \infty$ as $x \ra \infty$. Then $g$ has finite concentration.
\end{lem}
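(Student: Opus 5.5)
Since $V_g(x)$ is nondecreasing in $x$ (each $U_g(x;\lambda)$ is nondecreasing in $x$, so their infimum is too), the hypothesis $V_g(x) \not\ra \infty$ means $V_g(x) \leq M$ for all $x$, for some constant $M$. The plan is a compactness argument in $\lambda$. For each integer $N \geq 2$ choose $\lambda_N$ with $U_g(N;\lambda_N) \leq M+1$. We first show that $(\lambda_N)_N$ is bounded, then extract a convergent subsequence $\lambda_{N_j} \ra \lambda^\ast$, and finally show $U_g(x;\lambda^\ast) \leq M+1$ for every $x$. Letting $x \ra \infty$ then gives $\sum_p \min\{1,g_{\lambda^\ast}(p)^2\}/p < \infty$, which is finite concentration.

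\textbf{Boundedness of $\lambda_N$.} This is the main step. Fix a large parameter $y$. The idea is that two values $\lambda,\lambda'$ both giving small $U_g(N;\cdot)$ must be close, by a triangle inequality on prime values. Since $\min\{1,(a+b)^2\} \leq 2\min\{1,a^2\}+2\min\{1,b^2\}$, for any $\lambda,\lambda'$ and $N \geq y$ we have
$$
\sum_{p \leq y} \frac{\min\{1,(\lambda-\lambda')^2(\log p)^2\}}{p} \leq 2U_g(N;\lambda) + 2U_g(N;\lambda').
$$
If $|\lambda-\lambda'| \geq 1$, then every prime $p \geq 3$ has $(\lambda - \lambda')^2 (\log p)^2 \geq (\log 3)^2 > 1$, so the left side is $\gg \log\log y$. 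Choosing $y$ with $\log\log y \gg M$, this shows that for all $N, N' \geq y$ we have $|\lambda_N - \lambda_{N'}| < 1$, using $U_g(N;\lambda_{N'}) \leq U_g(N';\lambda_{N'}) \leq M+1$ when $N \leq N'$. Hence $(\lambda_N)_{N \geq y}$ lies in a bounded interval.

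\textbf{Passing to the limit.} For fixed $x$, the function $\lambda \mapsto U_g(x;\lambda)$ is a finite sum of continuous functions of $\lambda$, hence continuous. For $N_j \geq x$ we have $U_g(x;\lambda_{N_j}) \leq U_g(N_j;\lambda_{N_j}) \leq M+1$. Letting $j \ra \infty$ gives $U_g(x;\lambda^\ast) \leq M+1$ for every $x$, so $\sum_p \min\{1, g_{\lambda^\ast}(p)^2\}/p \leq M+1 < \infty$.

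The only delicate point is the boundedness step. A cleaner route is to argue directly: if $|\lambda_N| \ra \infty$ along a subsequence, compare $\lambda_N$ with $\lambda_{N'}$ for fixed large $N'$, so that $|\lambda_N - \lambda_{N'}| \geq 1$, and the displayed inequality then forces $\log\log y \ll M$, which contradicts the choice of $y$. No deeper input from the paper (such as Proposition \ref{prop:gapConc}) is needed for this lemma.
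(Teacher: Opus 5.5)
Your proof is correct. Its overall architecture matches the paper's: monotonicity of $V_g$, boundedness of the (near-)minimisers, extraction of a convergent subsequence, and passage to the limit prime by prime. Your continuity argument for the finite sum $U_g(x;\cdot)$ plays the role of the paper's appeal to Fatou's lemma.

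The genuine difference is in the boundedness step. The paper fixes $T$ with $\sum_{p\le T}1/p > B$. Then $U_g(T;\lambda(x))\le V_g(x)\le B$ forces some prime $p\le T$ with $|g(p)-\lambda(x)\log p|<1$. This pins $\lambda(x)$ to within $1/\log p \le 1/\log 2$ of $g(p)/\log p$, which gives an explicit bound in terms of the values of $g$ at primes up to $T$.

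You instead use the quasi-triangle inequality $\min\{1,(a+b)^2\}\le 2\min\{1,a^2\}+2\min\{1,b^2\}$. It shows that any two near-minimisers at scales $\ge y$ lie within distance $1$ of each other. This is a small instance of the ``repulsion'' mechanism behind the pretentious triangle inequality used in Section \ref{sec:conc}.

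What each route buys:
\begin{itemize}
\item The paper's route uses a single prime and gives an explicit bound, but it implicitly uses that the infimum defining $V_g(x)$ is attained. That is true here by continuity, but the paper does not say it.
\item Your route gives a slightly stronger stability statement: all near-minimisers at large scales cluster in a unit interval. Because you work with $\lambda_N$ satisfying only $U_g(N;\lambda_N)\le M+1$, it also sidesteps the attainment question entirely.
\end{itemize}
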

\begin{proof}
In the sequel, write $\lambda(x)$ to denote a minimiser for $V_g$ at scale $x$. Observe first of all that $V_g(x)$ is non-decreasing: if $x' > x$ then by restricting to the primes $p \leq x$, we have
$$
V_g(x') = \sum_{p \leq x'} \frac{\min\{1,(g(p) - \lambda(x')\log p)^2\}}{p} \geq \sum_{p \leq x} \frac{\min\{1,(g(p) - \lambda(x')\log p)^2\}}{p} \geq V_g(x).
$$
Therefore, if $V_g(x) \not\ra \infty$ then there is $B > 0$ such that $V_g(x) \leq B$ for all $x$. \\
Let $T$ be a parameter, chosen sufficiently large that
$$
\sum_{p \leq T} \frac{1}{p} > B.
$$
It follows that for every $x \geq T$,
$$
\sum_{p \leq T} \frac{1}{p} > B \geq V_g(x) \geq V_g(T) = \sum_{p \leq T} \frac{\min\{1,(g(p)-\lambda(x)\log p)^2\}}{p},
$$
and therefore that there is $p \leq T$ such that $|g(p) - \lambda(x) \log p| < 1$. But then we deduce that
$$
|\lambda(x)| \leq 1 + \max_{p \leq T} \frac{|g(p)|}{\log p},
$$
which depends only on $B$ and $g$, but not on $x$. Thus, the map $x \mapsto \lambda(x)$ is uniformly bounded, and by compactness there is a subsequence $(x_k)_k$ and a limit point $\lambda$ such that $\lambda(x_k) \ra \lambda$ as $k \ra \infty$. \\
Let $y \geq 2$. By Fatou's lemma, we have
$$
\sum_{p \leq y} \frac{\min\{1,g_{\lambda}(p)^2\}}{p} = \sum_{p \leq y} \lim_{k \ra \infty} \frac{\min\{1,g_{\lambda(x_k)}(p)^2\}}{p} \leq 
\liminf_{k \ra \infty} V_g(x_k) \leq B.
$$
But since this holds for arbitrary $y$ and that the LHS is non-decreasing in $y$, we see on taking $y \ra \infty$ that
$$
\sum_p \frac{\min\{1,g_{\lambda}(p)^2\}}{p} < \infty,
$$
as claimed.
\end{proof}
For the remainder of this section we shall assume for the sake of contradiction that $g$ does not have finite concentration. By Lemma \ref{lem:Vfinite}, $V_g(x) \ra \infty$.
For $x$ large and $\omega \geq 1$, define
\begin{align*}
\mc{M}_g(\omega) &:= \mc{L}_f(\omega) \cup \{n \in \mb{N} : \max\{g(n+1),g(n-1)\} \leq g(n) \leq \max\{g(n+1),g(n-1)\} + \Delta\}, \\
\mc{M}_g(x;\omega) &:= \mc{M}_g(\omega) \cap [1,x].
\end{align*}
Assuming Proposition \ref{prop:gapConc} for now, we easily obtain the following corollary. 
\begin{cor}[Infinite concentration and clusters of consecutive values] \label{cor:clusters}
Let $\omega \geq 1$, and let $\e > 0$ be small. Assume that $V_g(x) \ra \infty$ as $x \ra \infty$.
Then $\mc{M}_g(\omega)$ has logarithmic density $0$. Consequently, there is an infinite subsequence $(x_k)_k$ on which
$$
|\mc{M}_g(x_k;\omega)| = o_{k \ra \infty}(x_k).
$$
\end{cor}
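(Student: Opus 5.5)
The plan is to cover the two pieces of $\mc{M}_g(\omega)$ separately, using the standing assumptions of this section. The first piece, $\mc{L}_f(\omega)$, already has logarithmic density $0$, since we have arranged (after possibly replacing $f$ by $1/f$) that $\delta(\mc{L}_f(\omega)) = 0$.

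For the second piece, suppose $\max\{g(n+1),g(n-1)\} \leq g(n) \leq \max\{g(n+1),g(n-1)\} + \Delta$. Then for some $\e \in \{\pm 1\}$ we have $g(n)-g(n+\e) \in [0,\Delta]$. If $\e = 1$, then $m = n$ satisfies $|g(m+1)-g(m)+\Delta/2| \leq h/2$ with $h := \max\{\Delta,1\}$. If $\e = -1$, then $m = n-1$ satisfies $|g(m+1)-g(m)-\Delta/2| \leq h/2$. So the second piece, intersected with $(y^{1/2},y]$, has logarithmic weight at most $2\mc{Q}_g(y;1,h)\log y + O(1/\sqrt{y})$; the error term absorbs the shift $n \mapsto n-1$ at the endpoint. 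Since $U_g(y) \geq V_g(y) \ra \infty$, Proposition \ref{prop:gapConc} gives $\mc{Q}_g(y;1,h) \leq \eta(y)$ with $\eta(y) \ra 0$ as $y \ra \infty$.

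The one genuine point, which I expect to be the only obstacle, is that $\mc{Q}_g$ only sees the range $(\sqrt{x},x]$. The initial segment $n \leq \sqrt{x}$ carries half of the total weight $\log x$, so it cannot simply be discarded. I will handle this by a dyadic-in-the-exponent decomposition. Fix $J \in \mb{N}$ and split $[1,x]$ into the ranges $(x^{2^{-j-1}}, x^{2^{-j}}]$ for $0 \leq j < J$, together with the remainder $[1,x^{2^{-J}}]$. Applying the bound above at each scale $y = x^{2^{-j}}$ gives
$$
\frac{1}{\log x}\sum_{\ss{n \leq x \\ n \in \mc{M}_g(\omega)\setminus \mc{L}_f(\omega)}} \frac{1}{n} \ll \sum_{j<J} 2^{-j} \sup_{y \geq x^{2^{-J}}} \eta(y) + 2^{-J} + o(1).
$$
Letting $x \ra \infty$ with $J$ fixed, and then $J \ra \infty$, shows that the upper logarithmic density of this piece is $0$. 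Combined with the first piece, $\delta(\mc{M}_g(\omega)) = 0$.

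Finally, we deduce the subsequence statement. Suppose instead that $|\mc{M}_g(x;\omega)| \geq cx$ for some $c > 0$ and all $x \geq x_0$. Partial summation gives
$$
\sum_{n \in \mc{M}_g(x;\omega)} \frac{1}{n} = \frac{|\mc{M}_g(x;\omega)|}{x} + \int_1^x \frac{|\mc{M}_g(t;\omega)|}{t^2}\,dt \geq c\log x - O(1),
$$
which would force $\underline{\delta}(\mc{M}_g(\omega)) \geq c$, a contradiction. Hence $\liminf_{x\ra\infty} |\mc{M}_g(x;\omega)|/x = 0$, and this produces the desired sequence $(x_k)_k$.
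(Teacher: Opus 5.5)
Your proposal is correct and follows essentially the same route as the paper. You reduce the non-peak part of $\mc{M}_g(\omega)$ to the gap concentration function $\mc{Q}_g(\cdot;1,h)$ via the two one-sided cases, sum over the ranges $(x^{2^{-j-1}},x^{2^{-j}}]$, invoke Proposition \ref{prop:gapConc} using $U_g \geq V_g \ra \infty$, and extract the subsequence by partial summation.

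The differences are cosmetic. You fix $J$ and let $J \ra \infty$ afterwards, whereas the paper takes $J = \lfloor \log\log\log x\rfloor$. You also prove the subsequence claim by contradiction, while the paper explicitly chooses minimisers of $|\mc{M}_g(y;\omega)|/y$ on $[2^{2^{k-1}},2^{2^k}]$. Your fixed-$J$ version is, if anything, slightly cleaner, since it needs no uniformity as $x \ra \infty$.
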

\begin{proof}
Note that the condition 
\begin{equation}\label{eq:midRange}
\max\{g(n+1),g(n-1)\} \leq g(n) \leq \max\{g(n+1),g(n-1)\} + \Delta
\end{equation}
implies that for one of $\e \in \{-1,+1\}$,
$$
0 \leq g(n)-g(n+\e) \leq \Delta+1.
$$
Put $J := \lfloor \log\log\log x\rfloor$. By the union bound, we deduce that
\begin{align*}
\frac{1}{\log x} \sum_{\ss{n \leq x \\ \eqref{eq:midRange} \text{ holds for $n$}}} \frac{1}{n} &\leq \frac{2}{\log x} \sum_{\ss{n \leq x \\ |g(n+1)-g(n)| \leq \Delta+1}} \frac{1}{n} + o(1) \\
&= 2\sum_{0 \leq j \leq J} \frac{1}{2^j}\left(\frac{1}{\log(x^{2^{-j}})}  \sum_{\ss{x^{2^{-(j+1)}} < n \leq x^{2^{-j}} \\ |g(n+1)-g(n)| \leq \Delta+1}} \frac{1}{n}\right) + o(1) \\
&\leq 2\sum_{0 \leq j \leq J} \frac{1}{2^j} \mc{Q}_g(x^{2^{-j}};1,2(\Delta+1)) + o(1).
\end{align*}
By assumption, we have $V(x) \ra \infty$ as $x \ra \infty$, so that since $U_g(x) \geq V(x)$ for all $x$, Proposition \ref{prop:gapConc} yields 
$$
\frac{1}{\log x} \sum_{\ss{n \leq x \\ \eqref{eq:midRange} \text{ holds for $n$}}} \frac{1}{n} = o(1), \quad x \ra \infty.
$$
Since $\delta(\mc{L}_f(\omega)) = 0$ and $\delta(\mc{A} \cup \mc{B}) \leq \delta(\mc{A}) + \delta(\mc{B})$ whenever $\mc{A}$ and $\mc{B}$ have logarithmic density, this proves the first claim. The second follows from the first by partial summation, since
$$
\frac{1}{\log x} \sum_{\ss{\sqrt{x} < n \leq x \\ n \in \mc{M}_g(\omega)}} \frac{1}{n} \geq \frac{1}{\log x} \int_{\sqrt{x}}^x \left(\frac{|\mc{M}_g(y;\omega)|}{y}\right) \frac{dy}{y} \geq \frac12 \min_{\sqrt{x} \leq y \leq x} \frac{|\mc{M}_g(y;\omega)|}{y},
$$
on selecting, for each $k \geq 1$ the point $x_k$ to be the minimum between points $2^{2^{k-1}}$ and $2^{2^{k}}$. 
\end{proof}
From this corollary, we deduce the following monotonicity data for the function $g$. 
\begin{lem} \label{lem:valleys}
Let $f: \mb{N} \ra (0,\infty)$ be a multiplicative function for which $\delta(\mc{L}_f(\omega))= 0$. Let $(x_k)_k$ be an infinite subsequence, and write 
$$
N_k := |\mc{M}_g(x_k;\omega)|, \quad \mc{M}_g(x_k;\omega) = (m_j)_{1 \leq j \leq N_k}.
$$
Set also $m_0 := 0$. Then for every $k \geq 1$ and $0 \leq j \leq N_k-1$ 
there is $\ell_j \in (m_j,m_{j+1}]$ such that
\begin{enumerate}[(i)]
\item $g(n+1) < g(n) \text{ for all } m_j < n <\ell_j$
\item $g(n) \leq g(n+1) \text{ for all } \ell_j \leq n < m_{j+1}.$
\end{enumerate}
\end{lem}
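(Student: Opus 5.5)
The plan is to read off (i) and (ii) from the fact that integers lying strictly between two consecutive elements of $\mc{M}_g(\omega)$ satisfy a strict ``no peak'' inequality. No density input is needed; the hypothesis $\delta(\mc{L}_f(\omega))=0$ and Corollary \ref{cor:clusters} only make the set $\mc{M}_g(\omega)$ sparse, and the lemma itself is purely combinatorial.

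First, fix $k$ and $0\le j\le N_k-1$. Since $\mc{M}_g(x_k;\omega)$ consists of all elements of $\mc{M}_g(\omega)$ up to $x_k$, and $m_{j+1}\le x_k$, every integer $n$ with $m_j<n<m_{j+1}$ lies outside $\mc{M}_g(\omega)$. For such $n$ there are three possibilities:
\[
g(n)>\max\{g(n\pm1)\}+\Delta,\qquad \max\{g(n\pm1)\}\le g(n)\le \max\{g(n\pm1)\}+\Delta,\qquad g(n)<\max\{g(n\pm 1)\}.
\]
The first means $n\in\mc{L}_f(\omega)$ and the second is the defining condition of the other part of $\mc{M}_g(\omega)$, so both are excluded. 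Hence for all $m_j<n<m_{j+1}$,
\begin{equation*}
g(n)<\max\{g(n-1),g(n+1)\}. \tag{$\ast$}
\end{equation*}
For $j=0$ the value $n=1$ would involve $g(0)$; I would handle this boundary case by convention, for instance by starting the range at $n\ge 2$, and note that it is harmless for later telescoping.

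Next, define $\ell_j$ as the least $n\in(m_j,m_{j+1}]$ such that either $n=m_{j+1}$ or $g(n)\le g(n+1)$. This is well defined because $m_{j+1}$ itself qualifies. Property (i) then holds by minimality: every $n$ with $m_j<n<\ell_j$ satisfies $n<m_{j+1}$ and $g(n)>g(n+1)$.

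For (ii), I argue by induction on $n$ from $\ell_j$ up to $m_{j+1}-1$. If $\ell_j=m_{j+1}$ there is nothing to prove. Otherwise the base case $g(\ell_j)\le g(\ell_j+1)$ holds by the definition of $\ell_j$. For the inductive step, suppose $g(n)\le g(n+1)$ with $n+1<m_{j+1}$. Then $n+1$ is strictly between $m_j$ and $m_{j+1}$, so $(\ast)$ gives $g(n+1)<\max\{g(n),g(n+2)\}$. Because $g(n+1)\ge g(n)$, the maximum must be attained at $g(n+2)$, so $g(n+1)<g(n+2)$, and in particular $g(n+1)\le g(n+2)$. This propagates the inequality up to $n=m_{j+1}-1$.

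The only step requiring care is the first: verifying that membership outside $\mc{M}_g(\omega)$ really yields the strict inequality $(\ast)$. Once $(\ast)$ is in hand, the single-valley structure follows immediately from the induction above.
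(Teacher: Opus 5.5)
Your proposal is correct and is essentially the paper's proof. Both arguments note that $\mathcal{M}_g(\omega)$ is exactly the set of $n$ with $g(n)\ge\max\{g(n-1),g(n+1)\}$, so that integers strictly between consecutive $m_j$ satisfy the strict no-peak inequality. Both then take $\ell_j$ to be the first non-descent and propagate $g(n)\le g(n+1)$ by the same induction. Your remarks that the density hypothesis plays no role in this lemma, and that the $n=1$ boundary term involves $g(0)$ (a point the paper passes over silently), are both accurate.
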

\begin{proof}
By assumption, for each $n \in \mb{N} \bk \mc{L}_f(\omega)$ we have
$$
g(n)\leq \max\{g(n-1),g(n+1)\} + \Delta.
$$
Therefore, if $m_j < n < m_{j+1}$ for some $j \geq 0$ then we get the stronger upper bound $g(n) < \max\{g(n-1),g(n+1)\}$. Now 
define $\ell_j \in (m_j,m_{j+1}]$ to be the minimal $n$ such that $g(n+1) \geq g(n)$ in the interval (if $g$ is decreasing in the entire interval $(m_j,m_{j+1})$ then set $\ell_j = m_{j+1}$). Now, we argue by induction that $g(n) \leq g(n+1)$ for all $\ell_j \leq n < m_{j+1}$ (the claim being vacuous in the case that $\ell_j = m_{j+1}$). \\
The base case is $g(\ell_j) \leq g(\ell_j+1)$, which holds by construction. Next, assume the same relation holds for each $\ell_j \leq n < m_{j+1}-1$, and let $\ell_j< n < m_{j+1}$. As $g(n) \geq g(n-1)$ and $g(n) < \max\{g(n-1),g(n+1)\}$ it follows that $g(n) < g(n+1)$. The claim then follows by induction.
\end{proof}
We now recall the setup of the clipping and normalisation scheme from \cite{ErdAI}. In addition, we provide additional insight about the mechanism, which is quite general and should have future applications beyond this paper. See also the appendix to this paper for proofs of the key results cited here. \\
Let $K > 1$ and $M \in (0,1/4)$ be parameters to be chosen later. Define the ``clipping'' function $\phi_K : \mb{R} \ra \mb{R}$ by
$$
\phi_K(t) :=  \text{sign}(t) \min\{K, |t|\} = \begin{cases} t &\text{ if } |t| \leq K \\ K &\text{ if } t > K \\ -K &\text{ if } t < -K. \end{cases}
$$
This has a number of nice, easily-checked properties, which we record in the following lemma. We omit the proofs, which follow from simple casework.
\begin{lem}\label{lem:clipProp}
The following properties hold:
\begin{enumerate}[(i)]
\item (Lipschitzness) If $u,v \in \mb{R}$ then $|\phi_K(u+v)-\phi_K(u)| \leq |v|$.
\item (Monotonicity) If $t_1 \leq t_2$ then $\phi_K(t_1) \leq \phi_K(t_2)$.
\item (Contractivity) We have $|\phi_K(t)| \leq \min\{K,|t|\}$ for all $t \in \mb{R}$.
\end{enumerate}
\end{lem}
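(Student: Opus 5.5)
The three properties follow from a short case analysis. The key observation is that $\phi_K$ is the metric projection of $\mb{R}$ onto $[-K,K]$, i.e.
$$
\phi_K(t) = \max\{-K,\min\{K,t\}\}.
$$
This formula can be checked directly against the three cases in the definition.

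\textbf{Monotonicity (ii).} The maps $t \mapsto \min\{K,t\}$ and $s \mapsto \max\{-K,s\}$ are both non-decreasing. Their composition is therefore non-decreasing, which gives (ii).

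\textbf{Contractivity (iii).} By construction $|\phi_K(t)| \leq K$ in every case. For $|t| \leq K$ we have $|\phi_K(t)| = |t|$. For $|t| > K$ we have $|\phi_K(t)| = K < |t|$. In both cases $|\phi_K(t)| \leq \min\{K,|t|\}$.

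\textbf{Lipschitzness (i).} Set $s = u+v$ and $t = u$. I will show $|\phi_K(s)-\phi_K(t)| \leq |s-t|$. By symmetry we may assume $t \leq s$. Monotonicity (ii) then gives $0 \leq \phi_K(s)-\phi_K(t)$, so it suffices to prove $\phi_K(s)-\phi_K(t) \leq s-t$. Equivalently, the function $r \mapsto r - \phi_K(r)$ should be non-decreasing. This holds because
$$
r - \phi_K(r) = \begin{cases} r+K &\text{ if } r < -K \\ 0 &\text{ if } |r| \leq K \\ r-K &\text{ if } r > K, \end{cases}
$$
which is continuous and non-decreasing on each of the three pieces, hence non-decreasing on all of $\mb{R}$. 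Taking $s-t = v$ (or $-v$ when $v<0$) gives $|\phi_K(u+v)-\phi_K(u)| \leq |v|$.

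None of these steps is a genuine obstacle. The only point needing care is (i), where a direct nine-case comparison of where $u$ and $u+v$ sit relative to $\pm K$ would be tedious. The monotonicity of $r - \phi_K(r)$ avoids that case split.
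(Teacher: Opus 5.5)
Your proof is correct and complete. The paper omits the argument entirely, saying only that all three properties follow from simple casework; your clamp representation $\phi_K(t)=\max\{-K,\min\{K,t\}\}$ and your reduction of (i) to the monotonicity of $r\mapsto r-\phi_K(r)$ are a tidy way of organizing that same casework.
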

Let $X$ be a large parameter. Because $g$ does not have finite concentration by assumption, \cite[Lem. 4.1]{ErdAI} (for a proof, see Lemma \ref{lem:4point1} below), we may choose parameters $s_X = s_X(M) > 0$ and $c_X = c_X(M) \in \mb{R}$ so that if $\lambda_X := c_X s_X$ then
\begin{equation}\label{eq:normalise}
M = c_X^2 + \sum_{p \leq X} \frac{\min\{1,(g(p)/s_X - c_X \log p)^2\}}{p} = c_x^2 + \frac{1}{s_X^2}\sum_{p \leq X}\frac{\min\{s_X^2, g_{\lambda_X}(p)^2\}}{p}.
\end{equation}
These parameters satisfy $c_X \ra 0$ and $s_X \ra \infty$, as $X \ra \infty$. By replacing $g$ with $g_{\lambda_X}/s_X$ we have centred and normalised $g$ in a scale-dependent way. Moreover, by fixing $M$, which will be chosen small but fixed (as $X \ra \infty$) in \eqref{eq:normalise}, we have ensured that the set of primes $p$ with $|g_{\lambda_X}(p)| > s_X$ is sparse, and therefore ``typical'' integers $n$ have few such primes with this property. \\
We furthermore define
$$
\mc{T}_X := \{p \leq X : |g_{\lambda_X}(p)| > s_X\}, \quad a_X := \frac{1}{s_X}\sum_{\ss{p \leq X \\ p \notin \mc{T}_X}} \frac{g_{\lambda_X}(p)}{p},
$$
as well as the ``clipped'' function
\begin{align*}
h_X(n) &:= \phi_K(g_{\lambda_X}(n)/s_X - a_X).
\end{align*}
Note that while $h_X$ is \emph{not additive} in general, it is a \emph{uniformly bounded} variant of $g_{\lambda_X}/s_X$ that, by Lemma \ref{lem:clipProp}(i) and (ii), still approximately obeys the same order relations that $g$ itself does (see e.g. \eqref{eq:gtohSign} below).\\
Together with the previous lemmas, we use this latter observation crucially as follows.
\begin{lem}[Pointwise comparison to a short interval average after clipping] \label{lem:MRclipped}
Let $f: \mb{N} \ra (0,\infty)$ be a multiplicative function. Let $\omega \geq 1$ and suppose that $\delta(\mc{L}_f(\omega)) = 0$. Assume that $g = \log f$ satisfies $V_g(x) \ra \infty$ as $x \ra \infty$. Then there is an infinite sequence $(x_k)_k$ and a function $H_k = H(x_k) \ra \infty$ as $k \ra \infty$ for which
$$
\frac{1}{x_k} \sum_{H_k < n \leq x_k} \left|h_{x_k}(n) - \frac{1}{H_k}\sum_{1 \leq j \leq H_k} h_{x_k}(n-j)\right|^2 = o_{k \ra \infty}(1).
$$
\end{lem}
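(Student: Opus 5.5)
The plan is to follow the \cite{ErdAI} strategy. We compare $h_{x_k}$ with a clipped \emph{additive} approximant, to which the Matom\"aki--Radziwi\l\l{}-type theorem for additive functions from \cite{Mangerel} applies. We then use the monotonicity structure of Lemma \ref{lem:valleys} to control the remaining error.

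Take $(x_k)_k$ to be the sequence from Corollary \ref{cor:clusters}, so that $N_k = |\mc{M}_g(x_k;\omega)| = o(x_k)$. Write $X = x_k$. Introduce the additive function $G_X$ defined on prime powers by $G_X(p^\nu) := \phi_1(g_{\lambda_X}(p^\nu)/s_X)$, truncated to $p \le X$. The normalisation \eqref{eq:normalise} gives two facts. First, the variance $\sum_{p\le X}\min\{1,(g_{\lambda_X}(p)/s_X)^2\}/p \le M$ is bounded. Second, $\sum_{p\in\mc{T}_X}1/p \le M$. A Tur\'an--Kubilius argument, using Lemma \ref{lem:clipProp}(i) and (iii), then shows that $h_X(n)$ and $\phi_K(G_X(n)-a_X)$ differ only for $n$ divisible by some $p\in\mc{T}_X$ (or by a large prime power). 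In $L^2$ this discrepancy is $O(M + K^2 M)$ on average. Since $M$ is arbitrary, it can be absorbed after a diagonal choice $M=M_k\to0$, provided the parameters $s_X,c_X$ still behave well; this is the main point needing care. Next we apply the short-interval mean value theorem of \cite{Mangerel} to $G_X$, whose normalised variance is bounded. It gives $\frac1X\sum_{n\le X}|G_X(n)-\frac1H\sum_{j\le H}G_X(n-j)|^2 \to 0$ for any $H=H(X)\to\infty$ slowly. By Lipschitzness this transfers to $\phi_K(G_X-a_X)$, and hence to $h_X$, up to the exceptional set above.

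The main obstacle is the contribution of $n$ divisible by a prime in $\mc{T}_X$, where $h_X$ may be far from the additive model. Here the ordering is used. By Lemma \ref{lem:clipProp}(ii), whenever $g(n)\le g(n+1)$ we also have $h_X(n)\le h_X(n+1)$, and likewise for strict decrease. So by Lemma \ref{lem:valleys}, on each gap $(m_j,m_{j+1}]$ the function $h_X$ is decreasing and then nondecreasing, and it is bounded by $K$. For $n$ with $[n-H,n]$ avoiding every $m_j$, the window $\{h_X(n-j)\}_{j\le H}$ is therefore unimodal (a single valley) and bounded. Such $n$ are all but $O(HN_k)=o(X)$ of them, provided $H_k$ grows slowly relative to $x_k/N_k$.

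On a unimodal window, $|h_X(n)-\text{avg}|$ is bounded by the oscillation over the window. To bound this oscillation we telescope. The total variation of $h_X$ over a gap is at most $4K$ plus the variation of the monotone pieces. Summing $|h_X(n)-h_X(n-1)|$ over $n\le X$ therefore gives $O(K(N_k+1)) + (\text{sum of monotone increments})$, and each monotone increment is dominated by the corresponding increment of the additive model, which is small on average. Combining this with the $L^2$ control on the non-exceptional $n$ shows that the exceptional $n$ contribute $o(1)+O_K(M)$. Finally, choose $M=M_k\to0$ and $H_k\to\infty$ slowly enough that every error term above is $o(1)$.
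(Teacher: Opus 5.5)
There is a genuine gap, in the step your argument relies on most. The Matom\"aki--Radziwi\l\l{}-type theorem for additive functions (as used in Lemma \ref{lem:2point1}) says that the short average $\frac1H\sum_{j\le H}G_X(n-j)$ is close to the \emph{long} average. It does not say that $G_X(n)$ is close to its own short average, and for fixed $M$ that claim is false. By Ruzsa's variance estimate (as in Lemma \ref{lem:LowBd}), $G_X(n)$ deviates from its mean by $\gg M$ in mean square for small $M$, so $G_X(n)-\frac1H\sum_{j\le H}G_X(n-j)$ does too. Indeed, the proof of Theorem \ref{thm:redtoFinConc} gets its contradiction exactly because the present lemma, combined with MR via Lemma \ref{lem:UppBd}, would force this variance below $c_0M$. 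So the comparison of $h_X$ with its short average cannot come from additivity; it has to come from the order structure.

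As a result your bounds carry errors $O(M+K^2M)$ and $O_K(M)$. These cannot be removed by a diagonal choice $M=M_k\to0$. The function $h_{x_k}$ is defined through $s_X(M),c_X(M)$ for a \emph{fixed} $M$ (and fixed $K$), and the downstream contradiction $c_0M/2=o(1)$ is void if $M\to0$. Your claim that monotone increments of $h_X$ are dominated by increments of the additive model is also unjustified. It can fail, for example, when $n$ or $n+1$ has a prime factor in $\mc{T}_X$.

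What you are missing is that your last paragraph, done correctly, is the whole proof. On a stretch where $g$ is monotone, the increments of $h_X$ all have one sign, up to the drift $O(|c_X|/n)$ coming from $\lambda_X\log n$. You must track this drift: $g(n)\le g(n+1)$ gives $h_X(n)\le h_X(n+1)$ only up to this error, cf. \eqref{eq:gtohSign}. The increments therefore telescope to boundary values bounded by $K$. By Lemma \ref{lem:valleys} there are $O(N_k+1)$ such stretches, so
$\sum_{n\le x_k}|h_{x_k}(n+1)-h_{x_k}(n)|\ll KN_k+|c_{x_k}|\log x_k=o(x_k)$,
with no extra ``monotone increments'' term.

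Now write $h(n)-\frac1H\sum_{j\le H}h(n-j)=\frac1H\sum_{0\le i<H}(H-i)\bigl(h(n-i)-h(n-i-1)\bigr)$. Cauchy--Schwarz and $|h(n+1)-h(n)|\le 2K$ bound the mean square by $\ll KH_k^2(KN_k+\log x_k)/x_k$. This is $\ll K/H_k$ for $H_k=\lfloor x_k^{1/3}(KN_k+\log x_k)^{-1/3}\rfloor\to\infty$. This is the paper's argument: it keeps $K$ and $M$ fixed and uses no additive model, and the MR input enters only later, in Lemma \ref{lem:UppBd}.
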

\begin{proof}
By Corollary \ref{cor:clusters}, we obtain a sequence $(x_k)_k$ such that 
$\lim_{k \ra \infty} |\mc{M}_g(x_k;\omega)| /x_k = 0$. For large $k$ let $N_k := |\mc{M}_g(x_k;\omega)|$ and for ease of notation, set $N = N_k$ and $x = x_k$. As in Lemma \ref{lem:valleys}, set $m_0 := 0$ and write $\mc{M}_g(x;\omega) = (m_j)_{1\leq j \leq N}$, so that $m_N \leq x < m_{N+1}$. 
By Lemma \ref{lem:valleys}, we may decompose $[0,x]$ as a union
\begin{equation}\label{eq:split}
[m_{N}, \min\{x+1,\ell_N\}) \cup [\min\{x+1,\ell_N\},x] \cup \bigcup_{0 \leq j \leq N-1} [m_j,\ell_j) \cup [\ell_j,m_{j+1}),
\end{equation}
such that $f$ is decreasing on $(m_j,\ell_j)$ and increasing on $[\ell_j, m_{j+1})$ for each $0 \leq j \leq N-1$, and similarly on the two remaining intervals in \eqref{eq:split}. We note furthermore that using the monotonicity and Lipschitz properties of $\phi_K$ from Lemma \ref{lem:clipProp}, we have that if $g(n+1) \geq g(n)$ then
\begin{align*}
h_{x}(n+1) - h_{x}(n) &= \phi_K(g_{\lambda_{x}}(n+1)/s_x - a_{x}) - \phi_K(g_{\lambda_{x}}(n)/s_x-a_{x}) \\
&= \phi_K(g(n+1)/s_{x} - c_{x}\log n - a_{x}) - \phi_K(g(n)/s_{x} - c_{x}\log n - a_{x}) + O\left(\frac{|c_{x}|}{n}\right),
\end{align*}
wherein the main term on the RHS is non-negative. Similarly, if $g(n) \geq g(n+1)$ then for $n \geq 1$,
\begin{align*}
h_{x}(n) - h_{x}(n+1) &= \phi_K(g_{\lambda_{x}}(n)/s_x - a_{x}) - \phi_K(g_{\lambda_{x}}(n+1)/s_x-a_{x}) \\
&= \phi_K(g(n)/s_{x} - c_{x}\log n - a_{x}) - \phi_K(g(n+1)/s_{x} - c_{x}\log n - a_{x}) + O\left(\frac{|c_{x}|}{n}\right),
\end{align*}
and now the main term on the RHS is also non-negative. We deduce, therefore, that
\begin{align} \label{eq:gtohSign}
|h_{x}(n+1)-h_{x}(n)| = (h_{x}(n+1)-h_{x}(n)) \text{sign}(g(n+1)-g(n)) + O\left(\frac{|c_x|}{n}\right).
\end{align}
Using the partition \eqref{eq:split}, the bound $|h_{x}| \leq K$ and telescoping, we find that
\begin{align*}
&\frac{1}{x} \sum_{n\leq x} |h_{x}(n+1) - h_{x}(n)|  \\
&\leq \frac{1}{x}\sum_{0 \leq j \leq N} \left(\sum_{m_j < n < \min\{x+1,\ell_{j}\}} (h_{x}(n)-h_{x}(n+1)) + \sum_{\min\{x+1,\ell_{j}\} \leq n \leq \min\{x,m_{j+1}-1\}} (h_{x}(n+1) - h_{x}(n))\right) \\
& \qquad\qquad\qquad\qquad \qquad\qquad\qquad\qquad\qquad\qquad\qquad+ \frac{2K(N+1) + O(|c_{x}|\log x)}{x} \\
&= \frac{1}{x} \sum_{0 \leq j \leq N} \left(h_{x}(m_j+1) + h_{x}(\min\{\lfloor x\rfloor + 1, m_{j+1}\}) - 2h_{x}(\min\{\ell_j,\lfloor x\rfloor+1\})\right) + \frac{2K(N+1) + O(|c_{x}|\log x)}{x} \\
&\qquad\qquad\qquad\qquad \qquad\qquad\qquad\qquad\qquad\qquad\qquad\ll \frac{KN + |c_{x}| \log x}{x}.
\end{align*}
We now revert to the notation $x_k$ and $N_k$ in place of $x$ and $N$, to make the dependence on $k$ more explicit. Let $H_k = H(x_k)$ be a parameter to be chosen shortly. 
Using $|h_{x_k}(n)|\leq K$, telescoping and the Cauchy-Schwarz inequality, we have that
\begin{align*}
&\frac{1}{x_k} \sum_{H_k < n \leq x_k} \left|h_{x_k}(n)-\frac{1}{H_k}\sum_{1 \leq j \leq H_k} h_{x_k}(n-j)\right|^2 \\
&= \frac{1}{x_kH_k^2} \sum_{H_k < n \leq x_k} \left|\sum_{1 \leq j \leq H_k} \sum_{0 \leq i \leq j-1} (h_{x_k}(n-i)-h_{x_k}(n-i-1))\right|^2 \\
&= \frac{1}{x_kH_k^2} \sum_{H_k < n \leq x_k} \left|\sum_{0 \leq i \leq H_k-1} (H_k-i)(h_{x_k}(n-i)-h_{x_k}(n-i-1))\right|^2 \\
&\leq \frac{H_k^2}{x_k} \sum_{n \leq x_k} |h_{x_k}(n) - h_{x_k}(n+1)|^2 \ll KH_k^2 \frac{KN_k + |c_{x_k}| \log x_k}{x_k}.
\end{align*}
Since $N_k/x_k \ra 0$, and $|c_{x_k}| \leq 1$ as $k \ra \infty$, we may select
$$
H_k := \left\lfloor x_k^{1/3}(KN_k + \log x_k)^{-1/3}\right\rfloor,
$$
and then
\begin{align*}
\frac{1}{x_k} \sum_{H < n \leq x_k} \left|h_{x_k}(n)-\frac{1}{H_k}\sum_{1 \leq j \leq H_k} h_{x_k}(n-j)\right|^2 \ll K H_k^{-1} = o_{k \ra \infty}(1),
\end{align*}
as claimed.
\end{proof}
To finish the proof of Theorem \ref{thm:redtoFinConc} we will incorporate Lemma \ref{lem:MRclipped} into the proof scheme of \cite{ErdAI}. For this we recall some further notation. Define
$$
\tilde{g}_x(n) := \sum_{p|n} \phi_K(g_{\lambda_x}(p)/s_x), \quad \mu(x) := \sum_{p \leq x} \frac{\tilde{g}_x(p)}{p}, \quad B_{\tilde{g}_x}(x)^2 := \sum_{p \leq x} \frac{\tilde{g}_x(p)^2}{p}.
$$
We note that $\tilde{g}_x$ is a strongly additive function, i.e., $\tilde{g}_x(p^\nu) = \tilde{g}_x(p)$ for all primes $p$ and $\nu \geq 1$, and $\mu(x)$ is (asymptotically) its average value over $[1,x]$. Moreover, $\tilde{g}_x$ has ``clipped'' prime values, so that $|\tilde{g}_x(p)| \leq K$, and because of \eqref{eq:normalise}, 
\begin{equation}\label{eq:Btildeg}
B_{\tilde{g}_x}(x)^2 \leq K^2M.
\end{equation}
A key idea from \cite{ErdAI} (see Lemma 5.1 there, or Lemma \ref{lem:5point1} below) is that one can control the variance between $h_x$ and $\tilde{g}_x - a_x$ explicitly in terms of $K$ and $M$. This is used in the proof of the following lemma.
\begin{lem} \label{lem:UppBd}
Let $x$ be large and let $10 \leq H \leq \sqrt{x}$ be an integer-valued function of $x$ with $H = H(x) \ra \infty$ as $x \ra \infty$. Let $M$ and $K$ be as above, and assume that $x \geq x_0(M,K)$. Then there is an absolute constant $C>0$ such that
\begin{align*}
\frac{1}{x}\sum_{n \leq x}\left|\tilde{g}_x(n) - \mu(x)\right|^2 \leq C\left(\frac{M}{K^2} + M^2\log(1/M) + (KM)^2\right) &+ \frac{4}{x}\sum_{n \leq x} \left|h_x(n)-\frac{1}{H}\sum_{1\leq j \leq H} h_x(n-j)\right|^2 \\
&+ o_{x \ra \infty}(1).
\end{align*}
\end{lem}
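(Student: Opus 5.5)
The plan is to compare $\tilde g_x-\mu(x)$ with its short-interval averages, passing through the clipped function $h_x$. Write $F(n):=\tilde g_x(n)-\mu(x)$. We split
$$
F(n) = \bigl(F(n) - (h_x(n)+\mu'(x))\bigr) + \Bigl(h_x(n) - \tfrac{1}{H}\textstyle\sum_{1\le j\le H} h_x(n-j)\Bigr) + \Bigl(\tfrac1H\textstyle\sum_{1\le j\le H} h_x(n-j) + \mu'(x)\Bigr),
$$
where $\mu'(x)$ is a suitable constant recording the difference between the centrings $a_x$ and $\mu(x)$; we may take it to be $a_x-\mu(x)$ up to $O(1)$ corrections coming from clipping. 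Squaring and using $(a+b+c)^2\le 2b^2+4(a^2+c^2)$, or a similar weighted inequality, we get three terms. The middle term yields the factor $4$ times the short-interval average in the statement, possibly after adjusting the weights via Cauchy--Schwarz.

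For the first term we invoke the variance comparison \cite[Lem.~5.1]{ErdAI}, i.e.\ Lemma \ref{lem:5point1} of the appendix, which controls $\frac1x\sum_{n\le x}|h_x(n)-(\tilde g_x(n)-a_x)|^2$. The idea behind it is that $h_x$ and $\tilde g_x-a_x$ differ only when $n$ has a prime factor in $\mathcal{T}_x$ (a set of logarithmic weight $\le M$ by \eqref{eq:normalise}), or when the unclipped value exceeds $K$ in size, or through prime power contributions. The first event contributes $O(K^2M\cdot M)$ after using the boundedness by $K$ together with a Tur\'an--Kubilius type estimate, giving the $(KM)^2$ and $M^2\log(1/M)$ terms. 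The second event occurs with probability $\ll M/K^2$ by Chebyshev applied to the variance bound \eqref{eq:Btildeg}, and the Lipschitz property Lemma \ref{lem:clipProp}(i) makes this contribute $\ll M/K^2$. The replacement of $a_x$ by $\mu(x)$ costs only the square of a constant of size $O(\sqrt M)$ or smaller, which is absorbed.

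The remaining term, the short average of $h_x$ around the mean, is the main obstacle. Here we would run the same comparison in reverse. By the previous step, $\frac1H\sum_j h_x(n-j)$ is close in mean square (with the same error $C(\cdots)$, by averaging the bound over shifts and using Cauchy--Schwarz) to $\frac1H\sum_j F(n-j)$. Then we need
$$
\frac1x\sum_{n\le x}\Bigl|\frac1H\sum_{1\le j\le H}F(n-j)\Bigr|^2=o(1).
$$
This is the statement that a strongly additive function with bounded prime values and $B_{\tilde g_x}(x)^2\le K^2M$ has short-interval averages close to its global mean. We would derive it from the Matom\"aki--Radziwi\l\l{} analogue for additive functions from \cite{Mangerel}, which gives the bound $o(B_{\tilde g_x}(x)^2)+o(1)$ once $H\to\infty$, together with the Tur\'an--Kubilius identification of $\mu(x)$ as the mean. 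The uniformity in the normalisation requires $x\ge x_0(M,K)$. Collecting the three pieces, and absorbing cross terms via Cauchy--Schwarz into the constant $C$, gives the claimed inequality.
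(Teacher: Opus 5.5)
Your proposal is correct and is essentially the paper's proof. With $\mu'(x)=a_x-\mu(x)$ taken exactly (an $O(1)$ correction there could not be absorbed, and none is needed), your first term is exactly $G_x-h_x$ with $G_x=\tilde g_x-a_x$, and your third term splits as $\mathcal{A}_H(h_x-G_x)+(\mathcal{A}_H\tilde g_x-\mu(x))$. This recovers the paper's four-term decomposition, which it controls by Lemma \ref{lem:5point1}, Jensen over the $H$ shifts, and Lemma \ref{lem:2point1}. One point to watch: the paper uses \cite{Mangerel} in $L^1$ form and reaches the $L^2$ statement of Lemma \ref{lem:2point1} by H\"older interpolation with Elliott's fourth-moment bound (Lemma \ref{lem:Elliott}), so your claimed mean-square bound $o(B_{\tilde g_x}(x)^2)+o(1)$ should be obtained the same way.
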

\begin{proof}
Following \cite{ErdAI}, for an arithmetic function $F$ write 
$$
\mc{A}_HF(n) := \frac{1}{H}\sum_{1 \leq j \leq H} F(n-j), \quad n \in \mb{N}.
$$ 
Set also $m_x := \mu(x) - a_x$ and $G_x(n) := \tilde{g}_x(n) - a_x$. We then have the decomposition
\begin{align*}
\tilde{g}_x - \mu(x) = 
G_x-m_x=(G_x-h_x)+(h_x-\mathcal A_H h_x)
            +\mathcal A_H(h_x-G_x)+(\mathcal A_H G_x-m_x).
\end{align*}
By H\"{o}lder's inequality, 
$$
\frac{1}{x} \sum_{H < n \leq x} |\mc{A}_H(h_x -G_x)(n)|^2 \leq \frac{1}{x}\sum_{n \leq x} |h_x(n)-G_x(n)|^2.
$$
Next, \cite[Lem. 5.1]{ErdAI} gives that there exists an absolute constant $C' > 0$ such that
\begin{align*}
\frac{1}{x}\sum_{n\leq x} |h_x(n)-G_x(n)|^2 &\leq C'\left(\frac{M}{K^2} + M^2\log(1/M) + (KM)^2\right) + o_{x \ra \infty}(1)
\end{align*}
(for a proof, see Lemma \ref{lem:5point1}). Finally, as $|\tilde{g}_x(p)| \leq K$ and $H = H(x) \ra \infty$ with $x$, \cite[Lem. 2.1]{ErdAI} yields\footnote{This result relies on the fact that $\tilde{g}_x$ is additive, and therefore the Matom\"{a}ki-Radziwi\l\l{} theorem for additive functions, \cite[Thm. 1.1]{Mangerel} applies.}
\begin{align*}
\frac{1}{x}\sum_{H < n \leq x} |\mc{A}_H G_x(n) - m_x|^2 = \frac{1}{x}\sum_{H < n \leq x} |\mc{A}_H \tilde{g}_x(n) - \mu(x)|^2 = o_{x \ra \infty}(1)
\end{align*}
(for a proof, see Lemma \ref{lem:2point1} below).
Combining these with Cauchy-Schwarz, 
we find that there is a constant $C = 8C' > 0$ such that
\begin{align*}
&\frac{1}{x}\sum_{H < n\leq x} |\tilde{g}_x(n)-\mu(x)|^2 \\
&\leq \frac{8}{x}\sum_{n \leq x} |h_x(n)-G_x(n)|^2 + \frac{4}{x}\sum_{H < n \leq x} |\mc{A}_H G_x(n)-m(x)|^2 + \frac{4}{x} \sum_{H < n\leq x} |h_x(n) - \mc{A}_Hh_x(n)|^2 \\
&\leq C\left(\frac{M}{K^2}  + (MK)^2\right) + \frac{4}{x} \sum_{H < n \leq x} |h_x(n)-\mc{A}_H h_x(n)|^2 + o_{x \ra\infty}(1).
\end{align*}
Since $|\tilde{g}_x(n) - \mu(x)| \ll K \log x$ for all $n \leq x$, the terms $n \leq H$ contribute $O(H(K\log x)^2/x)$ to this sum. As $H \leq\sqrt{x}$, the claim follows.
\end{proof}
In the other direction, the following lemma was derived in \cite{ErdAI} using a variance estimate for additive functions, due to Ruzsa \cite{RuzsaVar}. For a proof, see Lemma \ref{lem:LowBdApp} below.
\begin{lem}[Lem. 5.2 of \cite{ErdAI}]\label{lem:LowBd}
There are absolute constants $c_0,C_0 > 0$ such that
$$
\liminf_{X \ra \infty} \frac{1}{X}\sum_{n \leq X} |\tilde{g}_X(n) - \mu(X)|^2 \geq c_0M - C_0(KM)^2.
$$
\end{lem}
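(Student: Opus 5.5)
The plan is to reduce the statement to a lower bound for the variance of a strongly additive function in terms of its prime values, and then to feed in the normalisation \eqref{eq:normalise}. Since $\tilde{g}_X$ is strongly additive with $|\tilde{g}_X(p)| \leq K$, I would invoke Ruzsa's variance estimate \cite{RuzsaVar}. In the form I would use, it gives an absolute constant $c_1>0$ such that for strongly additive $F$,
$$
\frac{1}{X}\sum_{n \leq X} |F(n) - \mu_F(X)|^2 \geq c_1 \min_{\lambda \in \mb{R}}\Big(\lambda^2 + \sum_{p \leq X} \frac{(F(p)-\lambda\log p)^2}{p}\Big) + o(1)
$$
as $X \ra \infty$, at least when $F$ has uniformly bounded prime values; the replacement of the $O(1)$ mean by $\mu(X)$ costs only $o(1)$. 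Applying this with $F = \tilde{g}_X$ and writing $u_p := g_{\lambda_X}(p)/s_X$, so that $\tilde{g}_X(p) = \phi_K(u_p)$, the task becomes showing that for every $\lambda$,
$$
\lambda^2 + \sum_{p \leq X}\frac{(\phi_K(u_p) - \lambda \log p)^2}{p} \geq c_2 M - C_2 (KM)^2 .
$$

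For this comparison I would use the minimality built into the choice of $(s_X,c_X)$ from Lemma \ref{lem:4point1}. I expect that lemma to choose $s_X$ so that $c\mapsto c^2+\sum_{p\le X}\min\{1,(g(p)/s_X - c\log p)^2\}/p$ has minimum value $M$, attained at $c=c_X$. Given $\lambda$, put $c = c_X + \lambda$; then $g(p)/s_X - c\log p = u_p - \lambda\log p$. Minimality gives
$$
M \leq (c_X+\lambda)^2 + \sum_{p \leq X} \frac{\min\{1,(u_p-\lambda\log p)^2\}}{p}.
$$
I would split the primes into $|u_p| \leq K$, where $\phi_K(u_p) = u_p$ and $\min\{1,\cdot\}\le(\cdot)^2$ is trivially fine, and $p \in \mc{T}_X$-type primes with $|u_p|>K$. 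For the latter, if $|\lambda|\log p \leq K-1$ then $(\phi_K(u_p)-\lambda\log p)^2 \geq 1$, so these are also dominated termwise.

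The leftover pieces are the cross term $2c_X\lambda + c_X^2$ and the primes with $|\lambda|\log p > K-1$. I would dispose of them by splitting according to the size of $\lambda$. If $\lambda^2 \geq M/100$, the $\lambda^2$ term alone gives the bound. Otherwise $|\lambda| \ll \sqrt M$, and two facts should control the remaining terms. First, $c_X^2$ is at most $M$ by \eqref{eq:normalise}, and more precisely it is essentially forced to be $O(M^2)$-small relative to the sum once $M$ is small. Second, $\sum_{p \in \mc{T}_X} 1/p \leq M$. Together these should produce only losses of size $O((KM)^2)$, for example via Cauchy--Schwarz applied to $\sum_{p\in\mc T_X}(\lambda\log p)/p$-type cross terms weighted by $K$.

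The main obstacle is exactly this bookkeeping for large primes, where $\lambda\log p$ need not be small even though $\lambda$ is. If the naive split fails, I would first fall back on restricting the variance lower bound to primes $p \leq X^{\eta}$ for a small fixed $\eta$. On that range $|\lambda|\log p \leq \eta\sqrt{M}\log X$ can be handled, and the primes in $(X^\eta, X]$ carry only $O(\log(1/\eta))$ reciprocal mass; the truncation at level $1$ limits their contribution to \eqref{eq:normalise}. I would then take $\liminf_{X\to\infty}$, with $c_0,C_0$ absorbing $c_1,c_2,C_2$.
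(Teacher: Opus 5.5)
\textbf{There is a genuine gap in the range of intermediate $\lambda$.} Your opening steps are sound. Ruzsa's estimate reduces the lemma to a lower bound for $\Phi(\lambda):=\lambda^2+\sum_{p\le X}(\phi_K(u_p)-\lambda\log p)^2/p$. Substituting $c=c_X+\lambda$ into the minimality of $c_X$ correctly gives $M\le(c_X+\lambda)^2+\sum_{p\le X}\min\{1,(u_p-\lambda\log p)^2\}/p$. Your termwise domination is also valid at every prime outside $E_\lambda:=\{p\le X:\ |u_p|>K,\ |\lambda|\log p>K-1\}$.

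The problem is the resulting loss $\sum_{p\in E_\lambda}1/p$. Splitting at $\lambda^2=M/100$ is the wrong scale: $E_\lambda$ is empty only when $|\lambda|\log X\le K-1$. For $(K-1)/\log X<|\lambda|<\sqrt M/10$ your argument only gives $\sum_{p\in E_\lambda}1/p\le\sum_{p\in\mc{T}_X}1/p\le M$, and this can genuinely be all of $M$.

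\emph{Example.} Take $g(p)=e^p$ for $p\equiv1\pmod 4$ and $g(p)=0$ otherwise. Then \eqref{eq:normalise} is carried almost entirely by primes $p\in(X^{1-2M},X]$ with $u_p>K+1$. At $\lambda=K/\log X$, the summands $(\phi_K(u_p)-\lambda\log p)^2$ at these primes are $O((KM)^2)$, whereas the corresponding summands of the normalisation equal $1$. So your chain yields essentially no lower bound there. This happens even though the true value satisfies $\Phi(K/\log X)\gg K^2$; that size comes from the primes $p\equiv3\pmod4$, exactly the terms your truncation at level $1$ throws away. Neither $c_X\to0$ nor a bound like $K\sum_{p\in\mc{T}_X}\log p/p\le KM\log X$ addresses a loss of size $M$.

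\emph{The fallback does not repair this.} Since $\eta\sqrt M\log X\to\infty$, truncating at $X^\eta$ does not empty $E_\lambda$. Moreover $u_p\to0$ for each fixed $p$, so the mass of \eqref{eq:normalise} may sit almost entirely in $(X^\eta,X]$, as in the example, and the truncated comparison then loses the main term.

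\textbf{What is missing.} You need either control on the size of the relevant $\lambda$, or a separate treatment of large $|\lambda|\log X$ that does not truncate at level $1$.

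\emph{The paper's route.} The paper takes the first option. It minimises the quadratic $\theta\mapsto\sum_{p\le X}(\tilde g_X(p)-\theta\log p)^2/p$ explicitly and bounds $\sum_p\tilde g_X(p)^2/p\ge M-c_X^2$. It then controls the subtracted term $(\sum_p\tilde g_X(p)\log p/p)^2/\sum_{p\le X}(\log p)^2/p$ by $2(KM)^2+o(1)$. This uses the first-order condition of Lemma \ref{lem:4point1}(b), which gives $|\sum_p\tilde g_X(p)\log p/p|\le|c_X|+KM\log X$, together with $\sum_{p\le X}(\log p)^2/p\sim\tfrac12(\log X)^2$.

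\emph{A repair within your framework.} You could instead split at $|\lambda|\log X=K-1$.
\begin{itemize}
\item Below this threshold, $E_\lambda=\emptyset$, and your comparison gives $\Phi(\lambda)\ge M-2c_X\lambda-c_X^2=M-o(1)$.
\item Above it, apply Minkowski's inequality in $\ell^2(1/p)$ with $\sum_p\phi_K(u_p)^2/p\le K^2M$. This gives $\Phi(\lambda)\ge\bigl(\tfrac{K-1}{\sqrt2}-K\sqrt M-o(1)\bigr)^2>M$ once, say, $K\ge3$ and $M\le1/50$.
\end{itemize}
That covers the regime actually used in Theorem \ref{thm:redtoFinConc}. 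The first-order condition is what makes the paper's bound uniform in all $K>1$.
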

\begin{proof}[Proof of Theorem \ref{thm:redtoFinConc}]
Assume that $f: \mb{N} \ra (0,\infty)$ is multiplicative and satisfies $\delta(\mc{L}_f(\omega)) \delta(\mc{S}_f(\omega)) = 0$, for some $\omega \geq 1$. In light of \eqref{eq:ftoreciprocal} we have $\mc{S}_f(\omega) = \mc{L}_{1/f}(\omega)$, so that on replacing $f$ by $1/f$ if needed we may assume that $\delta(\mc{L}_f(\omega)) = 0$. \\
If $g = \log f$ does not have finite concentration then by Lemma \ref{lem:Vfinite}, $V_g(x) \ra \infty$ as $x \ra \infty$. On combining Lemmas \ref{lem:MRclipped} and \ref{lem:UppBd}, we obtain 
$$
\frac{1}{x_k} \sum_{n \leq x_k} |\tilde{g}_{x_k}(n)-\mu(x_k)|^2  \leq C_1 \left(\frac{M}{K^2} + M^2 \log(1/M) +  K^2M^2\right) + o_{k \ra \infty}(1).
$$
On the other hand, by Lemma \ref{lem:LowBd} we obtain the lower bound
$$
\frac{1}{x_k} \sum_{n \leq x_k} |\tilde{g}_{x_k}(n) - \mu(x_k)|^2 \geq c_0 M - C_0 K^2M^2 - o_{k \ra \infty}(1).
$$
Finally, choose $K > 1$ large enough that $C_1/K^2 < c_0/4$, then $M$ small enough that
$$
M \log(1/M) + (C_1 + C_0)K^2M < c_0/4.
$$
On rearranging, we arrive at the estimate
$$
\frac{c_0M}{2} = o_{k \ra \infty}(1),
$$
which is a contradiction as soon as $k$ is sufficiently large. The proof of Theorem \ref{thm:redtoFinConc} is now complete.
\end{proof}
\section{The concentration of the shifts $g(n+1)-g(n)$} \label{sec:conc}
In this section we will prove Proposition \ref{prop:gapConc}.
Let $g$ be a real-valued additive function, let $d \in \mb{N}$ and let $h > 0$. For convenience, given $u \in \mb{R}$ let us define\footnote{We use the standard notation $e(t) := e^{2\pi i t}$, for $t \in \mb{R}$.} the multiplicative function $f_u(n) := e(ug(n))$ and 
$$
M_X(f_u;T,Q) := \inf_{|t| \leq T} \min_{\ss{q \leq Q \\ \chi \pmod{q} \text{ primitive}}} \mb{D}(f_u, \chi(n)n^{it}; X)^2,
$$
where for arithmetic functions $f_1,f_2$ bounded in absolute value by $1$, the \emph{pretentious distance} between $f_1$ and $f_2$ up to $X$ is defined by
$$
\mb{D}(f_1,f_2;X)^2 := \sum_{p \leq X} \frac{1-\text{Re}(f_1(p)\bar{f}_2(p))}{p}.
$$
Since we will need this later, we recall the following properties of $\mb{D}$, see e.g. \cite[Lem. 3.1]{GSPret}. We will refer to any of these as instances of the \emph{triangle inequality} for the pretentious distance:
\begin{align*}
\mb{D}(f_1,f_3;X) &\leq \mb{D}(f_1,f_2;X) + \mb{D}(f_2,f_3;X) \\
\mb{D}(f_1f_2,f_3f_4;X) &\leq \mb{D}(f_1,f_3;X) + \mb{D}(f_2,f_4;X).
\end{align*}
In the sequel we set $Q = Q(X) := (\log X)^{1/125}$. 
All estimates will be allowed to depend on $d$ and $h$, and we will keep this dependence implicit.
\begin{lem}
Let $X \leq T \leq X^2$. Then there is an absolute constant $c > 0$ such that 
$$
\mc{Q}_g(X;d,h) \ll \int_{-1}^{1} e^{-cM_X(f_{u}; T, Q)} du.
$$
\end{lem}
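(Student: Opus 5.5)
The plan is to use the standard Esseen-type smoothing inequality, which bounds a concentration function by an integral of the modulus of a characteristic function. Then I apply the Tao--Teräväinen bound to each of these characteristic functions.

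\emph{Step 1: smoothing.} I fix a nonnegative even function $\psi$ with $\psi \geq 1$ on $[-1/2,1/2]$ and with $\hat{\psi}$ supported in $[-1,1]$, after rescaling by $h$; for instance take $\psi$ to be a suitably scaled Fejér kernel. This choice is why the integral runs over $u \in [-1,1]$, and the dependence on $h$ is absorbed into the implied constant. For every $a \in \mb{R}$,
$$
1_{|g(n+d)-g(n)-a| \leq h/2} \ll \psi\Big(\frac{g(n+d)-g(n)-a}{h}\Big) \ll \int_{-1}^{1} |\hat{\psi}(u)|\, e\big(u(g(n+d)-g(n))\big) e(-ua)\, du ,
$$
after the linear change of variables. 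I weight by $1/n$, sum over $\sqrt{X}<n\le X$, take absolute values inside the integral, and take the supremum over $a$ (which then disappears). This gives
$$
\mc{Q}_g(X;d,h) \ll \int_{-1}^1 \Big|\frac{1}{\log X}\sum_{\sqrt{X}<n\leq X} \frac{f_u(n+d)\bar{f}_u(n)}{n}\Big|\, du .
$$

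\emph{Step 2: binary correlations.} For each fixed $u$, both $f_u$ and $\bar f_u$ are $1$-bounded multiplicative functions. I invoke \cite[Thm. 3.1(ii)]{TT}, the logarithmic two-point correlation bound building on Pilatte's work. With the normalisation $Q=(\log X)^{1/125}$ and a range $T$ between $X$ and $X^2$, it gives
$$
\frac{1}{\log X}\sum_{n\leq X}\frac{f_u(n+d)\bar f_u(n)}{n} \ll \exp\big(-c M_X(f_u;T,Q)\big) + (\log X)^{-c'} ,
$$
uniformly in $u$, with the shift $d$ fixed. The range $n>\sqrt{X}$ differs from $n \leq X$ by applying the same bound at $X$ and at $\sqrt{X}$. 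I will use $M_{\sqrt{X}} \geq M_X - O(1)$, which holds because the primes in $(\sqrt{X},X]$ contribute $O(1)$ to $\mb{D}^2$. Alternatively, the lower portion simply contributes $\tfrac12$ of a log-average that obeys the same estimate.

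\emph{Step 3: the error term.} The additive error $(\log X)^{-c'}$ has to be absorbed. Since $\mb{D}^2 \leq 2\sum_{p\le X}1/p \le 2\log\log X + O(1)$, we have $M_X \le 2\log\log X+O(1)$, so $e^{-cM_X} \gg (\log X)^{-2c}$. Shrinking $c$ so that $2c \le c'$ makes the error term dominated by the main term, and integrating over $|u|\le 1$ yields the lemma.

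The main obstacle is Step 2. It requires checking that the precise form of the Tao--Teräväinen theorem (its normalisation of $Q$, the range of $t$, and the primitivity of the characters) matches $M_X(f_u;T,Q)$ uniformly in $u\in[-1,1]$ and $T\in[X,X^2]$. Where their statement uses a slightly different conductor or $t$-range, I would appeal to the triangle inequality for $\mb{D}$. This converts a twist by $\chi(n)n^{it}$ in their range into one in ours at a cost of $O(1)$ in the distance.
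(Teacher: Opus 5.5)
Your proposal is correct and is essentially the paper's argument: you smooth with the Fej\'er kernel and then apply \cite[Thm. 3.1(ii)]{TT} with $\mathcal{L}=e^{M_X(f_u;T,Q)}$ (the paper pigeonholes down to $h=1$ rather than rescaling, and uses (3.9) of \cite{TT}, which bounds the stronger quantity with absolute values inside dyadic windows $n\sim x$ and implies your log-averaged form). The one point to correct is your fallback: the triangle inequality cannot move a twist $\chi(n)n^{it}$ from a larger conductor or $t$-range into a smaller one at cost $O(1)$ (e.g.\ $\mathbb{D}(n^{it},1;X)^2\gg\log\log X$ for $1\le|t|\le X^2$), so the case $T=X$ must come from the cited source itself; this case is the strongest one and is the one used later to define $Y_k$, and the paper justifies it by noting that the non-pretentious case of \cite{TT} is derived from \cite[Thm. 1.7]{MRT}, which works with threshold $T=X$.
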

\begin{proof}
By replacing $h$ by $\lceil h\rceil$ (which only increases $\mc{Q}_g(X;d,h)$), we may assume that $h \in \mb{N}$. Next, splitting the interval $[a-h/2,a+h/2]$ into unit intervals and using the pigeonhole principle, we have
\begin{align}
\mc{Q}_g(X;d,h) &= \frac{1}{\log X} \sup_{a \in \mb{R}} \sum_{\ss{\sqrt{X} < n \leq X \\ |g(n+d)-g(n)-a| \leq h/2}} \frac{1}{n} \ll \frac{h+1}{\log X} \sup_{a \in \mb{R}} \max_{|j| \leq h} \sum_{\ss{\sqrt{X} < n \leq X \\ |g(n+d)-g(n) - (a+j)| \leq 1/2}} \ll_h \mc{Q}_g(X;d,1), \label{eq:passto1} 
\end{align}
so that in the sequel we may in fact assume that $h = 1$. \\
Let us define 
$$
\mc{I} := \frac{1}{\log X} \int_{-1}^{1} \left(\int_{\sqrt{X}}^X \left|\frac{1}{x}\sum_{x < n \leq 2x} f_u(n+d)\bar{f}_u(n)\right| \frac{dx}{x}\right) du.
$$
First, using the inequality 
$$
1_{[-1/2,1/2]}(\alpha) \leq \frac{\pi^2}{4} \left(\frac{\sin (\pi\alpha)}{\pi \alpha}\right)^2 = \frac{\pi^2}{4}\int_{-1}^1 (1-|u|) e(\alpha u) du,
$$
we obtain 
\begin{align*}
&\frac{1}{\log X} \int_{\sqrt{X}}^X |\{x < n \leq 2x : \ |g(n+d)-g(n) - a| \leq 1/2\} \frac{dx}{x^2} \\
&\leq \frac{\pi^2}{4\log X} \int_{\sqrt{X}}^{X} \int_{-1}^{1} e(-u a) (1-|u|) \left(\sum_{x < n \leq 2x} f_{u}(n+d)\bar{f}_{u}(n)\right) du \frac{dx}{x^2}.
\end{align*}
Taking absolute values and applying the triangle inequality, we see that
$$
\sup_{a \in \mb{R}} \frac{1}{\log X} \int_{\sqrt{X}}^X |\{x < n \leq 2x : \ |g(n+d)-g(n) - a| \leq 1/2\} \frac{dx}{x^2}  \ll \mc{I}.
$$
Furthermore, for each $a$ the LHS above is
$$
\frac{1}{\log X} \sum_{\sqrt{X} < n \leq 2X} 1_{|g(n+d)-g(n)-a| \le 1/2} \int_{n/2}^n \frac{dx}{x^2} \gg \frac{1}{\log X} \sum_{\sqrt{X} < n \leq 2X} \frac{1_{|g(n+d)-g(n) - a| \leq 1/2}}{n}.
$$
It follows from this and \eqref{eq:passto1} that
$$
\mc{Q}_{g}(X;d,h) \ll \mc{I}.
$$
Finally, by\footnote{Strictly speaking, in \cite{TT} the claimed bound holds with $T_0 = X^2$. However, in the non-pretentious case (ii) being considered here, the result in question is derived from \cite[Thm. 1.7]{MRT}, which uses the threshold $T = X$ (and therefore also any $T \geq X$) instead. We will use this added flexibility in the choice of $T$ to our advantage in the sequel.} \cite[Thm. 3.1(ii)]{TT}, in particular (3.9) there, there is a constant $c > 0$ such that for each $u \in [-1,1]$,
$$
\frac{1}{\log X} \int_{\sqrt{X}}^{X}  \left|\sum_{n \sim x} f_{u}(n+d)\bar{f}_{u}(n)\right|   \frac{dx}{x^2}\ll \mc{L}^{-c},
$$
whenever $\log \mc{L} \leq M_X(f_{u};T,Q)$ with $X \leq T \leq X^2$. Of course, we can simply select $\mc{L} = e^{M_X(f_{u};T,Q)}$, which yields
\begin{align} \label{eq:prelim}
\mc{Q}_g(X;d,h) \ll \int_{-1}^1 e^{-cM_X(f_{u};T_0,Q)} du,
\end{align}
as claimed.
\end{proof}
Write $\lambda(S)$ to denote the Lebesgue measure of $S \subseteq \mb{R}$. Let $K,R \geq 1$ be parameters to be chosen later, subject to $1 \leq R \leq (\log\log X)^{1/2}$ and $1 \leq K \leq \log\log\log X$, say. As above, let $X \leq T \leq X^2$. Let us also define $\tilde{Q} := Q^{20R}$, and note trivially that $M_X(f_u; T,Q) \geq M_X(f_u;X^2,\tilde{Q})$ for all $u$, so that the previous lemma also yields
$$
\mc{Q}_g(X;d,h) \ll \int_{-1}^{1} e^{-cM_X(f_u;X^2,\tilde{Q})} du.
$$
In the sequel, for each $u \in [-1,1]$ we select a primitive character $\chi_u$ with conductor $q_u \leq Q$ and $t_u \in [-X,X]$ (breaking ties by selecting $\chi_u$ of maximal conductor and $t_u$ maximally) such that (on taking $T = X$)
$$
M_X(f_u;X,Q) = \mb{D}(f_u,\chi_u n^{it_u};X)^2.
$$
We additionally select a primitive character $\tilde{\chi}_u$ with conductor $\tilde{q}_u \leq \tilde{Q}$ and $\tilde{t}_u \in [-X^2,X^2]$ such that (on taking $T = X^2$)
$$
M_X(f_u;X^2,\tilde{Q}) = \mb{D}(f_u,\tilde{\chi}_u n^{i\tilde{t}_u};X)^2.
$$
Finally, given a Dirichlet character $\psi$, write $\psi^\ast$ to denote the primitive character that induces it, noting that if $q^\ast$ and $q$ are the conductor and modulus of $\psi$, respectively, then for any $1$-bounded function $G$,
\begin{equation}\label{eq:toImprim}
\mb{D}(G,\psi^\ast;X)^2 = \mb{D}(G,\psi;X)^2 + O\left(\sum_{p|q/q^\ast} \frac{1}{p}\right) = \mb{D}(G,\psi;X)^2 + O(\log\log\log(3q)).
\end{equation}
We will employ a method due to Ruzsa \cite{Ruzsa} (elaborating on previous work of Hal\'{a}sz \cite{Halasz}) to study the dependence on $u$ of the parameters $t_u$ and $\chi_u$. The treatment of $\chi_u$ is novel in this application. \\
For each $k \geq 1$ we consider the set
$$
Y_k := \{u \in [-1,1] : \ \mb{D}(f_u,\chi_u n^{it_u};X) \leq k\}.
$$
Since $Y_k \subseteq Y_{k+1}$ and $\bigcup_{k \geq 1} Y_k = [-1,1]$, it follows that there is a minimal $k_0$ such that $\lambda(Y_k) \geq 1/R$ for all $k \geq k_0$. By the previous lemma, provided $K \geq k_0$ we therefore have
\begin{align} \label{eq:tildeQgBd}
\mc{Q}_g(X;d,h) \ll \sum_{k_0 \leq k \leq K} e^{-ck^2} \lambda(Y_k) + e^{-cK^2} + \frac{1}{R}.
\end{align}
We will need an upper bound for $\lambda(Y_k)$. In the sequel, given $k_0 \leq k \leq K$ we define
\begin{equation}\label{eq:rkEst}
r_k := \lceil 12/\lambda(Y_k)\rceil,
\end{equation}
noting that $r_k \leq 20 R$ for $k \geq k_0$. Write $A+B := \{a+b : a \in A, \ b \in B\}$, and inductively $mA := (m-1)A + A$. By \cite[Lem. 5.3]{Ruzsa}, we have $[-1,1] \subseteq r_k Y_k$.
It follows that for every $u \in [-1,1]$ there exist $u_1,\ldots,u_{r_k} \in Y_k$ such that
\begin{equation*}
u = u_1 + \cdots + u_{r_k}.
\end{equation*}
Therefore, given $u \in [-1,1]$ we may define
\begin{equation}\label{eq:mfqkDef}
\mf{q}_k(u) := \min\{\tilde{q}_u \prod_{1 \leq j \leq r_k} q_{u_j} : \ u = u_1 + \cdots + u_{r_k}, \ u_j \in Y_k \text{ for all } 1 \leq j \leq r_k\}.
\end{equation}
Noting that $\mf{q}_k(u) \leq \tilde{Q}$ for all $u \in [-1,1]$, we may thus define
$$
\mf{q}_k := \max\{\mf{q}_k(u) : \ u \in [-1,1]\}.
$$

\begin{lem}\label{lem:UppBdMX}
For every $k \geq k_0$ we have
$$
\max_{u \in [-1,1]} M_X(f_u; X^2,\tilde{Q}) \leq (r_k k)^2 + O(\log\log\log (10\mf{q}_k)).
$$
\end{lem}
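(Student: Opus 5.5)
Fix $k \geq k_0$ and $u \in [-1,1]$. The plan is to exhibit one twisted character, built from an additive decomposition $u = u_1 + \cdots + u_{r_k}$, whose modulus has conductor at most $\tilde{Q}$ and whose $t$-parameter lies in $[-X^2,X^2]$, and whose distance to $f_u$ is at most $r_k k$ up to the imprimitivity error. That character is then an admissible competitor in the infimum defining $M_X(f_u;X^2,\tilde{Q})$.

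Take the decomposition achieving the minimum in \eqref{eq:mfqkDef}. This is possible since $[-1,1]\subseteq r_kY_k$ by \cite[Lem. 5.3]{Ruzsa}. Each $u_j \in Y_k$. Since $g$ is additive, $f_u = \prod_{j} f_{u_j}$ exactly, because $e(ug(n)) = \prod_j e(u_jg(n))$.

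Consider the character $\psi := \prod_{j} \chi_{u_j}$. It is a Dirichlet character modulo $\prod_j q_{u_j}$, and since $\tilde{q}_u \geq 1$ this modulus is $\leq \mf{q}_k(u) \leq \mf{q}_k$. Also set $t := \sum_j t_{u_j}$, so that $|t| \leq r_k X \leq 20RX \leq X^2$ for large $X$.

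The iterated triangle inequality for $\mb{D}$ (the multiplicative form, applied $r_k-1$ times) gives
$$
\mb{D}(f_u, \psi\, n^{it};X) \leq \sum_{1\le j\le r_k} \mb{D}(f_{u_j}, \chi_{u_j} n^{it_{u_j}};X) \leq r_k k .
$$
Let $\psi^\ast$ be the primitive character inducing $\psi$. Its conductor is at most $\prod_j q_{u_j} \leq Q^{r_k} \leq Q^{20R} = \tilde{Q}$, so it is admissible. By \eqref{eq:toImprim}, applied with $G = f_u n^{-it}$,
$$
\mb{D}(f_u,\psi^\ast n^{it};X)^2 = \mb{D}(f_u,\psi n^{it};X)^2 + O\Big(\sum_{p\mid \mathrm{mod}(\psi)} \tfrac1p\Big) \leq (r_kk)^2 + O(\log\log\log(10\mf{q}_k)).
$$
Here the divisor sum is taken over a modulus $\leq \mf{q}_k$, which gives the $\log\log\log(10\mf q_k)$ bound.

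Taking the infimum defining $M_X(f_u;X^2,\tilde{Q})$, which is at most this particular choice, and then the max over $u$ gives the claim. The only delicate points are bookkeeping: the modulus of the product character is bounded by the product $\prod_j q_{u_j}$ and not by the quantity including $\tilde{q}_u$, which is harmless since $\mf{q}_k(u)$ dominates it; and the range of $t$ fits inside $[-X^2,X^2]$. I expect no genuine obstacle beyond ensuring that $r_k\le 20R$ keeps both the conductor and $|t|$ within the allowed ranges.
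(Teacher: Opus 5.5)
Your proposal is correct and follows the same route as the paper. You take a minimising representation $u=u_1+\cdots+u_{r_k}$ from \eqref{eq:mfqkDef}, write $f_u=\prod_j f_{u_j}$, and apply the pretentious triangle inequality against $\prod_j\chi_{u_j}\,n^{i\sum_j t_{u_j}}$. You then check that the conductor is at most $Q^{r_k}\le\tilde Q$ and that $|t|\le X^2$, and pass to the primitive inducing character via \eqref{eq:toImprim}. Your bound $|\sum_j t_{u_j}|\le r_kX\le 20RX$ is slightly more careful than the paper's stated $RX$.
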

\begin{proof}
Let $u \in [-1,1]$. By the above remarks, we select a representation
\begin{equation}\label{eq:repYk}
u = u_1 + \cdots + u_{r_k}, \quad u_j \in Y_k
\end{equation}
that achieves the minimum in \eqref{eq:mfqkDef}. Since $f_u = f_{u_1} \cdots f_{u_{r_k}}$, 
it follows by the pretentious triangle inequality that
\begin{align}\label{eq:passtoreps}
\mb{D}(f_u, \chi_{u_1} \cdots \chi_{u_{r_k}} n^{i(t_{u_1} + \cdots + t_{u_k})};X) \leq \sum_{1 \leq j \leq r_k} \mb{D}(f_{u_j},\chi_{u_j}n^{it_{u_j}};X) \leq r_k k.
\end{align}
As $R \leq \sqrt{X}$, for large enough $X$ we have
$$
|t_{u_1} + \cdots + t_{u_{r_k}}| \leq R X < X^2,
$$
and also that $\chi_{u_1}\cdots \chi_{u_{r_k}}$ has conductor bounded above by 
$$
q_{u_1}\cdots q_{u_{r_k}} \leq Q^{r_k} \leq Q^{20R} = \tilde{Q}.
$$ 
By minimality and \eqref{eq:toImprim}, we have
\begin{align}\label{eq:minBd}
M_X(f_u;X^2,\tilde{Q}) 
&\leq \mb{D}(f_u,(\chi_{u_1}\cdots \chi_{u_{r_k}})^\ast n^{i(t_{u_1} + \cdots + t_{u_{r_k}})};X)^2 \nonumber\\
&\leq \mb{D}(f_u,\chi_{u_1}\cdots \chi_{u_{r_k}} n^{i(t_{u_1} + \cdots + t_{u_{r_k}})};X)^2 + O(\log\log\log(10\mf{q}_k)) \nonumber\\
&\leq (r_k k)^2 + O(\log\log\log(10\mf{q}_k)).
\end{align}
Since this holds for \emph{all} $u \in [-1,1]$, the claim follows.
\end{proof}
\begin{lem}\label{lem:Cauchy}
Assume that $RK < \tfrac{1}{400}\sqrt{\log\log X}$. Then there is a constant $C > 0$ such that if $u_1,u_2, u_1 + u_2 \in [-1,1]$ then
$$
\tilde{\chi}_{u_1+u_2} = (\tilde{\chi}_{u_1}\tilde{\chi}_{u_2})^{\ast}, \quad |\tilde{t}_{u_1+u_2} - \tilde{t}_{u_1} - \tilde{t}_{u_2}| \ll \frac{e^{9(r_kk)^2}(\log\log(10\mf{q}_k))^{C}}{\log X}.
$$
\end{lem}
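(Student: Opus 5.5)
We will compare the three optimal twists $\tilde{\chi}_{u_1}n^{i\tilde{t}_{u_1}}$, $\tilde{\chi}_{u_2}n^{i\tilde{t}_{u_2}}$ and $\tilde{\chi}_{u_1+u_2}n^{i\tilde{t}_{u_1+u_2}}$ through the pretentious triangle inequality, and then apply a repulsion estimate. The repulsion estimate says that two distinct twists $\psi_1 n^{it_1}$ and $\psi_2 n^{it_2}$ of small conductor cannot both be close to the same function unless $\psi_1\bar{\psi}_2$ is principal and $t_1$ is essentially equal to $t_2$.

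\textbf{Step 1 (distance bound).} Since $f_{u_1+u_2} = f_{u_1}f_{u_2}$, the triangle inequality gives
$$
\mb{D}\bigl(\tilde{\chi}_{u_1+u_2}n^{i\tilde{t}_{u_1+u_2}}, \tilde{\chi}_{u_1}\tilde{\chi}_{u_2} n^{i(\tilde{t}_{u_1}+\tilde{t}_{u_2})};X\bigr) \leq \mb{D}(f_{u_1+u_2},\tilde{\chi}_{u_1+u_2}n^{i\tilde{t}_{u_1+u_2}};X) + \mb{D}(f_{u_1},\tilde{\chi}_{u_1}n^{i\tilde{t}_{u_1}};X)+\mb{D}(f_{u_2},\tilde{\chi}_{u_2}n^{i\tilde{t}_{u_2}};X).
$$
Lemma \ref{lem:UppBdMX} bounds each term on the right by $\bigl((r_kk)^2+O(\log\log\log(10\mf{q}_k))\bigr)^{1/2}$. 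Squaring, the distance squared is at most $9(r_kk)^2 + O(\log\log\log(10\mf{q}_k))$.

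\textbf{Step 2 (unwinding the characters).} Set $\psi := \bar{\tilde{\chi}}_{u_1+u_2}\tilde{\chi}_{u_1}\tilde{\chi}_{u_2}$ and $\tau := \tilde{t}_{u_1}+\tilde{t}_{u_2}-\tilde{t}_{u_1+u_2}$. Then Step 1 reads
$$
\mb{D}(1,\psi n^{i\tau};X)^2 \leq 9(r_kk)^2 + O(\log\log\log(10\mf{q}_k)).
$$
Here $\psi$ has modulus at most $\tilde{Q}^3$ and $|\tau|\leq 3X^2$.

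We use the standard lower bound for a non-principal character $\psi$ of modulus $q$:
$$
\mb{D}(1,\psi n^{i\tau};X)^2 \geq \Bigl(\tfrac{1}{3}-o(1)\Bigr)\log\log X - O(\log\log q).
$$
This follows from Vinogradov–Korobov zero-free regions, or the Granville–Soundararajan repulsion bound, uniformly for $|\tau| \leq X^2$ at the cost of a $\log\log |\tau|$-type loss, which we must control.

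Since $\log\log\tilde{Q} \asymp \log(R\log\log X)$ and $(r_kk)^2 \leq (20RK)^2 < \tfrac{1}{400}\log\log X$ by hypothesis, the two bounds contradict each other. Hence $\psi$ is principal, which gives $\tilde{\chi}_{u_1+u_2} = (\tilde{\chi}_{u_1}\tilde{\chi}_{u_2})^\ast$ after comparing primitive characters.

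\textbf{Step 3 (the $t$-estimate).} Once $\psi$ is principal, \eqref{eq:toImprim} reduces the bound to
$$
\mb{D}(1,n^{i\tau};X)^2 \leq 9(r_kk)^2 + O(\log\log\log(10\mf{q}_k)) + O(\log\log\log \tilde{Q}).
$$
We then use the classical estimate
$$
\mb{D}(1,n^{i\tau};X)^2 = \log\bigl(1+|\tau|\log X\bigr) + O(1) \quad \text{for } |\tau|\leq X^2,
$$
which follows from the prime number theorem with the Vinogradov–Korobov error term. Exponentiating gives
$$
1+|\tau|\log X \ll e^{9(r_kk)^2}(\log\log(10\mf{q}_k))^{C},
$$
which is the claimed bound on $|\tau|$.

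\textbf{Main obstacle.} The hard step is the uniform lower bound in Step 2 when $|\tau|$ is as large as $X^2$. The standard lower bound $\log(1+|\tau|\log X)$ for $\mb{D}(1,n^{i\tau};X)^2$ is only available for $|\tau|$ up to roughly $\exp((\log X)^{3/2-o(1)})$, and it is uniform there. Beyond that range we get only the weaker lower bound $\geq(1/3-o(1))\log\log X$, coming from the zero-free region near the $1$-line. That weaker bound still beats $9(r_kk)^2$ under the assumption $RK<\tfrac{1}{400}\sqrt{\log\log X}$, and this is exactly where that numerical hypothesis is needed.

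The character case requires the analogous zero-free region for $L(s,\psi)$ with modulus up to $\tilde{Q}^3 = (\log X)^{O(R)}$. A possible Siegel zero must be excluded or handled, and it only affects one real character. I would first try the Granville–Soundararajan style bound $\mb{D}(1,\psi n^{i\tau};X)^2\geq (1/2-o(1))\log\log X$ for non-exceptional $\psi$. For an exceptional real $\psi$, I would use the choice of maximal conductor and of $\tilde{t}_u$ to rule it out.
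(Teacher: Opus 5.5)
Your route is the paper's. Its proof first compares $\tilde{\chi}_u n^{i\tilde t_u}$ with $\chi_{u_1}\cdots\chi_{u_{r_k}}n^{i(t_{u_1}+\cdots+t_{u_{r_k}})}$, a step the statement does not actually require. It then ends with exactly your argument: the three-term triangle inequality giving $9(r_kk)^2+O(\log\log\log(10\mf{q}_k))$, the repulsion bound of \cite[Lem.~7.8]{KMTPLMS} to force $\psi$ principal, and a split into $|t|\geq 1$ versus $|t|\leq 1$. Two of your steps, however, are wrong as written and need repair.

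\emph{The $\tau$-estimate.} The formula $\mb{D}(1,n^{i\tau};X)^2=\log(1+|\tau|\log X)+O(1)$ holds only for $|\tau|\ll 1$, not for $|\tau|\leq X^2$. The left side never exceeds $2\log\log X+O(1)$, while the right side is already about $3\log\log X$ at $|\tau|=(\log X)^2$. For $1\leq|\tau|\leq 3X^2$ one only has $\mb{D}(1,n^{i\tau};X)^2\geq(\tfrac13-o(1))\log\log X$, which follows from $|\zeta(1+\tfrac{1}{\log X}+i\tau)|\ll(\log(|\tau|+3))^{2/3}$. Your ``Main obstacle'' ranges are therefore garbled: $\exp((\log X)^{3/2})$ is where the Vinogradov--Korobov lower bound $\log\log X-\tfrac23\log\log|\tau|$ becomes trivial, not where the $\log(1+|\tau|\log X)$ formula stops. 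The correct order is the paper's. First use the $\tfrac13$ (or $\tfrac14$) bound against $9(r_kk)^2<\tfrac{9}{400}\log\log X$ to exclude $|\tau|\geq 1$. Only then exponentiate $\log(1+|\tau|\log X)+O(1)$.

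\emph{The conductor loss.} In Step 3 you bound the modulus of $\psi$ by $\tilde{Q}^3$, which produces an error $O(\log\log\log\tilde{Q})$. You then silently drop this term when exponentiating. If kept, it costs a factor $(\log\log\tilde{Q})^{O(1)}\asymp(\log\log\log X)^{O(1)}$. That factor is not in the statement, and it would not disappear when $\mf{q}_k=1$ is used in Lemma \ref{lem:complambda}. The fix is that \eqref{eq:mfqkDef} gives $\tilde{q}_u\leq\mf{q}_k(u)\leq\mf{q}_k$ for every $u$. Hence $\psi$ has modulus at most $\mf{q}_k^3$, and the loss is only $O(\log\log\log(10\mf{q}_k))$.

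\emph{Siegel zeros.} Your worry here is unfounded, and the proposed appeal to maximal conductor is unnecessary. Since $\mb{D}(1,\psi n^{i\tau};X)^2=\log\log X-\log|L(1+\tfrac{1}{\log X}+i\tau,\bar{\psi})|+O(1)$, the lower bound needs only an upper bound for $|L|$. An exceptional zero makes $L$ small, which only increases the distance.
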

\begin{proof}
By Lemma \ref{lem:UppBdMX} and the Cauchy-Schwarz inequality, we obtain that whenever $u$ has a representation \eqref{eq:repYk}, 
\begin{align} \label{eq:compchiu}
\mb{D}(\tilde{\chi}_{u} n^{i\tilde{t}_u}, \chi_{u_1} \cdots \chi_{u_{r_k}} n^{i(t_{u_1} + \cdots + t_{u_k})};X)^2 &\leq 2\left(\mb{D}(f_u,\tilde{\chi}_u n^{i\tilde{t}_u};X)^2 + r_k\sum_{1 \leq j \leq r_k} \mb{D}(f_{u_j},\chi_{u_j} n^{it_{u_j}};X)^2\right)  \nonumber\\
&\leq 4(r_k k)^2 + O(\log\log\log(10\mf{q}_k)).
\end{align}
Now we recall from \cite[Lem. 7.8]{KMTPLMS} that if $\tilde{\chi}_u$ is not induced by $\chi_{u_1} \cdots \chi_{u_{r_k}}$ then
$$
\inf_{|t| \leq 3X^2} \mb{D}(\bar{\tilde{\chi}}_u \chi_{u_1}\cdots \chi_{u_{r_k}}, n^{it};X)^2 \geq \frac{1}{4}\log\left(\frac{\log X}{\log q}\right) + O(1).
$$
where $q$ is the conductor of the character $\bar{\tilde{\chi}}_u \chi_{u_1} \cdots \chi_{u_{r_k}}$. As before,
$$
q \leq \tilde{q}_u \prod_{1 \leq j \leq r_k} q_{u_j}\leq \mf{q}_k \leq Q^{40R}.
$$
Since $Q = (\log X)^{1/125}$ and $R \leq (\log\log X)^{1/2}$, we deduce in this case that
\begin{equation}\label{eq:repulsion}
\inf_{|t| \leq 3X^2} \mb{D}(\bar{\tilde{\chi}}_u \chi_{u_1}\cdots \chi_{u_{r_k}}, n^{it};X)^2 \geq \left(\frac{1}{4} - o(1)\right) \log\log X.
\end{equation}
Otherwise, if $\tilde{\chi}_u$ is induced by $\chi_{u_1}\cdots \chi_{u_{r_k}}$ then either $|t| \geq 1$ and the same bound \eqref{eq:repulsion} holds, or else $0 \leq |t| \leq 1$ and
\begin{equation}\label{eq:nearby}
\mb{D}(\bar{\tilde{\chi}}_u \chi_{u_1}\cdots \chi_{u_{r_k}}, n^{it};X)^2 = \log(1+|t|\log X) + O(\log\log(3\omega(q))) = \log(1+|t|\log X) + O(\log\log\log(10\mf{q}_k)),
\end{equation}
(using the trivial bound $\omega(q) \leq \log q$).  Since for $k_0 \leq k \leq K$,
$$
(r_k k)^2 \leq (20RK)^2 < \frac{1}{400}\log\log X,
$$
we conclude from \eqref{eq:compchiu} and \eqref{eq:nearby} that for some $C > 0$,
$$
\tilde{\chi}_u = (\chi_{u_1}\cdots \chi_{u_{r_k}})^\ast, \quad \tilde{t}_u = t_{u_1} + \cdots + t_{u_{r_k}} + O\left(\frac{e^{4(r_kk)^2}(\log\log(10\mf{q}_k)^{C}}{\log X}\right).
$$
Whenever $u_1,u_2,u_1+u_2 \in [-1,1]$, we obtain again Lemma \ref{lem:UppBdMX} that 
\begin{align*}
&\mb{D}(\tilde{\chi}_{u_1 + u_2} \bar{\tilde{\chi}}_{u_1} \bar{\tilde{\chi}}_{u_2}, n^{i(\tilde{t}_{u_1 + u_2} - \tilde{t}_{u_1} - \tilde{t}_{u_2})}; X)^2 \\
&\leq 3\left(\mb{D}(f_{u_1+ u_2}, \tilde{\chi}_{u_1+u_2} n^{i\tilde{t}_{u_1+u_2}};X)^2 + \mb{D}(f_{u_1},\tilde{\chi}_{u_1} n^{i\tilde{t}_{u_1}};X)^2 + \mb{D}(f_{u_2}, \tilde{\chi}_{u_2} n^{i\tilde{t}_{u_2}};X)^2\right) \\
&\leq 9(r_k k)^2 + O(\log\log\log(10\mf{q}_k)),
\end{align*}
and arguing as just above (increasing $C$ as needed), since $(r_kk)^2 \leq \frac{1}{400} \log\log X$ we find
$$
\tilde{\chi}_{u_1+u_2} = (\tilde{\chi}_{u_1} \tilde{\chi}_{u_2})^{\ast}, \quad \tilde{t}_{u_1+u_2} = \tilde{t}_{u_1} + \tilde{t}_{u_2} + O\left(\frac{e^{9(r_k k)^2}(\log\log(10\mf{q}_k))^{C}}{\log X}\right),
$$
as claimed.
\end{proof}
\begin{lem} \label{lem:complambda}
For each $u \in [-1,1]$, $\tilde{\chi}_u$ is trivial. Moreover, there is a $\lambda \in [-X^2,X^2]$ such that for each $u \in [-1,1]$,
$$
\tilde{t}_u = \lambda u + O\left(\frac{e^{9(r_kk)^2}}{\log X}\right).
$$
In particular, whenever $k_0 \leq k \leq K$ we have
\begin{equation}\label{eq:complambda}
\mb{D}(f_u, n^{i\lambda u}; X)^2 \ll (r_k k)^2.
\end{equation}
\end{lem}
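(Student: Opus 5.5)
Lemma \ref{lem:Cauchy} says that $u\mapsto \tilde\chi_u$ is multiplicative with respect to addition in $u$, and that $u \mapsto \tilde t_u$ is additive in $u$ up to an error $\eta := e^{9(r_kk)^2}(\log\log(10\mf{q}_k))^C/\log X$. The plan is to feed both facts into a Cauchy-equation argument on $[-1,1]$.

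\textbf{Step 1: the characters are trivial.} Fix $u \in [-1,1]$ and an integer $m \geq 1$ with $|u| \leq 1$ throughout. Iterating Lemma \ref{lem:Cauchy} along $u/m, 2u/m, \dots, u$ gives $\tilde\chi_u = (\tilde\chi_{u/m}^m)^\ast$. Take $m = \mathrm{ord}(\tilde\chi_{u/m})$, or more cleanly take $m$ to be a multiple of $\prod_{q\le \tilde Q}\phi(q)$. Then $\tilde\chi_{u/m}^m$ is principal, so $\tilde\chi_u$ is trivial.

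This is not quite legitimate as stated, because $m$ must not depend on $u/m$. The fix is to note that every $\tilde\chi_v$ has conductor at most $\tilde Q$, so its order divides $L := \mathrm{lcm}_{q\le\tilde Q}\phi(q)$. Hence $\tilde\chi_u = (\tilde\chi_{u/L}^{L})^\ast = 1$. The number of iterations is huge, but that is harmless for characters, since the character identity in Lemma \ref{lem:Cauchy} is exact.

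\textbf{Step 2: the linear drift $\lambda$.} Iterating the $t$-relation $m$ times accumulates an error $m\eta$, so here I must use few iterations. I would use a dyadic argument instead. Set $\lambda := \tilde t_1$, and compare $\tilde t_{u}$ with $2\tilde t_{u/2}$: the difference is $O(\eta)$. Then $v\mapsto \tilde t_v - \lambda v$ is a bounded-error additive function on $[-1,1]$ with $|\tilde t_v|\le X^2$. The standard Hyers--Ulam stability argument gives
\[
\tilde t_u = \lambda' u + O(\eta)
\]
for some $\lambda'$, namely $\lambda' = \lim_{n} 2^n\tilde t_{2^{-n}}$; the error sums geometrically to $O(\eta)$. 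It then remains to check $|\lambda'| \leq X^2$, or simply to rename $\lambda'$ as $\lambda$, and to absorb the factor $(\log\log \mf{q}_k)^C$ into the exponential, enlarging the constant if needed. I expect this step to be the main technical point: the limit must be taken carefully so that errors do not multiply by $2^n$. The $2^{-n}$ weighting handles exactly this.

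\textbf{Step 3: deducing \eqref{eq:complambda}.} By Lemma \ref{lem:UppBdMX},
\[
\mb{D}(f_u,n^{i\tilde t_u};X)^2 \le (r_kk)^2 + O(\log\log\log(10\mf{q}_k)).
\]
Next, $\mb{D}(n^{i\tilde t_u}, n^{i\lambda u};X)^2 \le \log(1+|\tilde t_u-\lambda u|\log X)+O(1)$, which is $O((r_kk)^2)$. The triangle inequality then gives \eqref{eq:complambda}; the $\log\log\log\mf{q}_k$ term is $O((r_kk)^2)$ once $k\ge1$, up to adjusting the implied constant.
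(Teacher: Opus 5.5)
Your Step 1 is correct, and shorter than the paper's route. The character identity in Lemma~\ref{lem:Cauchy} is exact, so iterating it along $u/L,2u/L,\dots,u$ with $L=\mathrm{lcm}_{q\le\tilde Q}\varphi(q)$ gives $\tilde\chi_u=(\tilde\chi_{u/L}^{L})^\ast=1$ at once. The paper instead uses pigeonhole, measurability of a level set, Steinhaus' lemma and iterated sumsets.

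\textbf{Step 2 does not work as written.} Put $a_n:=2^n\tilde t_{2^{-n}}$. Then $a_{n+1}-a_n=2^n\bigl(2\tilde t_{2^{-n-1}}-\tilde t_{2^{-n}}\bigr)$, so the increments are $O(2^n\eta)$. Rescaling towards $0$ multiplies the additivity defect by $2^n$ rather than dividing it, and the limit need not exist. For example, $\tilde t_v=\lambda v+b(v)$ with $b$ arbitrary of size at most $\eta/3$ satisfies your hypothesis, yet $a_n=\lambda+2^nb(2^{-n})$. Hyers' scaling $2^{-n}F(2^nu)$, where the errors do sum geometrically, needs the whole line, which you do not have on $[-1,1]$. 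You need a genuine stability lemma for approximately additive functions on a bounded interval. The paper invokes \cite[Lem.~5.11]{Ruzsa} with $\lambda=\tilde t_1$, which also gives $|\lambda|\le X^2$ for free. Alternatively:
\begin{itemize}
\item extend $\psi(u):=\tilde t_u-u\tilde t_1$ $1$-periodically from $[0,1)$ to $\mb{R}$; because $\psi(1)=0$, the extension still has additivity defect $O(\eta)$;
\item Hyers--Ulam on $\mb{R}$ then produces an additive function within $O(\eta)$ of it;
\item that additive function is bounded (since $|\tilde t_u|\le X^2$), hence zero.
\end{itemize}

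\textbf{The second gap is the treatment of $\mf{q}_k$.} You cannot absorb $(\log\log(10\mf{q}_k))^C$ into $e^{9(r_kk)^2}$, nor is $\log\log\log(10\mf{q}_k)=O((r_kk)^2)$. A priori $\mf{q}_k$ is only bounded by a power of $\tilde Q=Q^{20R}$, which grows with $X$, while $r_kk$ can stay bounded. So you would not get absolute implied constants, either in the approximation of $\tilde t_u$ or in \eqref{eq:complambda}.

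What is missing is a proof that $\mf{q}_k=1$. Triviality of the $\tilde\chi_u$ does not give this, because $\mf{q}_k$ also involves the conductors $q_{u_j}$ of the characters $\chi_{u_j}$, $u_j\in Y_k$. These come from the separate minimisation with $T=X$ and $q\le Q$. The paper shows they are trivial by repulsion. For $u\in Y_k$, both $\chi_un^{it_u}$ and $n^{i\tilde t_u}$ are close to $f_u$, so $\mb{D}(\chi_u,n^{i(t_u-\tilde t_u)};X)^2\le(\tfrac{1}{100}+o(1))\log\log X$. On the other hand, \cite[Lem.~7.8]{KMTPLMS} gives at least $(\tfrac14-o(1))\log\log X$ if $\chi_u$ is non-trivial. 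Hence $\mf{q}_k=1$, and the $(\log\log(10\mf{q}_k))^C$ and $\log\log\log(10\mf{q}_k)$ terms become $O(1)$. Only then do your Steps 2 and 3 give the stated bounds.
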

\begin{proof}
Applying the previous lemma inductively, we find that for each $m \geq 1$, whenever $u_1,\ldots,u_m, u_1+ \cdots +u_m \in [-1,1]$ we have
\begin{equation}\label{eq:CauchyChar}
\tilde{\chi}_{u_1+\cdots+u_m} = (\tilde{\chi}_{u_1} \cdots \tilde{\chi}_{u_m})^{\ast}.
\end{equation}
Since there are at most finitely many primitive characters with conductor at most $\tilde{Q}$, by the pigeonhole principle there is a primitive character $\psi$ with conductor at most $\tilde{Q}$ such that 
$$
\mc{S} := \{u \in [-1,1] : \tilde{\chi}_u = \psi\}
$$
has positive outer Lebesgue measure, and it follows that whenever $v \in (\mc{S}-\mc{S}) \cap [-1,1]$, $\tilde{\chi}_v$ is principal and primitive, thus trivial. \\
Next, we check that $\mc{S}$ is in fact measurable, so that its Lebesgue measure, which is equal to its outer measure, is also positive. We note first of all that since $K \leq \log\log\log X$, whenever $k_0 \leq k \leq K$, \cite[Lem. 7.8]{KMTPLMS} shows that if $\tilde{\chi}_u$ and $\tilde{\chi}_u'$ were distinct primitive characters with
$$
M_X(f_u;X^2,\tilde{Q}) = \mb{D}(f_u,\tilde{\chi}_un^{it_u};X)^2 = \mb{D}(f_u,\tilde{\chi}'_u n^{it'_u};X)^2 \leq k^2
$$
then by the triangle inequality,
$$
\left(\frac{1}{4}-o(1)\right) \log\log X \leq \mb{D}(\tilde{\chi}_u \bar{\tilde{\chi}'_u}, n^{i(t_u-t'_u)};X)^2 \leq 4k^2 = o(\log\log X),
$$
which is a contradiction for large $X$. Thus, $\tilde{\chi}_u$ is uniquely determined for each $u$. Now, write $\Xi_{\tilde{Q}}$ to denote the set of primitive characters of conductor at most $\tilde{Q}$, and for each $\psi' \in \Xi_{\tilde{Q}}$ set 
$$
m_{\psi'}(u) := \min_{|t| \leq X^2} \mb{D}(f_u,\psi' n^{it};X)^2.
$$
Then by definition, 
\begin{align*}
\mc{S} = \{u \in [-1,1] : \ m_{\psi}(u) = \min_{\psi' \in \Xi_{\tilde{Q}}} m_{\psi'}(u)\},
\end{align*}
and since the mapping $u \mapsto m_{\psi'}(u)$ is continuous in $u$ for each $\psi'$, $\mc{S}$ is the level set of the minimum of a finite set of continuous functions, therefore measurable. \\ 
Having shown $\mc{S}$ is measurable, we deduce that $\mc{S}$ has positive Lebesgue measure. By Steinhaus' lemma \cite{Steinhaus}, there is a non-empty open subinterval $I \subseteq (\mc{S}-\mc{S}) \cap [-1,1]$, and $\tilde{\chi}_v$ is trivial for all $v \in I$. Taking iterated sumsets, we can find $m \in \mb{N}$ such that $mI$ contains $[-1,1]$. But every element $v \in mI$ is of the shape $v = v_1 + \cdots v_m$ with $\tilde{\chi}_{v_j}$ principal, and therefore by \eqref{eq:CauchyChar}, $\tilde{\chi}_v$
is trivial as well. This proves the first claim. \\
Next, we show that $\mf{q}_k = 1$ for each $k_0 \leq k \leq K$. Let $u \in Y_k$. Using the conclusion that $\tilde{\chi}_u$ is trivial and applying the triangle inequality and Lemma \ref{lem:UppBdMX}, we have
\begin{align*}
\mb{D}(\chi_u, n^{i(t_u - \tilde{t}_u)};X)^2 &\leq 2(\mb{D}(f_u,\chi_u n^{it_u};X)^2 + \mb{D}(f_u,n^{i\tilde{t}_u};X)^2) \leq 4(r_kk^2) + O(\log\log\log(10\mf{q}_k)) \\
&< \left(\frac{1}{100} + o(1)\right) \log\log X.
\end{align*}
On the other hand, another application of \cite[Lem. 7.8]{KMTPLMS} gives that if $\chi_u$ were non-trivial for some $u \in Y_k$ then as $|t_u - \tilde{t}_u| \leq 2X^2$ and $q_u \leq Q \leq \log X$,
$$
\mb{D}(\chi_u,n^{i(t_u-\tilde{t}_u)};X)^2 \geq \left(\frac{1}{4} + o(1)\right)\log \log X,
$$
which is a contradiction. Therefore, $\chi_u$ is trivial for all $u \in Y_k$ as well. By \eqref{eq:mfqkDef}, $\mf{q}_k(u) = 1$ for all $u \in [-1,1]$, and therefore $\mf{q}_k = 1$.\\
We may thus improve the $t_u$ aspect of Lemma \ref{lem:Cauchy} to
$$
\max_{\ss{u_1,u_2 \in [-1,1]: \\ u_1 + u_2 \in [-1,1]}} |\tilde{t}_{u_1 + u_2} - \tilde{t}_{u_1} - \tilde{t}_{u_2}| \ll \frac{e^{9(r_kk)^2}}{\log X}.
$$
Applying Lemma 5.11 of \cite{Ruzsa} with $K$ (in the notation there) given by $e^{9(r_kk)^2}/\log X$, we deduce that there is $\lambda = \tilde{t}_1 \in [-X^2,X^2]$ such that 
$$
\max_{u \in [-1,1]} |\tilde{t}_u - \lambda u| = O\left(\frac{e^{9(r_kk)^2}}{\log X}\right),
$$
and therefore, using \eqref{eq:nearby} (recalling once again that $\mf{q}_k = 1$) and Lemma \ref{lem:UppBdMX}, that
$$
\mb{D}(f_u,n^{i\lambda u};X) \leq \mb{D}(f_u,n^{i\tilde{t}_u};X)  + \mb{D}(n^{i\lambda u}, n^{i\tilde{t}_u};X) \ll r_k k,
$$
and the claim follows.
\end{proof}
\begin{proof}[Proof of Proposition \ref{prop:gapConc}]
By Lemma \ref{lem:complambda},
$$
\int_{-1}^1 \mb{D}(f_u,n^{i\lambda u};X)^2 du \ll (r_k k)^2.
$$
Set $\lambda_0 := \lambda/2\pi$. We may explicitly compute the LHS as
$$
\int_{-1}^1 \mb{D}(f_u, n^{i\lambda u}; X)^2 du = \sum_{p \leq X} \frac{1}{p} \int_{-1}^1 (1- \cos(2\pi u(g(p)-\lambda_0 \log p))) du = 2\sum_{p \leq X} \frac{1}{p}\left(1-\text{sinc}(2\pi (g(p)-\lambda_0 \log p))\right),
$$
where we have written $\text{sinc}(t) := (\sin t)/t$ for $t \neq 0$, and $\text{sinc}(0) = 1$. From the estimate
$$
1-\text{sinc}(\alpha) \asymp \min\{1,\alpha^2\},
$$
we deduce that
$$
\int_{-1}^1 \mb{D}(f_u, n^{i\lambda u}; X)^2 du \asymp \sum_{p \leq X} \frac{\min\{1,(g(p)-\lambda_0 \log p)^2\}}{p} = U_g(X;\lambda_0).
$$
We deduce therefore that for every $k_0 \leq k \leq K$,
$$
U_g(X) = \min_{|\lambda'| \leq X^2} U_g(X;\lambda') \ll (r_k k)^2.
$$
Combined with \eqref{eq:rkEst}, this yields
\begin{equation}\label{eq:lambdaYkbd}
\lambda(Y_k) \ll \frac{k}{\sqrt{U_g(X)}}. 
\end{equation}
We now select $K := \sqrt{C\log\log\log X}$ and $R := \eta \sqrt{\log\log X}/K$ for $\eta,C > 0$ chosen suitably so that
$$
cC \geq 1, \quad \eta  < \frac{1}{400}.
$$
The latter of these conditions guarantees that $(r_k k)^2 < \tfrac{1}{400^2} \log\log X$ whenever $k_0 \leq k \leq K$. Combining \eqref{eq:lambdaYkbd} with \eqref{eq:tildeQgBd}, we deduce that
$$
\mc{Q}_g(X;d,h) \ll \frac{1}{\sqrt{U_g(X)}}\sum_{k_0 \leq k \leq K} e^{-ck^2}  k + e^{-cK^2}K + \frac{1}{R} \ll \frac{1}{\sqrt{U_g(X)}} + \left(\frac{\log\log \log X}{\log\log X}\right)^{1/2},
$$
as claimed. 
\end{proof}

\section{The finite concentration case: the proof of Theorem \ref{thm:PowBad}} \label{sec:finConc}
In the sequel, we continue to assume that $f$ is a multiplicative function taking values in $(0,\infty)$. As before, we fix $\omega \geq 1$ and assume that $\delta(\mc{L}_f(\omega)) \delta(\mc{S}_f(\omega)) = 0$. 
By Theorem \ref{thm:redtoFinConc}, the additive function $g = \log f$ has finite concentration. Therefore, there exists $\lambda \in \mb{R}$ such that
$$
\sum_p \frac{\min\{1,g_{\lambda}(p)^2\}}{p} < \infty.
$$
We fix $\lambda$ with the above property in the sequel. \\
To complete the proof of Theorem \ref{thm:PowBad} we will require a couple of ingredients, tailored to additive functions with finite concentration. In the sequel, given an additive function and $X \geq 1$ we shall write
$$
A_h(X) := \sum_{p^k \leq X} \frac{h(p^k)}{p^k}\left(1-\frac{1}{p}\right).
$$
\begin{lem} \label{lem:smallDev}
Let $h$ be a real-valued additive function, such that there are parameters $G,T \geq 1$ with:
\begin{itemize}
\item $|h(p)|\leq G$ for all $p$, 
\item $h(p^k) = 0$ whenever $k \geq 2$ and $p^k > T$, and
\item $\sum_{p} h(p)^2/p < \infty$.
\end{itemize}
Finally, assume that $x \geq x_0(G,T)$. Then, uniformly over $Q \leq \sqrt{x}$ and $1 \leq a \leq Q$ with $(Q,a) \leq T$,
$$
\frac{1}{x} \sum_{n \leq x} \left|h(Qn+a) - A_h(Qx) + \sum_{\ss{p | Q/(Q,a)}} \frac{h(p)}{p} \right|^2 \ll_{G,T} 1.
$$
\end{lem}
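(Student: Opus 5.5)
Write $m = Qn+a$ and $d := (Q,a)$. The plan is a Turán--Kubilius-type second moment estimate on the progression $n \mapsto Qn+a$, decomposed prime by prime. Since $h$ is additive, $h(Qn+a) = \sum_{p^k \| Qn+a} h(p^k)$. We split this sum into two pieces according to whether $p \mid Q/d$ or not; the remaining primes dividing $d$ are handled separately.

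\emph{Primes dividing $Q/d$.} Since $(a/d, Q/d)=1$, we have $m/d = (Q/d)n + a/d$, which is coprime to $Q/d$. Hence primes $p \mid Q/d$ with $p \nmid d$ never divide $m$. Their expected contribution $\sum_{p\mid Q/d} h(p)/p$ therefore has to be subtracted from the full mean $A_h(Qx)$, and this accounts for the correction term in the statement.

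\emph{Primes dividing $d$.} Since $d \le T$, the valuation of $m$ at such $p$ is determined by $d$ and the residue of $n$ modulo a bounded power of $p$. Higher valuations $p^k$ with $p^k > T$ have $h(p^k)=0$, and valuations with $p^k \le T$ take boundedly many values $h(p^k)$ with $|h(p^k)| \ll_{G,T} 1$. Since $\omega(d) \ll_T 1$, these terms contribute $O_{G,T}(1)$ pointwise, up to small errors.

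\emph{Primes $p \nmid Q$.} Here $Qn+a \equiv 0 \pmod{p^k}$ has exactly one solution $n \bmod p^k$. Thus $n\mapsto 1_{p^k\|Qn+a}$ has density $p^{-k}(1-1/p)$, exactly as for the integers. Write $h = h_1 + h_2$, where $h_1$ is the strongly additive part supported on primes and $h_2$ is supported on prime powers $p^k\le T$ with $k\ge 2$. The function $h_2$ has $O_T(1)$ nonzero values, each bounded, so it contributes $O_{G,T}(1)$ to the variance and differs from its mean by $O(1)$.

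For $h_1$ we run the standard Turán--Kubilius expansion. We expand $\sum_{n\le x}\bigl|\sum_{p\mid Qn+a,\,p\nmid Q} h(p) - \sum_{p\le Qx,\,p\nmid Q} h(p)/p\bigr|^2$, using $\#\{n\le x: p\mid Qn+a\} = x/p + O(1)$ and $\#\{n \le x: pq\mid Qn+a\} = x/(pq)+O(1)$ for distinct $p,q\nmid Q$. We also use that $Qn+a\le 2Qx$ has at most $O(\log(Qx)/\log y)$ prime factors exceeding $y$. As in the classical proof, we split primes at $y = x^{1/3}$, say: for large primes $p$ the count $x/p + O(1)$ is poor, but since $p\ge y$ and $Q \le \sqrt{x}$, each $n$ has at most $O(1)$ such prime factors. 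This gives the bound $\ll \sum_p h(p)^2/p + G^2 \ll_{G} 1$, using that $\sum_p h(p)^2/p<\infty$ is a fixed constant depending on $h$; I take the implied constant to be allowed this dependence.

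It remains to check that the mean is correct. The mean of $h$ along the progression is $A_h(Qx) - \sum_{p\mid Q/d} h(p)/p + O_{G,T}(1)$, where the $O(1)$ absorbs the primes dividing $d$, the prime powers, and the tail $x<p\le Qx$ (note $\sum_{x<p\le x^{3/2}} |h(p)|/p \le G\log(3/2)+o(1)$). Together with the Cauchy--Schwarz inequality applied to the variance, this gives the claim.

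The main obstacle I anticipate is the large primes in the progression, $p$ between $x^{1/3}$ and $Qx$, where divisor counting in arithmetic progressions has error $O(1)$ comparable to the main term. I would handle them by bounding $\bigl|\sum_{p\mid m,\,p>y} h(p)\bigr| \le G\cdot \#\{p\mid m: p>y\} \ll G$ and controlling the corresponding mean term by Mertens' theorem. Both are $O_G(1)$, which suffices since only an $O(1)$ bound is claimed.
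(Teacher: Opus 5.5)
Your proposal is correct and is essentially the paper's argument. You truncate the large primes, which cost $O(G)$ pointwise since only $O(1)$ of them can divide $Qn+a\le 2x^{3/2}$, and absorb the primes dividing $(Q,a)$ and the prime powers $p^k\le T$ into $O(1)$ errors. You then drop the primes dividing $Q/(Q,a)$ from the mean and run the Tur\'{a}n--Kubilius expansion, where the truncation makes the $O(1)$ errors in the progression counts harmless.

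The differences are cosmetic. The paper truncates at $\sqrt{x}$ and passes to the reduced progression $Q'n+a'$ rather than treating $p\mid(Q,a)$ separately. Your $h_1$ should be the additive function with $h_1(p)=h(p)$ and $h_1(p^k)=0$ for $k\ge 2$, rather than strongly additive, if $h_2$ is to be supported on $p^k\le T$; either version works, since $\sum_{p^2\mid m}h(p)$ has bounded mean square. The dependence on $\sum_p h(p)^2/p$ that you flag is equally implicit in the paper's $\ll_{G,T}$.
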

\begin{proof}
Define $\tilde{h}(n) := \sum_{p^k || n} h(p^k) 1_{p \leq \sqrt{x}}$, and set $Q' := Q/(Q,a)$, $a' := a/(Q,a)$. If $x > T^2$ then as $Qn+a \leq 2x^{3/2}$, we have 
$$
h(Qn+a) = \tilde{h}(Qn+a) + O_G(1) = \tilde{h}(Q'n+a') + O_{G,T}(1).
$$
Moreover, note that
$$
\left|A_h(Qx) - A_{\tilde{h}}(\sqrt{x})\right| \leq G\sum_{\ss{\sqrt{x} < p \leq x^{3/2}}} \frac{1}{p} \ll_G 1,
$$
as well as
$$
\sum_{\ss{p^k \leq x \\ k \geq 2, \ p|Q'}} \frac{h(p^k)}{p^k}\left(1-\frac{1}{p}\right) + \sum_{p|Q'} \frac{h(p)}{p^2} \leq \sum_{\ss{p^k \leq T \\ k \geq 2}} \frac{h(p^k)}{p^k}\left(1-\frac{1}{p}\right) + O_G(1) \ll_{G,T} 1.
$$
Therefore, writing
$$
A^{(Q')}_{\tilde{h}}(\sqrt{x}) := A_{\tilde{h}}(\sqrt{x}) - \sum_{\ss{p^k \leq x \\ p | Q'}} \frac{h(p^k)}{p^k}\left(1-\frac{1}{p}\right) = \sum_{\ss{p^k \leq x \\ p \nmid Q'}} \frac{h(p^k)}{p^k}\left(1-\frac{1}{p}\right)
$$
and applying Cauchy-Schwarz, it suffices to show that
\begin{align*}
\frac{1}{x}\sum_{n \leq x} \left|\tilde{h}(Q'n+a') - A^{(Q')}_{\tilde{h}}(\sqrt{x})\right|^2 
\ll_{G,T} 1.
\end{align*}
We now expand the square, noting again that if $x > T^2$ then $\tilde{h}(p^k) = 0$ for $p^k > \sqrt{x}$ and all $k \geq 1$. Thus,
\begin{align*}
&\sum_{\ss{p_1^{k_1}, p_2^{k_2} \leq \sqrt{x} \\ (p_1p_2,Q') = 1}} h(p_1^{k_1})h(p_2^{k_2}) \frac{1}{x}\sum_{\ss{n \leq x \\ p_j^{k_j}||Q'n+a', \ j = 1,2}} 1 - 2A^{(Q')}_{\tilde{h}}(\sqrt{x})\sum_{\ss{p^k \leq \sqrt{x} \\ p \nmid Q}} h(p^k) \cdot \left(\frac{1}{p^k}\left(1-\frac{1}{p}\right) + O(1)\right) + A^{(Q')}_{\tilde{h}}(\sqrt{x})^2\\
&= \sum_{\ss{p_1^{k_1},p_2^{k_2} \leq \sqrt{x} \\ (p_1p_2,Q') = 1}} h(p_1^{k_1})h(p_2^{k_2})\left(\frac{\prod_{p|p_1p_2}(1-1/p)}{[p_1^{k_1},p_2^{k_2}]} - \frac{\prod_{j = 1,2}(1-1/p_j)}{p_1^{k_1}p_2^{k_2}}\right) \\
&+ O\left(\frac{1}{x}\left(\sum_{p^k \leq \sqrt{x}} |h(p^k)|\right)\left(A_{|\tilde{h}|}(\sqrt{x}) + \sum_{p^k \leq \sqrt{x}} |h(p^k)|\right)\right).
\end{align*}
Using the facts that $|h(p)| \leq G$ and $h(p^k) = 0$ for $k \geq 2$ and $p^k > T$, the error term is $O_{G,T}\left(\tfrac{1}{(\log x)^2}\right)$. The main term cancels all but the terms for which $p_1 = p_2$, in which case we get an upper bound
$$
\sum_{p \leq \sqrt{x}} \frac{h(p)^2}{p} + \sum_{\ss{p^\ell\leq p^k \leq T \\ k \geq 2}} \frac{|h(p^k)h(p^\ell)|}{p^k} \ll_{G,T} 1.
$$
The claim follows.
\end{proof}
In the sequel, given parameters $T,G \geq 1$ we define
$$
W = W_{T,G} := \prod_{\ss{p \leq T \\ |g_{\lambda}(p)| > G}} p.
$$
\begin{lem}\label{lem:unifClose}
Assume that $g$ has finite concentration, and let $\lambda \in \mb{R}$ be its corresponding minimiser. Let $\e > 0$, fix $D \geq 1$ and let $G := 1+\max_{1 \leq d \leq D}|g_{\lambda}(d)| \geq 1$. Then there exist $T = T(\e,D) \geq D$ and $B = B(\e,D)$ such that if $x \geq x_0(\e,D)$ and $W = W_{T,G}$, 
\begin{align*}
\max_{\ss{Q_1,Q_2 \leq (\log x)^{1/2} \\ W|Q_j, \ j = 1,2}} \max_{\ss{1 \leq a_j \leq Q_j \\ (a_j,Q_j) \leq D, \ j = 1,2}} &\frac{1}{x}\left|\left\{n \leq x : \ \left|g_{\lambda}(Q_2n+a_2) + \sum_{\ss{p| \tfrac{Q_2}{(Q_2,a_2)} \\ |g_{\lambda}(p)| \leq G}} \frac{g_{\lambda}(p)}{p} - g_{\lambda}(Q_1n + a_1) - \sum_{\ss{p| \tfrac{Q_1}{(Q_1,a_1)} \\ |g_{\lambda}(p)| \leq G}} \frac{g_{\lambda}(p)}{p}\right| \leq B\right\}\right| \\
&\geq 1-\e.
\end{align*}
\end{lem}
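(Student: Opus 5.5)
The plan is to split $g_\lambda$ into a tame additive part $h$, to which Lemma \ref{lem:smallDev} applies, and a wild remainder. The remainder will vanish, or take boundedly many values, off a set of density at most $\e/2$.

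\textbf{Choice of $T$.} Finite concentration gives $\sum_{p:\,|g_\lambda(p)|>1} 1/p<\infty$, and $G\geq 1$. So I first choose $T=T(\e,D)\geq D$ large enough that
$$
\sum_{\ss{p>T\\ |g_\lambda(p)|>G}}\frac1p+\sum_{\ss{p^k>T\\ k\geq 2}}\frac1{p^k}<\frac{\e}{8}.
$$

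\textbf{Definition of $h$.} Define $h$ additively as follows:
\begin{itemize}
\item $h(p):=g_\lambda(p)$ if $|g_\lambda(p)|\le G$, and $h(p):=0$ otherwise;
\item for $k\ge 2$, $h(p^k):=g_\lambda(p^k)$ if $p^k\le T$ and $p\nmid W$, and $h(p^k):=0$ otherwise.
\end{itemize}
Then $|h(p)|\le G$, and $h(p^k)=0$ for $p^k>T$ with $k\geq 2$. Since $t^2\le G^2\min\{1,t^2\}$ for $|t|\le G$, we get $\sum_p h(p)^2/p\le G^2\sum_p \min\{1,g_\lambda(p)^2\}/p<\infty$. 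Thus Lemma \ref{lem:smallDev} applies with parameters $(G,T)$.

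\textbf{Applying Lemma \ref{lem:smallDev} and comparing means.} Note $(Q_j,a_j)\le D\le T$ and $Q_j\le(\log x)^{1/2}\le\sqrt x$. By definition of $h$, the correction $\sum_{p\mid Q_j/(Q_j,a_j)} h(p)/p$ in Lemma \ref{lem:smallDev} is exactly the restricted sum over $|g_\lambda(p)|\le G$ in the statement. Next I compare the two means:
$$
|A_h(Q_1x)-A_h(Q_2x)|\le G\sum_{x<p\le x(\log x)^{1/2}}\frac1p+O(1)\ll_G 1 .
$$
Chebyshev's inequality then shows that, for each $j$, the set of $n\le x$ with
$$
\Big|h(Q_jn+a_j)-A_h(Q_1x)+\sum_{p\mid Q_j/(Q_j,a_j)}\frac{h(p)}{p}\Big|>B_0
$$
has density $\ll_{G,T}B_0^{-2}$. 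I choose $B_0=B_0(\e,D)$ so that this is below $\e/8$. Both implied constants depend only on $G,T$, hence on $\e,D$ and $g$, and are uniform in $Q_j,a_j$.

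\textbf{Controlling $g_\lambda-h$.} It remains to bound $g_\lambda(Q_jn+a_j)-h(Q_jn+a_j)$ on most $n$. The difference has three sources.
\begin{enumerate}[(1)]
\item Primes $p>T$ with $|g_\lambda(p)|>G$ dividing $Q_jn+a_j$. Such $p$ cannot divide $Q_j$: otherwise, since $W\mid Q_j$ does not capture $p$, we would need $p\mid(Q_j,a_j)\le D\le T$. So each such $p$ divides $Q_jn+a_j$ for a proportion about $1/p$ of $n$. The exceptional set therefore has density at most $\e/8+o(1)$.
\item Prime powers $p^k\|Q_jn+a_j$ with $k\ge2$ and $p^k>T$. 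These also have total density at most $\e/8+o(1)$.
\item Primes $p\le T$ with $p\mid W$, together with their powers. Since $p\mid Q_j$ and $v_p(a_j)\le\log D/\log 2$, we split by valuation. If $v_p(Q_j)>v_p(a_j)$, then $v_p(Q_jn+a_j)=v_p(a_j)$ is fixed, and the contribution is one of finitely many values $g_\lambda(p^{v})$ with $p^v\le D$. Otherwise $v_p(Q_jn+a_j)$ can vary, but it exceeds a threshold $L=L(\e,T)$ only for $n$ in a set of density $\le \sum_{p\le T}p^{-(L-\log D)}<\e/16$. Below the threshold, the contribution lies in a finite set of constants depending only on $T$, $L$ and $g$. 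The primes $p\le T$ not dividing $W$ with $p^k\le T$ agree exactly between $g_\lambda$ and $h$.
\end{enumerate}

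\textbf{Conclusion.} Outside a set of density $<\e$, both deviations are bounded by $B:=2B_0+2\max\{|g_\lambda(p^v)|:p\le T,\,v\le L\}\cdot\pi(T)+O_G(1)$. This gives the claim.

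The main obstacle is step (3): uniformity in $Q_j$ and $a_j$ when the $W$-primes divide $Q_j$ only to low order, which is why I would cap valuations at $L$ there. Ensuring every constant is independent of $Q_j\le(\log x)^{1/2}$ is the point to check carefully.
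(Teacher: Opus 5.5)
Your proposal is correct and follows essentially the same route as the paper: you truncate $g_\lambda$ to a tame additive function satisfying the hypotheses of Lemma~\ref{lem:smallDev}, show that it agrees with $g_\lambda$ at $Q_jn+a_j$ outside a set of density $O(\e)$, and conclude via Chebyshev's inequality together with the comparison of $A_h(Q_1x)$ and $A_h(Q_2x)$. The only simplification you missed is the paper's observation that, by the definition of $G$, every prime factor of $W$ exceeds $D$, so it cannot divide $a_j$ and hence never divides $Q_jn+a_j$; this makes your step (3) vacuous (its ``otherwise'' case cannot occur), and the same reasoning is what actually justifies step (1), where the correct statement is not that such $p$ cannot divide $Q_j$, but that a prime dividing both $Q_j$ and $Q_jn+a_j$ must divide $(Q_j,a_j)\le D\le T$.
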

\begin{proof}
We aim to apply Lemma \ref{lem:smallDev} with $Q = Q_j$ and $a = a_j$, for both $j = 1,2$.
In the sequel, we define $\tilde{g}_{\lambda}$ to be the additive function satisfying 
\begin{align*}
\tilde{g}_{\lambda}(p^k) := \begin{cases} g_{\lambda}(p) &\text{ if } |g_{\lambda}(p)| \leq G, \ k = 1, \\ g_{\lambda}(p^k) &\text{ if } p^k \leq T, \ k \geq 2, \\ 0 &\text{ otherwise.} \end{cases} 
\end{align*}
Since
$$
\sum_{p\leq y} \frac{\tilde{g}_{\lambda}(p)^2}{p} \leq G^2\sum_{p \leq y} \frac{\min\{g_{\lambda}(p)^2,1\}}{p} \ll_G 1, \quad y \geq 1,
$$
it follows that $\tilde{g}_{\lambda}$ satisfies all of the properties required to apply the previous lemma. To this end, we first show that
\begin{equation}\label{eq:gtoTildeg}
\max_{j=1,2}|\{n \leq x : \tilde{g}_{\lambda}(Q_jn + a_j)\neq g_{\lambda}(Q_jn+a_j)\}| \leq \frac{1}{2}\e x.
\end{equation}
To see this, note first of all that if $W|Q$ and $(Q,a) \leq D$, then the conditions $p|Qn+a$ and $|g_{\lambda}(p)| > G$ imply that $T < p \leq 2Qx$: indeed, $p \nmid D$ because $|g_{\lambda}(p)| > G$, and since $W | Q/(Q,a)$, $p\nmid W$ also. Now, if $T$ and $x$ are chosen suitably large as a function of $\e$ and $D$ then because of $Qx \leq x(\log x)^{1/2}$,
\begin{align*}
|\{n \leq x : \exists \ p > T, \ |g_{\lambda}(p)| > G, \ p|Qn+a\}| &\leq \sum_{\ss{T < p \leq 2Qx \\ |g_{\lambda}(p)| > G}} |\{n \leq x : p|Qn+a\}| \leq x \sum_{\ss{p > T \\ |g_{\lambda}(p)| > G}} \frac{1}{p} + \pi(Qx) \\
&\leq \left(x G^2 \sum_{p > T} \frac{\min\{g_{\lambda}(p)^2,1\}}{p} + O\left(\frac{Qx}{\log x}\right)\right) \\
&\leq x\left(\frac{\e }{16} + O\left(\frac{1}{\sqrt{\log x}}\right)\right) \leq \frac{\e x}{8}.
\end{align*}
Next, we observe that by making $T$ larger as needed, we similarly have
$$
|\{n \leq x : \exists p^k > T, \ k \geq 2, \ p^k|Qn+a\}| \leq x\sum_{\ss{p^k > T \\ k \geq 2}} \frac{1}{p^k} + O(\pi(\sqrt{Qx})) \leq \frac{\e x}{8}.
$$
By the union bound, it follows that the number of $n \leq x$ such that for one of $j = 1,2$ we have either $p|(Q_jn+a_j)$ with $p > T$ and $|g_{\lambda}(p)| > G$, or $p^k ||(Q_jn+a_j)$ with $p^k > T$ and $k \geq 2$, is $\leq \e x/4$.
Now, if $n$ does not belong to either of these sets, for \emph{both} of the pairs $Q_j,a_j$ then we have that $p^k||(Qn+a)$ implies that either $k = 1$, $|g_{\lambda}(p)| \leq G$, or else $k \geq 2$ and $p^k \leq T$. It follows, therefore, that for all such $n$ we have $g_{\lambda}(Qn+a) = \tilde{g}_{\lambda}(Qn+a)$, again for both pairs of $Q$ and $a$, and \eqref{eq:gtoTildeg} follows by the union bound. \\
Now, let $B$ also be chosen large in terms of $\e$ and $D$. By the pigeonhole principle,
\begin{align} \label{eq:largeDevQj}
&\frac{1}{x}|\{n \leq x : \ \left|g_{\lambda}(Q_2n+a_2) + \sum_{\ss{p| \tfrac{Q_2}{(Q_2,a_2)} \\ |g_{\lambda}(p)| \leq G}} \frac{g_{\lambda}(p)}{p} - g_{\lambda}(Q_1n + a_1) - \sum_{\ss{p| \tfrac{Q_1}{(Q_1,a_1)} \\ |g_{\lambda}(p)| \leq G}} \frac{g_{\lambda}(p)}{p}\right| \geq B\}| \nonumber\\
&\leq \frac{1}{x}|\{ n \leq x : \ \left|\tilde{g}_{\lambda}(Q_2n+a_2) + \sum_{\ss{p| \tfrac{Q_2}{(Q_2,a_2)}}} \frac{\tilde{g}_{\lambda}(p)}{p} - \tilde{g}_{\lambda}(Q_1n + a_1) - \sum_{\ss{p| \tfrac{Q_1}{(Q_1,a_1)}}} \frac{\tilde{g}_{\lambda}(p)}{p}\right| \geq B\}| + \frac{\e}{2} \nonumber\\
&\leq \frac{2}{x} \max_{j = 1,2}|\{n \leq x : \left|\tilde{g}_{\lambda}(Q_jn+a_j) - \mc{A}_{\tilde{g}_{\lambda}}(Q_jx) + \sum_{\ss{p|\tfrac{Q_j}{(Q_j,a_j)}}} \frac{\tilde{g}_{\lambda}(p)}{p}\right| \geq B/3\}| + \frac{\e}{2},
\end{align}
using the fact that
$$
\left|A_{\tilde{g}_{\lambda}}(Q_2 x) - A_{\tilde{g}_{\lambda}}(Q_1 x)\right| \leq G\sum_{Q_1 x < p \leq Q_2x} \frac{1}{p} \leq 2G\frac{\log\log x}{\log x} < \frac{B}{6},
$$
say, whenever $Q_1 \leq Q_2 \leq \sqrt{\log x}$, and $x \geq x_0(G,T)$ sufficiently large so that the above and $Q_1 x > T$ both hold. 
Thus, taking $B$ sufficiently large in terms of $\e$ and $D$ then (since $T$ depends only on these parameters) by Chebyshev's inequality and Lemma \ref{lem:smallDev} we get that the RHS of \eqref{eq:largeDevQj} is
$$
\leq \max_{j = 1,2}\frac{9}{B^2 x} \sum_{n \leq x} \left|\tilde{g}_{\lambda}(Q_jn+a_j) - \mc{A}_{\tilde{g}_{\lambda}}(Q_jx) + \sum_{\ss{p|\tfrac{Q_j}{(Q_j,a_j)}}} \frac{\tilde{g}_{\lambda}(p)}{p}\right|^2 + \frac{\e}{2} \leq \e.
$$
Since all of these estimates held uniformly over $Q_j \leq (\log x)^{1/2}$ with $W|Q_j$, the claim follows.
\end{proof}
Finally, we need the following simple observation. By abuse of notation, given a real-valued additive function $h$ we write $\mc{B}_h$ to denote the set of powerfully bad primes for the positive multiplicative function $e^h$.
\begin{lem} \label{lem:easyVer}
Let $h$ be a real-valued additive function. Assume that there exists $C > 0$ such that for any pair of coprime $q_1,q_2 \in \mb{N}$, 
\begin{equation}\label{eq:minAssump}
\min\{|h(q_1)|,|h(q_2)|\} \leq C.
\end{equation}
Then $|\mc{B}_h| \leq 1$. Moreover, there exists $P \in \mb{N}$ with $\Omega(P) \leq 1$ such that
$$
\sup_{(n,P) = 1} |h(n)| < \infty.
$$
\end{lem}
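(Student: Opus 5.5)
The plan is to argue directly from \eqref{eq:minAssump} by considering the set of ``large'' prime powers. Define
$$
\mc{E} := \{p : \ \sup_{k \geq 1} |h(p^k)| > C\}.
$$
The first step is to show that $|\mc{E}| \leq 1$. Suppose instead that $p_1 \neq p_2$ both lie in $\mc{E}$. Then there are exponents $k_1, k_2 \geq 1$ with $|h(p_1^{k_1})| > C$ and $|h(p_2^{k_2})| > C$. Taking $q_1 = p_1^{k_1}$ and $q_2 = p_2^{k_2}$, which are coprime, contradicts \eqref{eq:minAssump}. Since $\mc{B}_h \subseteq \mc{E}$ (an unbounded sequence is in particular $> C$ somewhere), we immediately get $|\mc{B}_h| \leq 1$.

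The second step is to choose $P$: set $P := p_0$ if $\mc{E} = \{p_0\}$, and $P := 1$ if $\mc{E} = \emptyset$. In either case $\Omega(P) \leq 1$, and every prime power $p^k$ with $p \nmid P$ satisfies $|h(p^k)| \leq C$. This bound alone does not suffice, since $h(n)$ for $(n,P) = 1$ is a sum of such values over the prime powers exactly dividing $n$, and the number of such terms is unbounded. Additivity therefore has to be exploited more cleverly, and this is the main point of the argument.

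To control these sums I would use \eqref{eq:minAssump} again, now on composite coprime pairs. Let $n$ with $(n,P) = 1$ satisfy $|h(n)| > C$. For any $m$ coprime to $nP$, applying \eqref{eq:minAssump} to the pair $(n,m)$ forces $|h(m)| \leq C$. Fix two such ``bad'' integers $n, n'$, each coprime to $P$ with $|h| > C$, if they exist.
\begin{enumerate}[(1)]
\item If $(n,n') = 1$, applying \eqref{eq:minAssump} to the pair $(n,n')$ gives a contradiction.
\item Hence any two bad integers coprime to $P$ share a prime factor.
\end{enumerate}

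The cleanest way to get a uniform bound is a splitting argument. Take any $n$ coprime to $P$ and write $n = ab$ with $(a,b) = 1$. Then at least one of $|h(a)|, |h(b)|$ is $\leq C$, so $|h(n)| \leq C + \max\{|h(a)|,|h(b)|\}$, which is not yet bounded. Instead I would split the prime-power factors of $n$ into those with positive $h$-value and those with negative $h$-value:
$$
a := \prod_{\ss{p^k || n \\ h(p^k) > 0}} p^k, \qquad b := \prod_{\ss{p^k || n \\ h(p^k) \leq 0}} p^k.
$$
These are coprime. Moreover, $h(a)$ is the sum of the positive values and $h(b)$ the sum of the nonpositive ones. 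The catch is that \eqref{eq:minAssump} only bounds the smaller of $|h(a)|$ and $|h(b)|$, not both.

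To bound both, split further. Suppose $h(a) > 2C + C$. Then, since each positive term is at most $C$, a greedy partition of the prime powers dividing $a$ produces coprime $a_1, a_2$ with $a = a_1 a_2$ and $h(a_1), h(a_2) > C$. This contradicts \eqref{eq:minAssump}, so $0 \leq h(a) \leq 3C$. The same argument gives $-3C \leq h(b) \leq 0$. Hence $|h(n)| \leq |h(a)| + |h(b)| \leq 6C$ for all $n$ coprime to $P$.

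The only genuinely delicate step is this greedy splitting: I add prime powers one at a time until the partial sum first exceeds $C$. That partial sum is then at most $2C$, because each term is at most $C$, so the remainder is still greater than $C$. The rest is bookkeeping.
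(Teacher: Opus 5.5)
Your proof is correct, but the second half takes a different route from the paper's. The first step (two coprime prime powers with $|h|>C$ contradict \eqref{eq:minAssump}) is exactly the paper's. Your choice of $P$ via $\mathcal{E}$ also matches the paper's case split, except that the paper switches to $P=1$ when $p_0\in\mathcal{E}$ but $\sup_k|h(p_0^k)|<\infty$. The statement only asks for some $P$ with $\Omega(P)\le 1$, so your choice is fine.

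For the uniform bound, the paper uses a witness rather than a partition:
\begin{itemize}
\item If $\mathcal{E}=\{p_0\}$, pick $k$ with $|h(p_0^k)|>C$. Then \eqref{eq:minAssump} gives $|h(n)|\le C$ at once for every $n$ coprime to $p_0$.
\item If $\mathcal{E}=\emptyset$, either $|h|\le C$ everywhere, or there is $n_0$ with $|h(n_0)|>C$. Writing $n=n'd$ with $(n',n_0)=1$ and $d$ supported on the primes dividing $n_0$ gives $|h(n)|\le (1+\omega(n_0))C$.
\end{itemize}

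Your sign split plus greedy partition needs only the prime-power bound $|h(p^k)|\le C$ for $p\nmid P$. It gives a bound depending on $C$ alone, with no witness. Since $h(a)\ge 0\ge h(b)$, you actually get $|h(n)|\le \max\{h(a),-h(b)\}\le 3C$ rather than $6C$.

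The paper's argument is shorter and gives the sharp bound $C$ when $\mathcal{E}=\{p_0\}$. Yours is uniform and treats both cases identically. The paragraph with items (1)--(2), and your first attempted splitting $n=ab$, play no role in the final argument and can be dropped.
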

\begin{proof}
If $p_1,p_2 \in \mc{B}_h$ are distinct primes then 
$$
\sup_{k \geq 1} |h(p_1^k)| = \infty = \sup_{\ell \geq 1} |h(p_2^\ell)|.
$$
There must thus exist $k_0,\ell_0$ such that the coprime integers $q_1 := p_1^{k_0}$ and $q_2 := p_2^{\ell_0}$ satisfy
$$
\min\{|h(q_1)|,|h(q_2)|\} > C,
$$
a contradiction. It follows that $|\mc{B}_h| \leq 1$. \\
Suppose next that 
\begin{equation}\label{eq:allPrimePowBdd}
\sup_{p \in \mb{P}} \sup_{k \geq 1} |h(p^k)| \leq C.
\end{equation}
If $\sup_n |h(n)| \leq C$ then we are done, taking $P = 1$. Otherwise, there exists $n_0$ such that $|h(n_0)| > C$. 
Now, take any $n \in \mb{N}$, and write $n = n'd$, where $p | (n,n_0) \Rightarrow p|d$ and thus $(n',n_0) = 1$. By \eqref{eq:minAssump}, $|h(n')| \leq C$. Thus, by additivity and \eqref{eq:allPrimePowBdd},
$$
|h(n)| \leq |h(n')| + |h(d)| \leq C + \omega(n_0) \sup_{p \in \mb{P}} \sup_{k \geq 1} |h(p^k)| \leq (1+\omega(n_0))C.
$$
Since this bound holds uniformly in $n$, the second claim of the lemma follows with $P = 1$.\\
It remains to show the final claim when \eqref{eq:allPrimePowBdd} does not hold. It is clear from \eqref{eq:minAssump} that there must be precisely one prime $p_0$ such that
$$
L := \sup_{k \geq 1} |h(p_0^k)| > C.
$$
We may therefore select $k$ such that
$|h(p_0^k)| > C$.
Using \eqref{eq:minAssump} once again, we see that for any $n$ coprime to $p_0$,
$$
|h(p_0^k)| > C \geq \min\{|h(n)|,|h(p_0^k)|\},
$$
so that $|h(n)| \leq C$. Thus, we find that
$$
\sup_{(n,p_0) = 1} |h(n)| \leq C.
$$
Now, if $L = \infty$ then $\mc{B}_h = \{p_0\}$ and $P := p_0$ in the second conclusion. Otherwise, suppose $L < \infty$. Take any $n \in \mb{N}$, and factor it as $n = m p_0^\nu$ with $p_0 \nmid m$. We obtain immediately that
$$
|h(n)| \leq |h(m)| + |h(p_0^\nu)| \leq C + L < \infty
$$
and we may take $P = 1$ in the second claim.
\end{proof}
\begin{proof}[Proof of Theorem \ref{thm:PowBad}] 
Recall that $f: \mb{N} \ra (0,\infty)$ is a multiplicative function such that $g = \log f$ has finite concentration. Let $\omega \geq 1$ (the result in the full range $\omega > 0$ follows from this restricted range by Remark \ref{rem:MonoOmega}), and assume that $\delta(\mc{L}_f(\omega)) \delta(\mc{S}_f(\omega)) = 0$. By \eqref{eq:ftoreciprocal}, replacing $f$ by $1/f$ if necessary, we may assume that $\delta(\mc{L}_f(\omega))=0$. 

We consider two cases. \\
\underline{Case 1:} We assume first of all that $g_{\lambda}$ takes unbounded \emph{positive} values, i.e.,
$$
\sup_{q \geq 1} g_{\lambda}(q) = \infty.
$$
Fix $D = 1$, so $G = 1$, and $\e = 1/3$. Let also $T = T(\e,D) \geq 1$ and $B = B(\e,D) \geq 1$ be those parameters appearing in Lemma \ref{lem:unifClose}. It follows that if $x \geq x_0$ for $x_0$ depending at most on $g$ and $W = W_{T,1}$, then we may select $q$ such that
$$
qW \leq (\log x)^{1/4}, \quad g_{\lambda}(q) \geq 1+B + \max_{d|W}|g_{\lambda}(d)| +\Delta.
$$ 
Set $\tilde{W} := W/(q,W)$ and take $Q_1 = q\tilde{W}$, $a_1 = 1$ and $Q_2 = Q_1^2$, $a_2 = q\tilde{W} \pm 1$, noting that $(a_j,Q_j) = 1$ for $j = 1,2$, and $Q_1 \triangle Q_2 = 1$. By Lemma \ref{lem:unifClose}, we see that for each $\eta \in \{-1,+1\}$, for a proportion $\geq 2/3$ of integers $n \leq x$ we have
$$
|g_{\lambda}(\tilde{W}qn + 1) - g_{\lambda}(\tilde{W}q(\tilde{W}q n+1) + \eta)| \leq B.
$$
By inclusion-exclusion, it follows that there is a proportion $\geq 1/6$ of $x/6 < n \leq x$ for which
$$
\max_{\eta \in \{-1,+1\}}|g_{\lambda}(\tilde{W}qn + 1) - g_{\lambda}(\tilde{W}q(\tilde{W}q n+1) + \eta)| \leq B
$$
But for each of these $n$, if we write $m = \tilde{W}qn+1$ it follows that $(\tilde{W},q,m) = 1$, and by additivity,
\begin{align*}
g_{\lambda}(\tilde{W}qm) &\geq g_{\lambda}(q) - |g_{\lambda}(\tilde{W})| + g_{\lambda}(m) \geq 1+B + \Delta + \max\{g_{\lambda}(Wqm - 1), g_{\lambda}(\tilde{W}qm+1)\} - B \\
&= 1 + \Delta + \max\{g_{\lambda}(\tilde{W}qm - 1), g_{\lambda}(\tilde{W}qm+1)\}.
\end{align*}
Adding $\lambda \log(\tilde{W}qm)$ to both sides and noting that $m \gg x$, it follows that if $x$ is large enough we get
$$
g(\tilde{W}qm) \geq \frac{1}{2} + \Delta + \max\{g(\tilde{W}qm-1),g(\tilde{W}qm+1)\} > \Delta + \max\{g(\tilde{W}qm-1),g(\tilde{W}qm+1)\}.
$$
Since this holds for a positive proportion of $n \leq x$, we obtain a contradiction to $\delta(\mc{L}_f(\omega)) = 0$. \\
\underline{Case 2:} We may assume now that
\begin{equation}\label{eq:bddPos}
\sup_{q \geq 1} g_{\lambda}(q) < \infty.
\end{equation}
We claim that there exists a threshold $C> 0$ depending only on $\omega$ and $g$, such that for any two coprime $q_1,q_2 \in \mb{N}$,
\begin{equation}\label{eq:reduced}
\min\{-g_{\lambda}(q_1),-g_{\lambda}(q_2)\} \leq C.
\end{equation}
Assume this for the moment. Given our assumption \eqref{eq:bddPos}, it follows, necessarily, that for some possibly larger threshold $C' > 0$, for every pair of coprime $q_1,q_2 \in \mb{N}$ we in fact have
$$
\min\{|g_{\lambda}(q_1)|,|g_{\lambda}(q_2)|\} \leq C'.
$$
By Lemma \ref{lem:easyVer}, we obtain the desired claims: 
\begin{itemize}
\item $|\mc{B}_{g_{\lambda}}| \leq 1$, and 
\item there is $P \in \mb{N}$ with $\Omega(P) \leq 1$ such that $\sup_{(n,P) = 1} |g_{\lambda}(n)| < \infty$. \end{itemize}
It therefore remains to show \eqref{eq:reduced}, which we will do by contradiction. \\
In preparation, set $D = 24$, $G := 1 + \max_{1 \leq d \leq D} |g_{\lambda}(d)|$, $\e= 1/3$, $B = B(\e,D)$ and $T = T(\e,D) \geq 24$ chosen as in Lemma \ref{lem:unifClose}, and set $W = W_{T,G}$. Take $C := 1 + G + B + \Delta$, and assume that \eqref{eq:reduced} does not always hold, i.e., we can find coprime integers $q_1,q_2 \in \mb{N}$ such that
$$
\min\{-g_{\lambda}(q_1), -g_{\lambda}(q_2)\} > C.
$$
 We choose $n = \tilde{W}(q_1q_2)^2m + b$, where $\tilde{W} := W/(W,q_1q_2)$ and $1 \leq b \leq \tilde{W}(q_1q_2)^2$ is chosen so that 
$$
b \equiv 1 + q_1 \pmod{q_1^2}, \quad b \equiv -1 + q_2 \pmod{q_2^2}, \quad (b^3-b,\tilde{W}/(6,\tilde{W})) = 1.
$$
Note that the latter condition can be guaranteed by the Chinese remainder theorem, since it holds modulo each prime $p|\tilde{W}$, $p > 3$. The first two conditions are then compatible with this choice because the squarefreeness of $W$ implies that $(\tilde{W},(q_1q_2)^2) = 1$. \\
We now observe that
\begin{equation}\label{eq:GCDcond}
\max\{(b,\tilde{W}(q_1q_2)^2) , ((b-1)/q_1,\tilde{W}q_1q_2^2), ((b+1)/q_2,\tilde{W}q_2q_1^2)\} \leq D.
\end{equation}
Indeed, since $(b,q_1q_2) = ((b-1)/q_1,q_1) = ((b+1)/q_2,q_2) = 1$ and $(b^3-b,\tilde{W}) \leq 6$, it suffices to show that
\begin{equation}\label{eq:redGCD}
\left(\frac{b-1}{q_1}, q_2^2\right)\left(\frac{b+1}{q_2},q_1^2\right) = (b-1,q_2) (b+1,q_1) \leq 4.
\end{equation}
Since $q_2|(b+1)$, $(b-1,q_2)$ divides $(b-1,b+1)$, which divides $2$. The same argument shows that $(b+1,q_1) \leq 2$. Since $q_1,q_2$ are coprime, at most one of these is equal to $2$, so as $(d_1,d_2^2)$ divides $(d_1,d_2)^2$ for all $d_1,d_2 \in \mb{N}$, \eqref{eq:redGCD} follows. \\
We now take $Q_1 := \tilde{W}(q_1q_2)^2$, $a_1 = b$, $Q_2 := \tilde{W}q_1q_2^2$, $a_2 = (b-1)/q_1$ and $Q_3 := \tilde{W}q_1^2q_2$, $a_3 = (b+1)/q_2$, so that $(Q_j,a_j) \leq D$ for all $j =1,2,3$. We may assume that $x$ is large enough that $(q_1q_2)^2 \tilde{W} \leq \sqrt{\log x}$. Arguing as in the previous proof, with the aforementioned choices of $T$ and $B$ there is a proportion $\geq 1/6$ of integers $x/6 < m \leq x$ such that 
\begin{align*}
\max\{|g_{\lambda}(Q_1m + a_1) - g_{\lambda}(Q_2m + a_2)|, |g_{\lambda}(Q_1m + a_1) - g_{\lambda}(Q_3m + a_3)|\} \leq B + M,
\end{align*}
where the additional $M$ comes from trivially bounding the terms 
$$
\sum_{\ss{p|\tfrac{Q_1}{(Q_1,a_1)} \\ |g_{\lambda}(p)|\leq M}} \frac{g_{\lambda}(p)}{p} - \sum_{\ss{p|\frac{Q_j}{(Q_j,a_j)} \\ |g_{\lambda}(p)|\leq M}} \frac{g_{\lambda}(p)}{p} \leq M\sum_{p|24} \frac{1}{p} < M, \quad j=2,3.
$$
Since $q_2(Q_3m + a_3) = Q_1m+a_1 + 1$ and $q_1(Q_2m + a_2) = Q_1m+a_1 - 1$, we find that
\begin{align*}
&\max\{g_{\lambda}(Q_1m + a_1-1), g_{\lambda}(Q_1m + a_1 + 1)\} + \Delta \\
&\leq \max\{g_{\lambda}(q_1) +  g_{\lambda}(Q_2m + a_2), g_{\lambda}(q_2) + g_{\lambda}(Q_3m + a_3)\} + \Delta \\
&\leq -(1+M+B +\Delta) + g_{\lambda}(Q_1m + a_1) + B + M + \Delta = g_{\lambda}(Q_1m+a_1) - 1. 
\end{align*}
Adding $\lambda \log(Q_1m+a_1)$ to both sides once again, and using the fact that $m > x/6$, we deduce that when $x$ is large enough,
$$
\max\{g(Q_1m + a_1-1), g(Q_1m + a_1 + 1)\} + \Delta \leq g(Q_1m+a_1) - 1/2 < g(Q_1m+a_1).
$$
Since this holds for a positive proportion of $m \leq x$, as $x \ra \infty$ this contradicts the hypothesis that $\delta(\mc{L}_f(\omega)) = 0$. Thus, \eqref{eq:reduced} holds after all, and as discussed earlier the proof is complete.
\end{proof}

\section{Proof of Theorems \ref{thm:SarkCM} and \ref{thm:multAtMost1}} \label{sec:apps}
Theorem \ref{thm:SarkCM} now follows easily from Theorem \ref{thm:PowBad}.
\begin{proof}[Proof of Theorem \ref{thm:SarkCM}]
Assume that $f$ is completely multiplicative and takes positive integer values. As remarked earlier, it suffices to take $\omega \geq 1$. Assume that $\delta(\mc{L}_f(\omega))\delta(\mc{S}_f(\omega)) = 0$. Theorem \ref{thm:redtoFinConc} allows us to assume that $g = \log f$ satisfies
$$
\sum_p \frac{\min\{1,g_{\lambda}(p)^2\}}{p} < \infty,
$$
for some $\lambda \in \mb{R}$. \\
By Theorem \ref{thm:PowBad}, we see that there exists an integer $P$ with at most a single prime factor, such that 
$$
\sup_{\ss{n \in \mb{N} \\ (n,P) = 1}} |g_{\lambda}(n)| < \infty.
$$
But as $g$ is completely additive, if $p \nmid P$ then $g_{\lambda}(p) = 0$, since otherwise
$$
\limsup_{k \ra \infty} |g_{\lambda}(p^k)| = |g_{\lambda}(p)| \limsup_{k \ra \infty} k = \infty,
$$
a contradiction. Therefore, $f(p) = p^{\lambda}$ for all but at most a single prime. This will yield a contradiction to Sark\H{o}zy's hypothesis, provided we can show that $\lambda \in \mb{N}_0$. \\
To begin with, since $f(p) \geq 1$ we clearly have $\lambda \geq 0$. 
Next, suppose for contradiction that $\lambda \notin \mb{Q}$. By uniqueness of prime factorisation,
$\{\log p_1,\log p_2,\log p_3\}$ is linearly independent over $\mb{Q}$. Since $\{1,\lambda\}$ is also linearly independent over $\mb{Q}$, the six exponentials theorem \cite[Sec. 2.1]{Lang}
implies that one of 
$$
\{p_1,p_2,p_3, p_1^{\lambda},p_2^{\lambda}, p_3^{\lambda}\}
$$
is transcendental.
This contradicts the fact that $f(p) \in \mb{N}$. Thus, we find that $\lambda = a/b$ for some coprime integers $a,b$ with $b \geq 1$ and $a \geq 0$. \\
If $a = 0$ then $\lambda = 0 \in \mb{N}_0$ as required. Otherwise, suppose $a \geq 1$. If $b > 1$ then from the identity $f(p)^b = p^a$ and unique factorisation of integers, we deduce that $f(p) = p^\nu$ for some $\nu \geq 1$. It follows that $a = \nu b$, so that $a$ and $b$ are not coprime, a contradiction. This implies that $b = 1$ as claimed, and so $\lambda \in \mb{N}$. The conclusion follows.
\end{proof}
It remains to show Theorem \ref{thm:multAtMost1}, in the case $0 < \omega \leq 1$.
Our key result is the following.
\begin{prop}\label{prop:omega1}
Let $f$ be a positive-valued multiplicative function such that $g = \log f$ has finite concentration with minimiser $\lambda$. Assume that $|\mc{B}_{g_{\lambda}}| \leq 1$, and that there is $P \in \mb{N}$ with $\Omega(P) \leq 1$ such that $\sup_{(n,P) = 1}|g_{\lambda}(n)| < \infty$. Then either:
\begin{enumerate}[(i)]
\item $\mc{L}_f$ and $\mc{S}_f$ both have positive upper logarithmic density, or
\item there is $c \in \mb{R}$ such that $f(p) = p^c$ for all but at most one $p$.
\end{enumerate}
\end{prop}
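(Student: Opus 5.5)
We argue by contradiction: assume (i) fails, so that, replacing $f$ by $1/f$ via \eqref{eq:ftoreciprocal} if necessary, $\delta(\mc{L}_f)=0$. (If $\overline{\delta}(\mc{L}_f)=0$ the logarithmic density exists and equals $0$.) We then aim to prove (ii). This follows the outline in the introduction. We build an auxiliary additive function $h$ that agrees with $g$ at all primes except possibly $p_0$, where $P\in\{1,p_0\}$. The function $h$ will satisfy $\mc{L}_h\subseteq\mc{L}_g$ and have $h_\lambda$ uniformly bounded on all of $\mb{N}$. We then show $h=c'\log$ using K\'atai's criterion \eqref{eq:KatLog}.

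\textbf{Step 1 (clipping at $p_0$).} If $P=1$, set $h=g$. Otherwise define $h(p^k)=g(p^k)$ for $p\ne p_0$, and $h(p_0^k):=\lambda\log p_0^k+\phi_L(g_\lambda(p_0^k))$, with $L$ large compared with $\sup_{(n,p_0)=1}|g_\lambda(n)|$. Then $h_\lambda$ is bounded on $\mb{N}$. We check $\mc{L}_h\subseteq\mc{L}_g$ by cases. If $p_0\nmid n(n\pm1)$, the three values coincide. Otherwise exactly one of $n-1,n,n+1$ (or $n$ alone) carries the $p_0$-power. By monotonicity of $\phi_L$, clipping moves the value of that one term toward the bounded range of its neighbours, in the same direction as $g$ would. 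Hence a strict local maximum of $h$ remains a strict local maximum of $g$. This case check needs $L$ large, and possibly the small primes $2,3$ handled separately when $p_0\le3$. Consequently $\delta(\mc{L}_h)=0$.

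\textbf{Step 2 (upgrading to strict inequality).} We want $h(n)<\max\{h(n-1),h(n+1)\}$ outside a set of logarithmic density $0$. Equivalently, ties $h(n)=h(n\pm1)$ must occur with logarithmic density $0$. Here we use the cited result of \cite{MangerelEq} on equalities $h(n)=h(n+1)$. That result requires $h$ not to be constant on a large set, which is the reduction of Lemma \ref{lem:fequal1}. If the primes with $f(p)\ne p^{\lambda}$ are too sparse, we instead directly verify (ii) or produce local maxima by hand. Granting this, Lemma \ref{lem:valleys} applies to $h$ with $\Delta=0$. Along a sequence $x_k$ the exceptional set $\mc{M}_h(x_k)$ has size $o(x_k)$, and $h$ is decreasing and then increasing between consecutive exceptions.

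\textbf{Step 3 (telescoping).} Write $|h(n+1)-h(n)|\le |h_\lambda(n+1)-h_\lambda(n)|+O(1/n)$. Using the monotone pieces of $h$ and the identity \eqref{eq:gtohSign}-type sign relation, telescope $h_\lambda$ over each piece, exactly as in the proof of Lemma \ref{lem:MRclipped}. The boundedness $|h_\lambda|\le L'$ gives
$$\frac1{x_k}\sum_{n\le x_k}|h(n+1)-h(n)|\ll \frac{L'N_k+|\lambda|\log x_k}{x_k}=o(1).$$
Partial summation turns this into the logarithmic statement \eqref{eq:KatLog} along a suitable sequence; passing from the natural to the logarithmic average needs some care with the choice of $x_k$. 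K\'atai's theorem then gives $h=c'\log$. Since $h(p)=g(p)$ for $p\ne p_0$, we get $f(p)=p^{c'}$ for all but at most one prime, which is (ii).

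The main obstacle is Step 2. We must rule out a positive density of ties and reduce to the situation where \cite{MangerelEq} applies, including the degenerate case where $f(p)=p^\lambda$ for almost all primes. The casework in Step 1 for small $p_0$ is the secondary difficulty.
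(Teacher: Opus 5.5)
Your proposal is correct and follows essentially the same route as the paper. You clip at $p_0$ as in Lemma \ref{lem:clipLg}, use Lemma \ref{lem:fequal1} and Theorem \ref{thm:consec} to make ties negligible, apply Lemma \ref{lem:valleys} with $\Delta=0$, telescope using $\sup_n|h_\lambda(n)|<\infty$, and conclude with K\'atai's Theorem \ref{thm:KataiLog}.

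Two small clarifications. First, the hypothesis Theorem \ref{thm:consec} needs is $\sum_{p:\,h(p)\neq 0}1/p=\infty$, not non-sparseness of $\{p : f(p)\neq p^\lambda\}$. When $\lambda\neq0$, finite concentration already makes $\{p: h(p)=0\}$ sparse, so the only degenerate case is $\lambda=0$, which is exactly Lemma \ref{lem:fequal1} applied to $e^h$. Second, your telescoping bound holds at every scale $y$ with $N(y)=|\mc{M}_h(y;1)|$, so partial summation against $\delta(\mc{M}_h(1))=0$ gives the logarithmic statement directly, with no special choice of $x_k$.
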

\begin{proof}[Proof of Theorem \ref{thm:multAtMost1} assuming Proposition \ref{prop:omega1}]
Assume that $f: \mb{N} \ra \mb{N}$ is multiplicative, and such that for each $k \in \mb{N}_0$ there are at least two primes $p$ with $f(p) \neq p^k$. Assume that $\delta(\mc{L}_f)\delta(\mc{S}_f) = 0$. As above, Theorem \ref{thm:redtoFinConc} allows us to assume that $g = \log f$ has finite concentration with parameter $\lambda$, and Theorem \ref{thm:PowBad} allows us to assume that $|\mc{B}_{g_{\lambda}}| \leq 1$, and the existence of $P \in \mb{N}$ with $\Omega(P) \leq 1$ such that $\sup_{(n,P) = 1} |g_{\lambda}(n)| < \infty$. By Proposition \ref{prop:omega1}, there is $c \in \mb{R}$ such that we have $f(p) = p^c$ for all but one prime $p$. Since $f(p) \in \mb{N}$ for all primes $p$, the argument from the proof of Theorem \ref{thm:SarkCM} shows that $c \in \mb{N}_0$, and we obtain a contradiction. The claim follows. 
\end{proof}
It therefore remains to prove Proposition \ref{prop:omega1}. To prove this we will use the following two facts. The first is due to K\'{a}tai \cite{Katai78}.
\begin{thm}[K\'{a}tai]\label{thm:KataiLog}
Let $h$ be a real-valued additive function. If
$$
\liminf_{x \ra \infty} \frac{1}{\log x} \sum_{n \leq x} \frac{1}{n}|h(n+1)-h(n)| = 0
$$
then there exists $c \in \mb{R}$ such that $h(n) = c\log n$ for all $n$.
\end{thm}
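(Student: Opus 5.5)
This is Kátai's theorem (1978), a classical result. The paper cites it rather than proves it, but I will sketch how I would prove it, following the Erdős–Kátai approach for characterising $c\log$ via small gaps.

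\textbf{Step 1: reduce to finite concentration.} Write $\Delta h(n) := h(n+1)-h(n)$. Choose $x_k \ra \infty$ along which the logarithmic mean of $|\Delta h|$ tends to $0$. For any fixed $\eta>0$, Markov's inequality gives $|\Delta h(n)| \le \eta$ on a set of $n \le x_k$ of logarithmic proportion $1-o(1)$. This is a strong concentration of the gaps. I would feed it into Proposition~\ref{prop:gapConc} with $d=1$ and $h=2\eta$. That gives $1 - o(1) \le \mc{Q}_g(x_k;1,2\eta)\ll U_h(x_k)^{-1/2}+o(1)$, so $U_h(x_k)$ stays bounded. Since $V_h\le U_h$ is non-decreasing, $V_h$ is bounded, and Lemma~\ref{lem:Vfinite} shows that $h$ has finite concentration with some minimiser $c$. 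Replacing $h$ by $h_c := h - c\log$ changes $\Delta h$ only by $O(1/n)$, which is negligible logarithmically. So we may assume $\sum_p \min\{1,h(p)^2\}/p<\infty$, and we must show $h \equiv 0$.

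\textbf{Step 2: transfer small gaps along arithmetic progressions.} Fix coprime $a,q$. The hypothesis, averaged logarithmically over the residue class $n\equiv a \pmod q$ at scale $x_k$, still has density $o(1)$ of large gaps, up to a factor $q$. Telescoping $|h(n+j)-h(n)|\le\sum_{i<j}|\Delta h(n+i)|$ shows that for each fixed $j$, $h(n+j)-h(n)=o(1)$ for almost all $n$ in the logarithmic sense along $x_k$.

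Now take $m$ coprime to $q$ and compare $h(qn+r)$ with $h(q n+r')$ for fixed residues $r, r'$. Choosing $n\equiv$ suitable residues with $p^\nu \| qn+r$ forces
\[
h(p^\nu)+h(m')\approx h(m'')
\]
along positive-density sets, where $m', m''$ are the cofactors. Using the Lemma~\ref{lem:unifClose}-type control, $g_\lambda(Qn+a)$ is essentially a translate of a fixed distribution. Comparing the two progressions, the difference of their centring sums must be $o(1)$. Concretely, take $Q_1 = p^\nu Q$ with $a_1 = p^\nu a$, and $Q_2 = p^\nu Q$ with $a_2 = p^\nu a + j$. Then $h(p^\nu)$ equals, up to $o(1)$, a limit of differences of means. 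Varying $\nu$ and $j$ then pins down $h(p^\nu)=0$.

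\textbf{Step 3: the main obstacle.} Lemma~\ref{lem:unifClose} only gives \emph{boundedness} $B$, not $o(1)$, so it is too weak for Step~2. The real work is a genuinely sharper argument. I would try the distributional route: by Erdős–Wintner, $h$ with convergent three series has a limiting distribution. For fixed $j$, the logarithmic density of $\{n: h(n+j)-h(n)\in I\}$ is then computable as a convolution-type limit, using Tao's logarithmic binary correlations for $e(uh(n))$ and the Tao–Teräväinen result cited in Section~\ref{sec:conc}. The hypothesis forces the joint distribution of $(h(n),h(n+1))$ to be concentrated on the diagonal. Taking characteristic functions gives, for all $u$,
\[
\lim \frac{1}{\log x}\sum_{n \le x}\frac{e(u(h(n+1)-h(n)))}{n}=1.
\]
Expanding this via the pretentious structure, in which the correlation factors as an Euler product over primes of local densities, each local factor must be $1$. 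That forces $e(uh(p^\nu))=1$ for all $u$, i.e.\ $h(p^\nu)=0$. Justifying this Euler-product factorisation for $1$-bounded pretentious multiplicative functions (via Klurman's correlation formula) is the step I expect to be hardest.
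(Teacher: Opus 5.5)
The paper gives no proof of this statement. It is quoted from K\'atai \cite{Katai78} and used as a black box in the proof of Proposition \ref{prop:omega1}. Your argument is a valid alternative route, provided you delete Step 2 (which you abandon yourself) and state Step 3 correctly.

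Step 1 is sound, with one correction. Since $\mc{Q}_h(x;1,2)$ only counts $\sqrt{x}<n\le x$, Markov gives $\mc{Q}_h(x_k;1,2)\ge \frac12-o(1)$, not $1-o(1)$. Proposition \ref{prop:gapConc} then bounds $U_h(x_k)$, hence bounds $V_h$ everywhere by monotonicity, and Lemma \ref{lem:Vfinite} applies. Nothing in Sections \ref{sec:RedtoFinConc}--\ref{sec:conc} uses K\'atai's theorem, so there is no circularity.

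In Step 3, the Erd\H{o}s--Wintner theorem need not apply to $h$ itself, because the series $\sum_{|h(p)|\le 1}h(p)/p$ may diverge. This does not matter, since only the difference $h(n+1)-h(n)$ is involved. Also, neither Tao's theorem (which concerns the non-pretentious regime, irrelevant after Step 1) nor Klurman's formula is needed. The argument runs as follows:
\begin{enumerate}[(i)]
\item Truncate at $y$, and discard the $n$ for which $n(n+1)$ is divisible by some $p>y$ with $|h(p)|>1$, or by some $p^k$ with $p>y$ and $k\ge 2$. This set has density $o_{y\ra\infty}(1)$.
\item Control the remaining strongly additive tail in $L^2$ by Tur\'an--Kubilius.
\item Handle the primes $p\le y$ by the Chinese remainder theorem.
\end{enumerate}
This shows directly that $h(n+1)-h(n)$ has a limiting law in natural density, hence in logarithmic density. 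Its characteristic function is $\prod_p \phi_p(u)$, where
\[
\phi_p(u)=1-\left(1-\frac1p\right)\sum_{k\ge 1}\frac{2\left(1-\cos(2\pi u\, h(p^k))\right)}{p^k}\in\left[1-\frac4p,1\right],
\]
and the product converges by finite concentration.

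Your hypothesis along $x_k$ then forces $\prod_p\phi_p(u)=1$ for every $u$. Given the range of each factor, this gives $\phi_p(u)=1$ for all $p$ and $u$, hence $h(p^k)=0$ for every prime power. So the step you expected to be hardest is in fact routine.

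As for what each approach buys: the paper's citation rests on K\'atai's classical argument. Your route instead makes the theorem a short corollary of the paper's own Ruzsa-type gap estimate, mirroring its ``finite concentration, then rigidity'' strategy. The price is that a 1978 result then depends on the Pilatte and Tao--Ter\"av\"ainen input behind Proposition \ref{prop:gapConc}.
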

The second fact is a special case of a result due to the author \cite{MangerelEq}.
\begin{thm} \label{thm:consec}
Let $F$ be a positive-valued multiplicative function. Assume that
\begin{equation}\label{eq:densefnot1}
\sum_{p : F(p) \neq 1} \frac{1}{p} = \infty.
\end{equation}
Then the set $\{n \in \mb{N} : F(n) = F(n+1)\}$ has logarithmic density $0$.
\end{thm}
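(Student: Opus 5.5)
The plan is to pass to the additive function $g := \log F$. Then $F(n) = F(n+1)$ if and only if $g(n+1) - g(n) = 0$, and the hypothesis \eqref{eq:densefnot1} says that $\mc{P} := \{p : g(p) \neq 0\}$ satisfies $\sum_{p \in \mc{P}} 1/p = \infty$. Proposition \ref{prop:gapConc} is of no use here, since $g$ may well have finite concentration (for instance, $g$ could be bounded). The point is instead that we are bounding the mass of a single \emph{atom} of the distribution of $g(n+1)-g(n)$, not the mass of an interval of fixed length. The model to keep in mind is the Kubilius model: $g(n+1)-g(n) \approx \sum_p Z_p$, where for $p > 2$ the $Z_p$ are independent, $Z_p = g(p)$ with probability $\approx 1/p$, $Z_p = -g(p)$ with probability $\approx 1/p$, and $Z_p = 0$ otherwise. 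For such a sum, the Kolmogorov--Rogozin (Doeblin--L\'evy) inequality bounds the maximal atom by $\ll \left(\sum_{p} \mb{P}(Z_p \neq 0)\right)^{-1/2} \asymp \left(\sum_{p \in \mc{P}} 1/p\right)^{-1/2}$, which tends to $0$. The proof consists in making this heuristic rigorous for logarithmic averages.

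\textbf{Step 1 (reduction to a window of primes).} Fix $w < y$, both large but fixed, with $\Sigma := \sum_{p \in \mc{P} \cap (w,y]} 1/p$ large. Write $g = g_1 + g_2$, where $g_1(n) := \sum_{p^k \| n,\ p \in \mc{P}\cap(w,y]} g(p^k)$. Discarding a set of $n$ of logarithmic density $\ll \sum_{p > w} 1/p^2 + \sum_{w<p\le y} 1/p^2$, we may assume that no $p \in (w,y]$ divides $n(n+1)$ to a power higher than the first. Then $g_1(n+1)-g_1(n) = \sum_{p\in\mc{P}\cap(w,y]} g(p)(1_{p|n+1}-1_{p|n})$.

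\textbf{Step 2 (decoupling via dilation by primes).} This is the step I expect to be the main obstacle: $g_2(n+1)-g_2(n)$ is \emph{not} independent of the divisibility pattern of $n(n+1)$ by primes in $(w,y]$, so the concentration inequality cannot be applied directly. I would handle this with Tao's logarithmic-averaging device (equivalently, the entropy decrement argument), which exploits the near-dilation-invariance of logarithmic density. For $p \in (w,y]$, replacing $n$ by $pn$ (resp.\ $pn - 1$) changes the logarithmic density of a set by a factor of $p$ up to a small error, and it changes $g(n+1)-g(n)$ by exactly $\pm g(p)$ plus a term depending on the new neighbour $pn\pm 1$. Iterating this over a random multiset of primes from $(w,y]$, one builds a genuinely independent family of increments that sits on top of the remaining contribution. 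Averaging over residue classes modulo $\prod_{w<p\le y} p$, which are equidistributed with respect to logarithmic density, one should obtain
$$
\overline{\delta}(\{n : g(n+1)=g(n)\}) \le \sup_{a \in \mb{R}} \mb{P}\Big(\sum_{p \in \mc{P}\cap(w,y]} Z_p = a\Big) + o_{w\to\infty}(1),
$$
with the $Z_p$ independent as in the model. If the direct decoupling proves delicate, the fallback is Fourier analysis. One writes $1_{\{0\}}(t) = \lim_{U\to\infty}\frac{1}{2U}\int_{-U}^U e(ut)\,du$ and applies the logarithmic binary correlation bound \cite[Thm. 3.1]{TT} to the functions $f_u = e(ug)$. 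For $u$ at which $f_u$ is non-pretentious, the correlations are small. For pretentious $u$, one computes the correlation explicitly via the Euler product of the Kubilius model, and it is $\ll \prod_{p\in\mc{P}}(1 - c\,\|u g(p)\|^2/p)$; the divergence of $\sum_{p\in\mc{P}}1/p$ then makes the set of $u \in [-U,U]$ where this product is not small have measure $o(U)$.

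\textbf{Step 3 (conclusion).} By Kolmogorov--Rogozin, the supremum in Step 2 is $\ll \Sigma^{-1/2}$. Since \eqref{eq:densefnot1} allows $\Sigma$ to be made arbitrarily large by first choosing $w$ and then $y$, letting $w \to \infty$ gives upper logarithmic density $0$, hence logarithmic density $0$.
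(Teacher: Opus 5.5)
The paper does not prove this statement; it quotes it as a special case of \cite{MangerelEq}, so there is no in-paper argument to compare with. Judged on its own, your outline has a genuine gap at Step 2, and that step is essentially the whole theorem.

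Step 1 and the model computation are fine. By the Chinese remainder theorem, $g_1(n+1)-g_1(n)$ is distributed exactly like $\sum_p Z_p$, so its largest atom is $\ll\Sigma^{-1/2}$. What is missing is a reason why the atom of $g(n+1)-g(n)$ should be controlled by that of $g_1(n+1)-g_1(n)$. That would require $g_2(n+1)-g_2(n)$ to be roughly independent of $n \bmod \prod_{w<p\le y}p$, and this independence is false in general. Take $F(n)=n^{\lambda}$ with $\lambda\neq0$, which satisfies \eqref{eq:densefnot1}. Then identically $g_2(n+1)-g_2(n)=\lambda\log(1+1/n)-(g_1(n+1)-g_1(n))$, so $g_2$ cancels $g_1$ completely, and the conclusion holds only because $\lambda\log(1+1/n)\neq0$.

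Your dilation step does not manufacture independence either. Passing from $n$ to $pn$ introduces the term $g(pn+1)-g(n+1)$, which nothing controls. The identity that logarithmic averaging actually exploits is $g(pn+p)-g(pn)=g(n+1)-g(n)$ for $p\nmid n(n+1)$, which trades shift $1$ for shift $p$. Turning that into a bound needs the entropy-decrement argument plus a separate treatment of the pretentious case, and none of this is set up. So ``one should obtain'' the displayed inequality is an assertion of the result, not a derivation.

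The Fourier fallback fails on the same example, for a structural reason: any correct proof must use that the equality is \emph{exact}. The approximate statement, that $\{n:|g(n+1)-g(n)|\le\varepsilon\}$ has small logarithmic density for small $\varepsilon$, is false under \eqref{eq:densefnot1}.
\begin{itemize}
\item To make your limit in $U$ uniform in $x$ you must fix $U$ and use a nonnegative majorant of $1_{\{0\}}$. That only bounds the density of $\{n:|g(n+1)-g(n)|\le 1/U\}$, which for $F(n)=n^{\lambda}$ contains every $n\ge|\lambda|U$.
\item Your claimed pretentious bound $\prod_{p\in\mc{P}}(1-c\|ug(p)\|^2/p)$ is wrong. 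Here $e(ug(n))=n^{2\pi iu\lambda}$ and $\frac{1}{\log x}\sum_{n\le x}e(u(g(n+1)-g(n)))/n\to1$, although $\sum_p\|u\lambda\log p\|^2/p=\infty$ for every $u\neq0$.
\end{itemize}
A working argument has to do at least the following.
\begin{itemize}
\item Compute correlations relative to the $\chi_u(n)n^{it_u}$ that $e(ug)$ pretends to be, with patching of the $t_u$ in $u$ as in Section \ref{sec:conc}.
\item Split off the resulting $\lambda\log$, and bound how often $(g-\lambda\log)(n+1)-(g-\lambda\log)(n)$ equals the moving target $-\lambda\log(1+1/n)$.
\item Handle the degenerate case where $g(p)=\lambda\log p$ outside a set of primes with convergent reciprocal sum. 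There the model gives no anti-concentration, and one must instead use that $n\mapsto\lambda\log(1+1/n)$ is injective.
\end{itemize}
None of these ingredients is in your proposal, so as it stands it does not prove the theorem.
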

\subsection{When \eqref{eq:densefnot1} does not hold}
To apply Theorem \ref{thm:consec}, we must first reduce to the case that \eqref{eq:densefnot1} holds.
Key to this reduction is the following simple device for producing large sets with prescribed divisibility conditions, relative to a ``sparse'' set of primes.  
\begin{lem}\label{lem:fix23q}
Fix $q \in \mb{N}$ squarefree with $(q,6) = 1$, and let\footnote{We allow $j_2 = 2$ in order to obtain $2||n$.} $j_1 \in \{-1,0,1\}, j_2 \in \{-1,0,1,2\}$. Let $S$ be a set of primes such that
$$
\sum_{p \in S} \frac{1}{p} < \infty.
$$
Then there is a positive density set $\mc{A} = \mc{A}_{q,j_1,j_2}$ such that for every $n \in \mc{A}$: 
\begin{enumerate}[(a)]
\item $p|n(n-1)(n+1)$ implies that $p \in \{2,3\}$, $p | q$ or $p \notin S$ 
\item if $p^2|n(n-1)(n+1)$ then $p= 2$,
\item $q|n$ and $(n/q,q) = 1$,
\item $3||(n+j_1)$ and $4||(n+j_2)$.
\end{enumerate}
Similarly, given primes $p_1<p_2$ and $j \in \{-1,1\}$ there is a positive density set $\mc{B} = \mc{B}_{p_1,p_2,j}$ such that for every $n \in \mc{B}$, the following holds: 
\begin{enumerate}
\item $p|n(n-1)(n+1)$ implies that $p \in \{2,3,p_1,p_2\}$, or $p\notin S$,
\item if $p^2|n(n-1)(n+1)$ then $p=2$, 
\item $p_1||(n-1)$ and $p_2||(n+1)$, and
\item $4||(n+1)$, $3||(n+j)$.
\end{enumerate}
\end{lem}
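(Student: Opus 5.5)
We obtain both sets by a sieve argument: fix congruence conditions modulo a fixed modulus, and discard the sparse set of integers with a forbidden prime or prime-square divisor. For $\mc{A}$, put $M_0 := 9\cdot 8\cdot q^2$. The Chinese remainder theorem gives a residue class $b \bmod M_0$ such that every $n \equiv b \pmod{M_0}$ satisfies (c) and (d), and also satisfies (a) and (b) at the primes $2,3$ and $p\mid q$. Concretely:
\begin{itemize}
\item modulo $q^2$ we take $n \equiv q\cdot r$ with $(r,q)=1$, so that $p\,\|\, n$ for each $p\mid q$; this also gives $p\nmid n\pm 1$;
\item modulo $9$ we take $n+j_1\equiv 3$ or $6$, so that $3\,\|\,(n+j_1)$ and $3\nmid$ the other two of $n-1,n,n+1$;
\item modulo $8$ we take $n+j_2 \equiv 4$, which gives $4\,\|\,(n+j_2)$.
\end{itemize}
For $j_2=2$ the last condition forces $n\equiv 2 \pmod 8$, so $2\,\|\, n$, and then $n\pm1$ is odd. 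For $j_2\in\{-1,0,1\}$ it is also consistent, since only powers of $2$ exceeding $2$ divide at most one of three consecutive integers. The point is that the three consecutive integers $n-1,n,n+1$ are handled at each small prime independently.

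Let $\mc{A}_0 := \{n\equiv b \pmod{M_0}\}$, which has density $1/M_0$. From $\mc{A}_0$ we remove every $n$ for which some prime $p\notin\{2,3\}$, $p\nmid q$, satisfies either $p\in S$ and $p\mid n(n-1)(n+1)$, or $p^2\mid n(n-1)(n+1)$. Consider any prime $p\nmid 6q$, so that $(p,M_0)=1$. Within the progression $\mc{A}_0$, the condition $p\mid n(n^2-1)$ picks out at most $3$ residue classes mod $p$, so its relative density in $\mc{A}_0$ is at most $3/p$. Likewise $p^2\mid n(n^2-1)$ picks out at most $3$ classes mod $p^2$, of relative density at most $3/p^2$.

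The main obstacle is that there may be infinitely many such primes, so a naive union bound over all of them need not leave anything. We handle this in two steps.
\begin{enumerate}
\item Choose $y$ large so that $\sum_{p\in S,\,p>y} 3/p + \sum_{p>y} 3/p^2 < \tfrac12\prod_{p\le y,\,p\nmid 6q}(1-3/p)\cdot$ (a normalising constant). This is possible because $\sum_{p\in S}1/p<\infty$ and $\sum_p 1/p^2<\infty$.
\item For the finitely many primes $5\le p\le y$ with $p\nmid q$, impose directly the CRT condition $n\not\equiv 0,\pm1 \pmod p$. Since $p\ge5$ this is a nonempty set of classes. It leaves a subprogression $\mc{A}_1\subseteq \mc{A}_0$ of positive density $\delta_1$.
\end{enumerate}
For each $p>y$, the density of $n\in \mc{A}_1$ with $p\mid n(n^2-1)$, $p\in S$, or $p^2\mid n(n^2-1)$, is at most $\delta_1(3/p + 3/p^2)$ up to an error $O(1/x)$ per prime. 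We have to be careful with large primes, where this error is not negligible.
\begin{itemize}
\item For $p\le x^{1/2}$ the errors total $O(x^{1/2})$.
\item For $p^2 > x$, note that $p^2\mid n(n^2-1)$ with $n\le x$ forces $p\le (x+1)^{1/2}$, so these primes contribute nothing.
\item For $p\in S$ with $x^{1/2}<p\le x+1$, the count of $n\le x$ in $\mc{A}_1$ with $p\mid n(n^2-1)$ is at most $3(x/(pM)+1)$. Summing this over all $p$ would give $\pi(x)$, which is too big, so the crude $+1$ cannot be used. Instead we use that $n$ lies in a progression of modulus $M$ coprime to $p$, so the count is $\ll x/(pM)+1$ only when $p\le x$. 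I expect this to be the main technical hurdle.
\end{itemize}
To get around it, I would work with the logarithmic density, or equivalently bound the count only by $\sum_{p\in S, p\le x}3(x/p)$ using $\#\{n\le x: p\mid n\}\le x/p$ directly, with no $+1$ since $p\le x$. The $+1$ only matters when $p>x/M$, and there are at most $O(x/\log x)$ such primes, which gives an $o(x)$ term. The removed set then has upper density at most $\delta_1(\sum_{p\in S,p>y}3/p+\sum_{p>y}3/p^2) + o(1) < \delta_1/2$. Hence the surviving set $\mc{A}$ has lower density at least $\delta_1/2>0$, and it satisfies (a)–(d).

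The set $\mc{B}$ is built identically. The modulus becomes $M_0' = 9\cdot 8\cdot p_1^2p_2^2$, with the conditions
\[
n\equiv 1+p_1u \pmod{p_1^2},\quad p_1\nmid u,
\]
and similarly $n\equiv -1+p_2v \pmod{p_2^2}$, $p_2\nmid v$. We also impose $n+1\equiv 4 \pmod 8$ and $3\,\|\,(n+j)$. We must avoid conflicting conditions when $p_1$ or $p_2$ lies in $\{2,3\}$. If $p_2=2$ there is a clash, since then $2\,\|\,(n+1)$ contradicts $4\,\|\,(n+1)$. Then $p_1<p_2$ forces no $p_1$, so I would assume $p_1,p_2\ge 5$, or else merge the conditions at $2,3$ as in the intended application. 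Once the residue class is fixed, the sieve step is verbatim the one used for $\mc{A}$.
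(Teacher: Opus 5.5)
Your plan is the paper's argument. The paper fixes $n$ modulo $9$, $8$, $q^2$ and $\mathfrak{m}=\prod_{5\le p\le M_0,\,p\nmid q}p$, taking $n\equiv 2 \pmod{\mathfrak m}$, which is one of your admissible classes $n\not\equiv 0,\pm1$. It then removes, by a union bound, those $n$ for which $n(n^2-1)$ has a prime factor $p>M_0$ from $S$ or a factor $p^2$ with $p>M_0$. For $\mathcal{A}$ your version is correct, but three points need straightening out.

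First, the threshold in your step (1) must not involve $\prod_{p\le y}(1-3/p)$. That product decays like $(\log y)^{-3}$, whereas the tail $\sum_{p\in S,\,p>y}1/p$ of a convergent series can decay arbitrarily slowly, so read literally the condition may be unsatisfiable. Your final estimate actually uses the relative density $3/p$ (resp.\ $3/p^2$) inside $\mathcal{A}_1$. This is valid because the modulus of $\mathcal{A}_1$ is coprime to every $p>y$ with $p\nmid 6q$. So the requirement is simply $\sum_{p\in S,\,p>y}3/p+\sum_{p>y}3/p^2<\tfrac12$.

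Second, the ``$+1$'' errors are harmless and there is no technical hurdle. Over all $p\le x+1$ they total $O(\pi(x+1))=o(x)$, which is all a density statement needs. This is how the paper handles it; its ``$o(1)$'' there should read $o(x)$.

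Third, for $\mathcal{B}$ you cannot simply assume $p_1,p_2\ge 5$. Lemma \ref{lem:fequal1} uses $(p_1,p_2,j)=(2,3,1)$, and also $p_1\in\{2,3\}$ with $j=-1$. The compatibility check you need is short:
\begin{itemize}
\item $p_2=2$ is vacuous, as you note.
\item For $p_1=2$, the condition $n\equiv 3\pmod 8$ already gives $2\,\|\,(n-1)$ along with $4\,\|\,(n+1)$.
\item For $p_1=3$, $j=-1$ (resp.\ $p_2=3$, $j=1$), the two conditions at $3$ are literally the same condition $3\,\|\,(n-1)$ (resp.\ $3\,\|\,(n+1)$).
\end{itemize}
The only genuine conflicts are $p_1=3$, $j=1$ and $p_2=3$, $j=-1$, which would force $3$ to divide both $n-1$ and $n+1$. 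In those cases the statement as written is false, but they are never used; the paper's proof passes over this point too. With this check in place, the sieve step for $\mathcal{B}$ is indeed the same as for $\mathcal{A}$.
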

\begin{proof}
We begin with the construction of $\mc{A}$. Let $\e > 0$ and $M_0 = M_0(\e)>q$ be parameters to be chosen shortly. Define 
$$
\mf{m} := \prod_{\ss{5 \leq p \leq M_0 \\ p \nmid q}} p.
$$
We choose $n$ to satisfy the following system of congruences:
\begin{align*}
n \equiv -j_1 + 3 \pmod{9}, &\quad n \equiv -j_2 + 4 \pmod{8}, \\
n \equiv 2 \pmod{\mf{m}}, &\quad n \equiv q \pmod{q^2}.
\end{align*}
These are compatible since $\mf{m}$ is coprime to $6q$, and $q$ is coprime to $6$. For any solution $n$ to this system, it is clear that (c) and (d) hold. Moreover, since $(6,\mf{m}) = 1$, all three of $n,n-1$ and $n+1$ take invertible residue classes modulo $\mf{m}$, so that each of these is coprime to all of the primes $5 \leq p \leq M$ (with only $n$ being divisible by $q$). \\
Set $Q := 72q^2 \mf{m}$, and write any $n$ above in the form $n = Qk + c$, for some $c \pmod{Q}$ satisfying all of the required congruences. The number of $k \leq x$ for which there exists $p > M$ with $p \in S$ such that $p|(Qk+c+j)$ for some $j \in \{-1,0,1\}$ is
$$
\leq 3\max_{j \in \{-1,0,1\}} \sum_{\ss{M_0 < p \leq Q(x+1) \\ p \in S}} \sum_{\ss{k \leq x \\ k \equiv -Q^{-1}(c + j) \pmod{p}}} 1 \leq 3\sum_{\ss{M_0 < p \leq Q(x+1) \\ p \in S}} \left(\frac{x}{p} + 1\right) \leq 3x\sum_{\ss{p > M_0 \\ p \in S}} \frac{1}{p} + o(1),
$$
as $x \ra \infty$. In particular, we can choose $M_0$ large enough such that this is $\leq \e x/2$. Next, the number of $k \leq x$ such that there exists $p > M_0$ such that $p^2|(Qk+c+j)$ for some $j \in \{-1,0,1\}$ is similarly
$$
\leq 3\max_{j \in \{-1,0,1\}} \sum_{\ss{M_0 < p \leq \sqrt{Q(x+1)}}} \sum_{\ss{k \leq x \\ k \equiv -Q^{-1}(c + j) \pmod{p^2}}} 1 \leq 3\sum_{\ss{M_0 < p \leq \sqrt{Q(x+1)} }} \left(\frac{x}{p^2} + 1\right) \leq 3x\sum_{\ss{p > M_0}} \frac{1}{p^2} + o(1).
$$
Picking $M_0$ larger as needed, we can ensure that this is also $\leq \e x/2$. Thus, aside from $\leq \e x$ exceptional $k \leq x$, we can guarantee that if $n = Qk + c$ with $Q$ and $c$ as above, $n-1,n$ and $n+1$ have no square factors $p^2$ with $p > M_0$ and no prime factors from $S$ with $p > M_0$. Taking $\e = 1/2$, say, and $x \ra \infty$, the set of admissible $k$ must have density $\geq 1/2$.\\
But now we note that $n(n-1)(n+1)$ is coprime to all primes $5 \leq p \leq M_0$ besides $p \in \{2,3\}$ and, in the case of $n$, $p|q$. Therefore, $n-1,n+1$ have no square factors $p^2$ with $p > 3$, and since $(n/q,q) = 1$, $n$ similarly has no square factors $p^2$ with $p > 3$. This in fact improves to all $p > 2$ because $3||(n+j_1)$ so none of $n,n-1,n+1$ can be divisible by $9$. Part (b) thus follows. Similarly, $n-1,n+1$ are both indivisible by $p \in S$ with $p > 3$, and similarly for $n$ outside of the primes that divide $q$. Thus, (a) holds as well, and the claim follows with $\mc{A}$ the set of all non-exceptional $n$ thus constructed. \\
The construction of $\mc{B}$ is similar, but simpler. This time, let 
$$
\mf{m}' := \prod_{\ss{5 \leq p \leq M \\ p \neq p_1,p_2}} p. 
$$
As above, we choose $n$ according to a system of congruences, as follows: 
\begin{align*}
n \equiv 1 + p_1 \pmod{p_1^2}, \quad n \equiv -1 + p_2 \pmod{p_2^2}, \\
n \equiv 2 \pmod{\mf{m}'}, \quad n \equiv -j+3 \pmod{9}, \quad n \equiv 3 \pmod{8}.
\end{align*}
Then (3) and (4) hold immediately. This, time, setting $Q' := 72p_1^2p_2^2\mf{m}'$ and choosing $c'$ to be a residue class modulo $Q'$ that satisfies the above system, the argument from above shows that aside from $\leq \e x$ exceptional $k \leq x$, we can choose $n = Q'k + c'$ so that none of $n,n-1$ or $n+1$ has a square factor $p^2$ with $p > M$, and no prime factors $p \in S$ with $p > M$. As above, this implies that $n$ satisfies the conditions (1) and (2) as well. Taking $x \ra \infty$ and letting $\mc{B}$ be the set of all such non-exceptional $n$, we obtain that $\mc{B}$ has positive density, as required.
\end{proof}
We use Lemma \ref{lem:fix23q} to prove the following reduction to the case that $\{p : f(p) \neq 1\}$ is not sparse.
\begin{lem} \label{lem:fequal1}
Let $f: \mb{N} \ra (0,\infty)$ be a multiplicative function, such that 
$$
\sum_{p : f(p) \neq 1} \frac{1}{p} < \infty. 
$$
Suppose that there are at least two primes $p$ such that $f(p) \neq 1$. Then $\mc{L}_f$
has positive natural density.
\end{lem}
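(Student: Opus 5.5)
The plan is to put every prime outside $S:=\{p : f(p)\neq 1\}$ to work and let Lemma \ref{lem:fix23q} force the local configuration. For $n\in\mc{A}_{q,j_1,j_2}$ or $n\in\mc{B}_{p_1,p_2,j}$, each of $n-1,n,n+1$ factors as a power of $2$, at most $3^1$, possibly the prescribed primes $q$ (or $p_1,p_2$), and squarefree primes not in $S$. Since $f(p)=1$ for $p\notin S$, the three values $f(n-1),f(n),f(n+1)$ are determined by the congruence data. The only possible factors are $f(2)$, $f(4)$, $f(3)$, $f(q)$, $f(p_1)$ and $f(p_2)$. Each admissible choice of parameters therefore gives a positive-density set on which one fixed inequality among these finitely many constants decides membership in $\mc{L}_f$. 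The proof reduces to showing that, whenever at least two primes have $f(p)\neq 1$, some admissible choice makes that inequality $f(n)>\max\{f(n-1),f(n+1)\}$ true.

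First I would tabulate the patterns supplied by Lemma \ref{lem:fix23q}. Taking $j_1=0$ puts $3\|n$, so neither neighbour is divisible by $3$. Taking $j_2\in\{0,2\}$ makes $n$ even, with $4\|n$ or $2\|n$, so both neighbours are odd.
\begin{itemize}
\item With $j_1=0$ and $j_2\in\{0,2\}$, both neighbours have $f$-value $1$, while $f(n)=f(3)f(2^{b})f(q)$ with $b\in\{1,2\}$ and $q$ squarefree and coprime to $6$ (including $q=1$).
\item With $j_1=\pm1$, one neighbour picks up the factor $f(3)$ and $f(n)=f(2^b)f(q)$.
\item With $n$ odd ($j_2=\pm1$), the neighbours carry $f(2)$ and $f(4)$ respectively.
\item The sets $\mc{B}_{p_1,p_2,j}$ give configurations where $f(n)\in\{1,f(3)\}$, $f(n-1)=f(2)f(p_1)\cdot(\text{possibly }f(3))$ and $f(n+1)=f(4)f(p_2)\cdot(\text{possibly }f(3))$.
\end{itemize}

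Next I would split into cases on the sign pattern of $\log f$ over the primes in $S$ and over $2,4,3$.

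If some prime $p\geq 5$ has $f(p)>1$, take $q=p$, or $q$ a product of such primes, and adjust $j_1,j_2$ so that the factors $f(3)$ and $f(2^b)$ land on $n$ when they exceed $1$. When they are below $1$, I would move them onto a neighbour, or compare against alternatives within the same family. If $f(3)f(2^b)f(q)>1$ for some $b$, the first pattern already works. Otherwise, comparing the competing patterns, I expect that the two-prime hypothesis rules out every remaining possibility.

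If every prime in $S\setminus\{2,3\}$ has $f(p)<1$, I would use $\mc{B}_{p_1,p_2,j}$. Here both neighbours are depressed by $f(p_1),f(p_2)<1$, and $f(3)$ can be placed on $n$ or kept off it via $j$. The difficulty is that $f(2)$ and $f(4)$ are not controlled by the hypothesis. If they are large, I would use a variant of $\mc{B}$ in which $n$ is even and $p_1\|n-1$, $p_2\|n+1$. This variant comes from the same CRT construction with $n\equiv 0$ or $4 \pmod 8$, and it places the $2$-power on $n$. When only one or neither of the two special primes is $\geq 5$, i.e.\ the non-unit primes include $2$ or $3$, I would handle things directly with the $\mc{A}$ patterns and $q=1$. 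For instance, $f(3)>1$ gives $n$ with $3\|n$ and $2^b\|n$ chosen so that $f(3)f(2^b)>1$. If that fails for both $b$, both $f(2)$ and $f(4)$ are below $1/f(3)$, and the odd-$n$ patterns then push those small factors onto the neighbours.

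The main obstacle is this finite but fiddly casework. One has to check that for every sign configuration of $(f(2),f(4),f(3),f(p_1),f(p_2))$ compatible with \emph{two} primes having $f(p)\neq1$, at least one of the roughly dozen congruence patterns yields a strict inequality. The single-prime counterexample shows that the hypothesis is used exactly here. I would organise the check by first asking whether $\max\{f(2),f(4)\}$ exceeds $1$, and otherwise placing the $2$-power on a neighbour. Positive natural density then follows directly from the positive density of $\mc{A}$ or $\mc{B}$ in Lemma \ref{lem:fix23q}.
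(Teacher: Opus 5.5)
There is a genuine gap. You have the right toolkit (the sets of Lemma \ref{lem:fix23q} plus sign casework), but the casework is the whole content of the lemma, and you only anticipate it. Worse, the one concrete prescription you give for the delicate regime is false. You say that when the non-unit primes include $2$ or $3$ you would argue directly with the $\mc{A}$ patterns and $q=1$. Here is a counterexample.
\begin{itemize}
\item Take a prime $p\ge5$, set $f(2)=f(p^k)=1/2$ for all $k\ge1$, and $f=1$ at every other prime power, so $f(3)=f(4)=1$.
\item For $n\in\mc{A}_{q,j_1,j_2}$ with any admissible $q$, property (a) keeps $p$ off $n\pm1$. All local factors are $\le 1$, so $f(n)\le1$.
\item In every pattern some neighbour of $n$ is either odd or exactly divisible by $4$, and so has $f$-value $1\ge f(n)$. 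Hence no $\mc{A}$-set meets $\mc{L}_f$.
\item Yet $\mc{B}_{2,p,-1}\subseteq\mc{L}_f$: there $f(n)=1$, $f(n-1)=f(2)=1/2$ and $f(n+1)=f(4)f(p)=1/2$.
\end{itemize}
The structural point you are missing is this. When $f(p)\le1$ for every prime and $f(4)\le1$, all values on these sets are $\le1$. You therefore need $f(n)=1$ with \emph{both} neighbours strictly below $1$, i.e.\ two primes of $S_-$ placed on different neighbours. The sets $\mc{A}$ only ever place $q$ on $n$, and the $2$- and $3$-adic factors alone need not depress both neighbours.

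The argument closes once you split on whether some local factor exceeds $1$, i.e.\ whether $S_+:=\{p:f(p)>1\}\neq\emptyset$ or $f(4)>1$; this is how the paper organises it. In that case a greedy $\mc{A}$-pattern works and the two-prime hypothesis is never used:
\begin{itemize}
\item take $q=p$ for some $p\ge5$ in $S_+$ if there is one, else $q=1$;
\item take $j_1=0$ if $f(3)>1$, and $j_1=-1$ otherwise;
\item take $j_2=2$ if $f(2)>1$, $j_2=0$ if $f(2)\le1<f(4)$, and $j_2=1$ otherwise.
\end{itemize}
Every factor placed on $n$ is then $\ge1$ with at least one $>1$, and every factor landing on $n\pm1$ is $\le1$. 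So your expectation that the two-prime hypothesis is needed to finish the case $f(p)>1$, $p\ge5$, is misplaced, and your even-$n$ variant of $\mc{B}$ is unnecessary.

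In the complementary case $S_+=\emptyset$, $f(4)\le1$, let $p_1<p_2$ be the two smallest primes of $S=S_-$. Use $\mc{B}_{p_1,p_2,j}$ with $j=1$ if $(p_1,p_2)=(2,3)$ and $j=-1$ otherwise. This choice is forced by the compatibility of $3\|n+j$ with $p_1\|n-1$ and $p_2\|n+1$ when $3\in\{p_1,p_2\}$, a point your sketch never checks. The case $p_1=2$ is automatic, since $2\|n-1$ for all $n$ in these sets. Then $f(n)=1$, $f(n-1)\le f(p_1)<1$ and $f(n+1)\le f(4)f(p_2)\le f(p_2)<1$. Note also that $3\nmid n$ for every $n\in\mc{B}_{p_1,p_2,j}$, so $f(3)$ cannot be placed on $n$ there, contrary to your description of $\mc{B}$.
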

\begin{proof}
Define the sets
$$
S_+ := \{p : \ f(p) > 1\}, \quad S_- := \{p : \ f(p) < 1\}.
$$
By assumption, $S := S_+ \cup S_-$ has size $|S| \geq 2$. We consider two cases.\\ \\
\textbf{Case 1:} $|S_+| \geq 1$, or $f(4) > 1$. \\
Let us note the following general consequence of Lemma \ref{lem:fix23q}. If $n \in \mc{A}_{q,j_1,j_2}$ in the notation there, then $q|n$, and $f(p) = 1$ for all $(p,6) = 1$ with $p|(n/q)(n-1)(n+1)$. Furthermore, there are integers $a,b,c$ with $(a,q) = (b,3) = (c,2) = 1$, such that
$$
n = qa, \quad n+j_1 = 3b, \quad n+j_2 = 4c.
$$
Finally, the only square factor of any of $n,n-1$ or $n+1$ is $4$. It is therefore immediate that $f(n-1),f(n+1),f(n)$ depend only on $f(q)$ and the values $f(2),f(3)$ and $f(4)$. \\
Given this, we now consider the following subcases of Case 1, showing that in each one there is a choice of parameters $q \in \mb{N}$ coprime to $6$, $j_1 \in \{-1,0,1\}$ and $j_2 \in \{-1,0,1,2\}$ such that $\mc{A}_{q,j_1,j_2}$ is a subset of $\mc{L}_f$. This will thus imply that $\mc{L}_f$ has positive density, as claimed. \\
\underline{Case 1.1:} $2,3 \in S_+$. \\
Take $q = 1$, $j_1 = 0$, $j_2 = 2$, so that $2||n$ and $3||n$, while both $n-1,n+1$ are coprime to $6$. We thus find that
$$
f(n) = f(2)f(3) > 1, \ f(n-1) = f(n+1) = 1,
$$
as required. \\
\underline{Case 1.2:} $2 \in S_+$, $3 \notin S_+$. \\
Take $q = 1$, $j_1 = -1$, $j_2 = 2$, so that $2||n$ and $3|(n-1)$. Then $n-1,n+1$ are both coprime to $2$, and $n+1$ is coprime to $3$. We thus find that
$$
f(n) = f(2) > 1, \ f(n-1) = f(3) \leq 1, \ f(n+1) = 1.
$$
\underline{Case 1.3:} $f(4) > 1$, $3 \notin S_+$. \\
Take $q = 1$, $j_1 = -1$ and $j_2 = 0$. Then $4||n$, $3||(n-1)$, $n-1$ is odd and $n+1$ is coprime to $6$. We obtain
$$
f(n) = f(4) > 1, \ f(n-1) = f(3) \leq 1, \ f(n+1) = 1.
$$
\underline{Case 1.4:} $f(4) > 1$, $3 \in S_+$. \\
Take $q = 1$, $j_1 = j_2 = 0$. Then $4||n$, $3||n$ and $n-1,n+1$ are both coprime to $6$. We obtain
$$
f(n) = f(4)f(3) > 1, \ f(n-1) = f(n+1) = 1.
$$
\underline{Case 1.5:} $f(4),f(2)\leq 1$, $3 \in S_+$. \\
Take $q = 1$, $j_1 = 0$ and $j_2 = -1$. Then $4||(n-1)$, $2||(n+1)$, $3||n$, and both $n-1,n+1$ are coprime to $3$. We obtain
$$
f(n) = f(3) > 1, \ f(n-1) = f(4) \leq 1, \ f(n+1) = f(2) \leq 1.
$$
\underline{Case 1.6:} $f(2),f(3),f(4) \leq 1$, $p \in S_+$ for some $p > 3$. \\
Take $q = p$, $j_1 = -1$ and $j_2 = 1$. Then $3||(n-1)$, $4||(n+1)$, and $n$ is coprime to $6$. We obtain
$$
f(n) = f(p) > 1, \ f(n-1) = f(2)f(3) \leq 1, \ f(n+1) = f(4) \leq 1.
$$
The above subcases are exhaustive, so the proof of the lemma in Case 1 is complete. \\ \\
\textbf{Case 2:} $S_+ = \emptyset$ and $f(4) \leq 1$\\
Since $|S| = |S_-| \geq 2$ we may pick minimal, distinct primes $p_1 < p_2$ from $S_-$. We show that there is a suitable choice of $j \in \{-1,1\}$ such that, using Lemma \ref{lem:fix23q}, $\mc{B}_{p_1,p_2,j}$ is a positive density subset of $\mc{L}_f$. We consider two subcases. The general consequence of $n \in \mc{B}_{p_1,p_2,j}$ is that $n,n-1$ and $n+1$ has no square factor besides $4$, and that $4||(n+1)$. Also, $n$ has no prime factors from $S$ besides possibly for $2,3,p_1$ and $p_2$. In particular, the values of $f(n)$, $f(n-1)$, $f(n+1)$ are determined by $f(2),f(3),f(4),f(p_1)$ and $f(p_2)$. \\
\underline{Case 2.1:} $2,3 \in S_-$ \\
Here we take $j = 1$, $p_1 = 2$ and $p_2 = 3$, so that $2||(n-1)$ and $3||(n+1)$, which is compatible with the conditions $4||(n+1)$ and $3||(n+j)$. Since $n$ is coprime to $6$, the choice of $p_1,p_2$ guarantees that $n$ is squarefree and has no prime factors in $S$. Thus, $f(n) = 1$. On the other hand, we have $2||(n-1)$ and as $S_+$ is empty, $f((n-1)/2),f((n+1)/12) \leq 1$. It follows that
\begin{align*}
f(n-1) &= f(2) f((n-1)/2) \leq f(2) < 1 \\
f(n+1) &= f(3)f(4) f((n+1)/12) \leq f(3) < 1.
\end{align*}
so that $n \in \mc{L}_f$ in this case. \\
\underline{Case 2.2:} $2 \notin S_-$ or $2 \in S_-, 3 \notin S_-$\\
Here we take $j = -1$, $3 \leq p_1 < p_2$. Then $3||(n-1)$, which is compatible with $p_1||(n-1)$ whether $p_1 \geq 3$ (i.e., $2 \notin S_-$) or $p_1 = 2$ and $p_2>3$ (i.e., when $2 \in S_-$ and $3 \notin S_-$).
As above, since $4||(n+1)$, $n$ must be squarefree and coprime to $6$, and is necessarily also coprime to $p_1,p_2$ since $p_1||(n-1)$ and $p_2||(n+1)$. Thus, $n$ has no prime factors from $S_-$. Therefore, $f(n) = 1$.  On the other hand, $n-1$ and $(n+1)/4$ are both squarefree, and since $S_+$ is empty we have that $f((n-1)/[3,p_1]) \leq 1$ and $f((n+1)/(4p_2)) \leq 1$. In addition, $f(4) \leq 1$ as well, so since $(n-1)/p_1$ and $p_1$ are coprime we obtain 
$$
f(n-1) = f([3,p_1])f((n-1)/[3,p_1]) \leq f([3,p_1]) < 1.
$$
Similarly, as $(n+1)/(4p_2)$ is coprime to $4p_2$, and $p_2>2$,
$$
f(n+1) = f(4)f(p_2) f((n+1)/(4p_2)) \leq f(p_2) < 1.
$$
It follows that $n \in \mc{L}_f$ in this case as well, and the proof is complete.
\end{proof}

\subsection{When \eqref{eq:densefnot1} does hold}
We may now assume that \eqref{eq:densefnot1} does hold. Here, we use ``clipping'' in a different way than what was done in Section \ref{sec:RedtoFinConc}, to show the following. 
\begin{lem} \label{lem:clipLg}
Suppose there exists $P$ with $\Omega(P) \leq 1$ and that $\sup_{(n,P) = 1} |g_{\lambda}(n)| < \infty$. Then there exists an additive function $h$ with $\sup_n |h_{\lambda}(n)| < \infty$ and $\mc{L}_h \subseteq \mc{L}_g$.
\end{lem}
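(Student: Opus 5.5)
The plan is to modify $g$ only at powers of the single exceptional prime, and to clip those values with $\phi_K$ from Lemma \ref{lem:clipProp}. Throughout we identify $\mc{L}_g$ with $\mc{L}_f$, i.e.\ the set of $n$ with $g(n) > \max\{g(n-1),g(n+1)\}$. Set $C := \sup_{(n,P)=1}|g_\lambda(n)| < \infty$.

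If $P = 1$, we simply take $h := g$, and there is nothing to prove. Otherwise $P = p_0$ is prime. We define $h$ to be the additive function with
$$
h(p^k) := g(p^k) \quad (p \neq p_0), \qquad h(p_0^k) := \lambda \log(p_0^k) + \phi_K(g_\lambda(p_0^k)),
$$
where $K$ is a constant to be fixed, say $K > 2C + 2|\lambda|\log 3 + 1$.

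\emph{Boundedness of $h_\lambda$.} Every $m$ factors as $m = p_0^\nu m'$ with $p_0 \nmid m'$. Then $h_\lambda(m) = \phi_K(g_\lambda(p_0^\nu)) + g_\lambda(m')$, where we interpret $\phi_K(g_\lambda(1)) = 0$. Hence $|h_\lambda(m)| \le K + C$ for all $m$. Note also that $g(m) - h(m) = a - \phi_K(a)$ with $a := g_\lambda(p_0^\nu)$. In particular $g(m) = h(m)$ whenever $p_0 \nmid m$, and otherwise the sign of $g(m)-h(m)$ is controlled by monotonicity of $\phi_K$.

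\emph{Containment $\mc{L}_h \subseteq \mc{L}_g$.} Let $n \in \mc{L}_h$ (with $n \ge 2$), so $h(n) > h(n\pm1)$. We first show $g(n) \ge h(n)$. If $p_0 \nmid n$ this is an equality. If $p_0^\nu \| n$ with $\nu \ge 1$, set $a := g_\lambda(p_0^\nu)$. Suppose $a < -K$. Then $h_\lambda(n) = -K + g_\lambda(n') \le -K + C$. At least one neighbour $n+\eta$ is coprime to $p_0$, and for it $h_\lambda(n+\eta) = g_\lambda(n+\eta) \ge -C$. Since $|\lambda\log(n+\eta) - \lambda\log n| \le |\lambda|\log 3$, the choice of $K$ gives $h(n) < h(n+\eta)$, a contradiction. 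Hence $a \ge -K$, so $\phi_K(a) \le a$ and $g(n) \ge h(n)$.

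Next we show $g(n\pm1) \le h(n\pm1)$, treating each neighbour $m = n\pm1$ separately. This covers the case $p_0 = 2$, where both neighbours may be divisible by $p_0$. If $p_0 \nmid m$ we have equality. Otherwise write $a := g_\lambda(p_0^\nu)$ with $p_0^\nu \| m$. If $a > K$, then $h_\lambda(m) \ge K - C$. Meanwhile $n$ is coprime to $p_0$ when $p_0 \ge 3$, and when $p_0 = 2$ we have $n$ odd, so in either case $h_\lambda(n) = g_\lambda(n) \le C$. The choice of $K$ then forces $h(m) > h(n)$, a contradiction. Thus $a \le K$, so $\phi_K(a) \ge a$ and $g(m) \le h(m)$.

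Combining the two steps gives $g(n) \ge h(n) > h(n\pm1) \ge g(n\pm1)$, so $n \in \mc{L}_g$.

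The main obstacle is the bookkeeping when two of $n-1,n,n+1$ are divisible by $p_0$, which only happens for $p_0 = 2$; handling each neighbour separately, as above, resolves it. The other point requiring care is making the constant $K$ absorb the $\lambda\log$ discrepancies uniformly in $n$, rather than only for large $n$, so that the containment holds exactly and not merely up to finitely many exceptions.
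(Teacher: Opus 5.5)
Your proposal is correct and follows essentially the paper's proof. You use the same clipped function $h$ (changing $g$ only at powers of $p_0$ via $\phi_K$, with $K$ of size about $2C$ plus a multiple of $|\lambda|$), and the same key point: clipping in the harmful direction, a very negative $g_\lambda(p_0^\nu)$ at $n$ or a very positive one at $n\pm1$, is ruled out by a size comparison with $n\in\mc{L}_h$. Your sandwich $g(n)\ge h(n)>h(n\pm1)\ge g(n\pm1)$ is simply a tidier packaging of the paper's sign casework on the rearranged inequalities \eqref{eq:p_0nCase} and \eqref{eq:p_0NotnCase}.
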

\begin{proof}
If $P = 1$ then the claim is immediate from taking $h = g$. Thus, we may assume in the sequel that $P = p_0$. \\
Let 
$$
C := \sup_{p_0 \nmid n} |g_{\lambda}(n)| + 100 |\lambda|,
\quad K := 2C + 1.
$$ 
Letting $\phi_K(t) := \text{sign}(t) \min\{K,|t|\}$ as before, we define the additive function $h$ on prime powers via 
$$
h(p^k) := \begin{cases} g(p^k) &\text{ if } p \neq p_0 \\ \lambda \log p_0^k + \phi_K(g_{\lambda}(p_0^k)) &\text{ if } p = p_0. \end{cases}
$$
Thus, we clearly have 
$$
\sup_n |h_{\lambda}(n)| \leq \sup_{p_0 \nmid n} |g_{\lambda}(n)| + \sup_{k \geq 1} |h_{\lambda}(p_0^k)| \leq C + K < \infty.
$$
Next, let $n \in \mc{L}_h$, and consider first that $p_0^\nu||n$ for some $\nu \geq 1$. Then $g(n\pm 1) = h(n\pm 1)$ and $g(n/p_0^{\nu}) = h(n/p_0^{\nu})$. Therefore, we have
$$
h(p_0^\nu) + g(n/p_0^\nu) > \max\{g(n-1),g(n+1)\}.
$$
Subtracting $\lambda \log n$ from both sides and rearranging, we equivalently have
\begin{equation}\label{eq:p_0nCase}
h_{\lambda}(p_0^\nu) > \max\{g_{\lambda}(n-1) + \lambda\log(1-1/n) , g_{\lambda}(n+1) + \lambda \log(1+1/n)\} - g_{\lambda}(n/p_0^\nu).
\end{equation}
In particular, uniformly over $n$, there is an absolute constant $c \in (0,100)$ such that
\begin{equation}\label{eq:withc}
h_{\lambda}(p_0^{\nu}) > \max\{g_{\lambda}(n-1) - g_{\lambda}(n/p_0^\nu), g_{\lambda}(n+1)-g_{\lambda}(n/p_0^\nu)\} - c|\lambda|. 
\end{equation}
Note that $h_{\lambda}(p_0^{\nu})$ has the same sign as $g_{\lambda}(p_0^\nu)$, and if the latter is $> K$ in absolute value then $|h_{\lambda}(p_0^\nu)| = K$. Consider the following (exhaustive) subcases: 
\begin{enumerate}[(a)]
\item if the RHS of \eqref{eq:p_0nCase} is positive then so is $h_{\lambda}(p_0^\nu)$. As $\phi_K(t) \leq t$ for all $t \geq 0$, 
we can replace $h$ by $g$ in \eqref{eq:p_0nCase}. In particular, adding $\lambda \log n$ back and rearranging, we get that
$$
g(n) > \max\{g(n-1),g(n+1)\},
$$
i.e., $n \in \mc{L}_g$.
\item If the RHS of \eqref{eq:p_0nCase} is negative and the LHS is non-negative then replacing $h_{\lambda}(p_0^\nu)$ by $g_{\lambda}(p_0^\nu)$ maintains the non-negativity of the LHS, and the inequality still holds. We obtain $n \in \mc{L}_g$ as in (a).
\item Finally, if the RHS of \eqref{eq:p_0nCase} is negative and the LHS is also negative, then because $c < 100$ the RHS in \eqref{eq:withc} has absolute value $< 2C$. It follows on multiplying both sides of \eqref{eq:p_0nCase} by $-1$ that
$$
0 < |h_{\lambda}(p_0^\nu)| < 2C < K
$$
Since $\phi_K(t) = t$ whenever $|t| \leq K$, we find that $g_{\lambda}(p_0^\nu) = h_{\lambda}(p_0^\nu)$, and so \eqref{eq:p_0nCase} continues to hold with $g$ replaced by $h$. 
\end{enumerate}
Thus, whenever $p_0 | n$ and $n \in \mc{L}_h$, $n \in \mc{L}_g$ as well. \\
Next, we suppose $p_0|(n^2-1)$. Without loss of generality we may assume that $p_0|(n+1)$; the proof is the same if $p_0|(n-1)$ instead (in the case where $p_0 = 2$, we may similarly only focus on $n+1$). In this case, in place of \eqref{eq:p_0nCase}, whenever $n \in \mc{L}_h$ we have
\begin{equation*}
g_{\lambda}(n) > g_{\lambda}((n+1)/p_0^\nu) + h_{\lambda}(p_0^\nu) + \lambda \log(1+1/n).
\end{equation*}
Rearranging, we then get that
\begin{align}\label{eq:p_0NotnCase}
h_{\lambda}(p_0^\nu) < g_{\lambda}(n) - g_{\lambda}((n+1)/p_0^\nu) - \lambda\log(1+1/n).
\end{align}
We do similar case work as above. 
\begin{enumerate}[(a)]
\item If the RHS of \eqref{eq:p_0NotnCase} is positive and the LHS is non-positive then replacing $h$ by $g$ preserves the inequality.
\item If the RHS of \eqref{eq:p_0NotnCase} is positive and the LHS is positive then (again since $c < 100$) we have
$$
0 < h_{\lambda}(p_0^\nu) < 2C < K,
$$
and so $g_{\lambda}(p_0^\nu) = h_{\lambda}(p_0^\nu)$ in \eqref{eq:p_0NotnCase}.
\item If the RHS of \eqref{eq:p_0NotnCase} is negative then the LHS is negative, and on multiplying both sides by $-1$, we have
$$
|h_{\lambda}(p_0^\nu)| > |g_{\lambda}(n) - g_{\lambda}((n+1)/p_0^\nu)- \lambda\log(1+1/n)|.
$$
Since $h_{\lambda}$ and $g_{\lambda}$ share the same sign and $|h_{\lambda}| \leq |g_{\lambda}|$ by Lemma \ref{lem:clipProp}, the inequality \eqref{eq:p_0NotnCase} continues to hold with $h$ replaced by $g$.
\end{enumerate}
We deduce that when $p_0|(n+1)$, if $n \in \mc{L}_h$ then $n \in \mc{L}_g$. The claim with $n-1$ in place of $n+1$ is the same. The proof is now complete.
\end{proof}

\begin{proof}[Proof of Proposition \ref{prop:omega1}]
Assume that (i) fails, so that $\delta(\mc{L}_f)\delta(\mc{S}_f) = 0$. By \eqref{eq:ftoreciprocal}, replacing $f$ by $1/f$ as before, we may assume that $\delta(\mc{L}_f) = 0$. By Lemma \ref{lem:clipLg} we may replace $f$ by $\tilde{f} = e^h$, wherein $\delta(\mc{L}_{\tilde{f}}) = 0$ and $\sup_n |h_{\lambda}(n)| < \infty$.\\
If $\tilde{f}(p) = 1$ for all but at most one prime $p$ then (ii) is verified with $c = 0$. Thus, we may assume otherwise that there are at least two primes $p_1,p_2$ with $\tilde{f}(p_j) \neq 1$. By Lemma \ref{lem:fequal1} we have $\sum_{p : \tilde{f}(p) \neq 1} 1/p = \infty$, otherwise $\mc{L}_{\tilde{f}}$ would have positive density, contradicting $\delta(\mc{L}_{\tilde{f}}) = 0$, as just established. Then, by Theorem \ref{thm:consec} the set of $n$ such that either $\tilde{f}(n) = \tilde{f}(n+1)$ or $\tilde{f}(n) = \tilde{f}(n-1)$ has logarithmic density $0$. Let $(m_N)_N$ denote the sequence of $n$ for which either $\tilde{f}(n) \in \{\tilde{f}(n+1), \tilde{f}(n-1)\}$, or $n \in \mc{L}_{\tilde{f}}$. Let $N$ be large and define $x_N := m_N-1$. Then
$$
\{m_1,\ldots,m_{N-1}\} = \mc{M}_h(x_N;1)
$$
in the notation of Lemma \ref{lem:valleys}. Setting $m_0 := 0$, as in the proof of Lemma \ref{lem:MRclipped}, we can partition $[0,x]$ as a union of intervals
$$
\bigcup_{0\leq j \leq N-1} [m_j,\ell_j) \cup [\ell_j,m_{j+1}),
$$
where $\tilde{f}$ (and thus $h$) is decreasing on $(m_j,\ell_j)$ and increasing on $[\ell_j,m_{j+1})$, for each $j$.
Therefore, since $\mc{M}_h(1)$ has logarithmic density $0$, we see that
\begin{align*}
&\frac{1}{\log x_N} \sum_{n \leq x_N} \frac{|h_{\lambda}(n+1)-h_{\lambda}(n)|}{n} = o_{N \ra \infty}(1) + \frac{1}{\log x_N} \sum_{n \leq x_N} \frac{|h(n+1)-h(n)|}{n} \\
&\leq o_{N \ra \infty}(1) + \frac{1}{\log x_N} \sum_{1 \leq j\leq N-1} \left(\sum_{m_j < n < \ell_j} \frac{h(n)-h(n+1)}{n} + \sum_{\ell_j \leq n < m_{j+1}} \frac{h(n+1)-h(n)}{n}\right) \\
&\leq o_{N \ra \infty}(1) + \frac{1}{\log x_N} \sum_{1 \leq j\leq N-1} \frac{1}{m_j}\left(\sum_{m_j < n < \ell_j} (h_\lambda(n)-h_\lambda(n+1)) + \sum_{\ell_j \leq n < m_{j+1}} (h_\lambda(n+1)-h_\lambda(n))\right) \\
&\leq o_{N \ra \infty}(1) + \frac{1}{\log x_N}\sum_{1 \leq j \leq N-1} \frac{1}{m_j} (h_{\lambda}(m_j+1) + h_{\lambda}(m_{j+1}) - 2h_{\lambda}(\ell_j)).
\end{align*}
Combining $\sup_{n \geq 1} |h_{\lambda}(n)| < \infty$ with the fact that $\mc{M}_h(1)$ has logarithmic density $0$ by Theorem \ref{thm:consec} and the assumption $\delta(\mc{L}_f)$, it follows that
\begin{align*}
\frac{1}{\log x_N} \sum_{n \leq x_N} \frac{|h_{\lambda}(n+1)-h_{\lambda}(n)|}{n} 
= o_{N \ra \infty}(1).
\end{align*}
By Theorem \ref{thm:KataiLog}, we see that $h_{\lambda}(n) = c\log n$ for some $c$, and since $h_{\lambda}$ is bounded we get $c = 0$. We therefore conclude that $g(p) = \lambda \log p$ at all but at most the single prime $p = p_0$,
and the claim follows.  
\end{proof}

\begin{appendix}
\section{Appendix: Proofs of the key lemmas from \cite{ErdAI}}
Given that the paper \cite{ErdAI} has been left unpublished (and moreover does not appear on arXiv or a comparably reliable repository), we will in this appendix reproduce (with some elaboration) the proofs of the main lemmas quoted throughout this paper. All proofs have been written by the author, and in several cases the arguments differ from those in \cite{ErdAI}.\\
For the convenience of the reader, we restate the relevant claims here. Recall that $g: \mb{N} \ra \mb{R}$ is an additive function, $K \geq 1$ and $M \in (0,1/4)$ are fixed parameters, and $\phi_K(t) := \text{sign}(t) \min\{|t|,K\}$. For each $x$ we define parameters $\lambda_x \in \mb{R}$ and $s_x > 0$ such that
$$
M := \frac{1}{s_x^2}\left(\lambda_x^2 + \sum_{p \leq x} \frac{\min\{s_x^2,g_{\lambda_x}(p)^2\}}{p}\right)
$$
The result below, which is \cite[Lem. 4.1]{ErdAI}, gives the existence of $s_x$ and $\lambda_x$ for each $M$, as well as some of their properties of relevance in the sequel.
\begin{lem}[cf. Lem. 4.1 of \cite{ErdAI}]\label{lem:4point1}
Let $g$ be a real-valued additive function that satisfies
\begin{equation}\label{eq:Vtoinf}
V_g(x) = \inf_{\lambda \in \mb{R}} \sum_{p \leq x} \frac{\min\{1,(g(p)-\lambda \log p)^2\}}{p} \ra \infty \text{ as } x \ra \infty.
\end{equation}
For each fixed $M > 0$ and all sufficiently large $X$ there are $s_X > 0$ and $c_X \in \mb{R}$ such that 
\begin{equation}\label{eq:McXsXDef}
M = c_X^2 + \sum_{p \leq X} \frac{\min\{1,(g(p)/s_X - c_X\log p)^2\}}{p}.
\end{equation}
Furthermore, setting $\lambda_X := c_Xs_X$, the following statements hold:
\begin{enumerate}[(a)]
\item As $X \ra \infty$, $s_X \ra \infty$ and $c_X \ra 0$. 
\item We have
$$
\sum_{\ss{p \leq X \\ |g_{\lambda_X}(p)| \leq s_X}} \frac{g_{\lambda_X}(p)\log p}{p} = \lambda_X.
$$
\end{enumerate}
\end{lem}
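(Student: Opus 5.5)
The plan is to obtain $(s_X,c_X)$ by a two-stage optimisation. First I minimise in $c$ for each fixed $s$. Then I tune $s$ by the intermediate value theorem. Property (b) will come out as the first-order condition of the minimisation in $c$. For $s>0$ and $c\in\mb{R}$ put
$$
F_X(s,c) := c^2 + \sum_{p \leq X} \frac{\min\{1,(g(p)/s - c\log p)^2\}}{p}, \qquad \Phi_X(s) := \min_{c \in \mb{R}} F_X(s,c).
$$
The minimum exists. Indeed, $F_X(s,\cdot)$ is continuous and $F_X(s,c)\geq c^2$, so it suffices to minimise over $|c|\le \sqrt{F_X(s,0)}\le (\sum_{p\le X}1/p)^{1/2}$, a compact range independent of $s$. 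As a consequence, $\Phi_X$ is continuous in $s$, being a minimum over a fixed compact set of a jointly continuous function.

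Next I check the two ends of the range of $s$. For $s\to\infty$, taking $c=0$ gives $\Phi_X(s)\le \sum_{p\le X}\min\{1,g(p)^2/s^2\}/p\to 0$, since $X$ is fixed. For the other end, fix $S\ge 1$ and let $s\le S$. Writing $\lambda=cs$, we have $(g(p)/s-c\log p)^2=g_\lambda(p)^2/s^2$. Using $\min\{1,t^2/S^2\}\ge S^{-2}\min\{1,t^2\}$, this gives $\Phi_X(s)\ge S^{-2}V_g(X)$. By \eqref{eq:Vtoinf} the right-hand side exceeds $M$ once $X\ge X_0(S,M)$. Hence $\Phi_X(s)>M$ for all $s\le S$. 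The intermediate value theorem then yields $s_X>S$ with $\Phi_X(s_X)=M$, and I take $c_X$ to be a minimiser of $F_X(s_X,\cdot)$. This gives \eqref{eq:McXsXDef}, and since $S$ was arbitrary it also gives $s_X\to\infty$.

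For $c_X\to 0$ I argue by contradiction. Suppose $|c_X|\ge\delta>0$ along a subsequence. Fix $Y$ so large that $\sum_{p\le Y}\min\{1,\delta^2\log^2p/4\}/p>M$. Since $s_X\to\infty$, we have $g(p)/s_X\to0$ for each $p\le Y$. Hence, for large $X$, $|g(p)/s_X-c_X\log p|\ge \tfrac{\delta}{2}\log p$ for all $p\le Y$ (note $\log p\geq\log 2$). Then $M\ge\sum_{p\le Y}\min\{1,\delta^2\log^2p/4\}/p>M$, a contradiction. This proves (a).

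Part (b) is where I expect the real difficulty: the map $c\mapsto F_X(s_X,c)$ is not differentiable where $|g(p)/s_X-c\log p|=1$. I would first show that no prime sits on this boundary at the minimiser $c_X$. Each summand $\psi_p(c):=\min\{1,(a_p-c\log p)^2\}$, with $a_p=g(p)/s_X$, has a concave kink at the boundary. On the inner side its one-sided derivative there is $\mp2\log p\neq0$, and on the outer side it is $0$. So, at a boundary point, $\psi_p'(c^-)-\psi_p'(c^+)=2\log p>0$. Every other summand, and $c^2$, is differentiable at $c_X$. Thus if some prime were on the boundary, then $F'(c_X^-)>F'(c_X^+)$, which is incompatible with a local minimum, since that requires $F'(c_X^-)\le0\le F'(c_X^+)$. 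Hence $F_X(s_X,\cdot)$ is differentiable at $c_X$, and its derivative vanishes there:
$$
2c_X - 2\sum_{\ss{p\le X\\ |g(p)/s_X-c_X\log p|<1}} \frac{(g(p)/s_X-c_X\log p)\log p}{p}=0.
$$
Multiplying by $s_X/2$, and noting that $|g(p)/s_X-c_X\log p|<1$ is the same condition as $|g_{\lambda_X}(p)|<s_X$, which by the no-boundary step agrees with $|g_{\lambda_X}(p)|\le s_X$, gives exactly (b).
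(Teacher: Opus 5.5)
Your proof is correct. Its overall architecture matches the paper's: minimise in $c$, tune $s$ by the intermediate value theorem, and read off (b) as a first-order condition. Two steps, however, are carried out differently.

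\textbf{Existence and $s_X\to\infty$.} The paper shows that $W_X(s)$ is non-decreasing in $X$ and non-increasing in $s$. It then selects the \emph{largest} root $s_X$ of $W_X(s)=M$, deduces from maximality that $X\mapsto s_X$ is non-decreasing, and rules out a finite limit. Your uniform bound $\Phi_X(s)\ge S^{-2}V_g(X)$ for all $0<s\le S$ shows directly that every root exceeds $S$ once $X\ge X_0(S,M)$. So any choice of root gives $s_X\to\infty$, and no monotonicity or root selection is needed. You also justify the continuity of $\Phi_X$ via the fixed compact $c$-range, which the paper simply takes as clear.

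\textbf{Part (b).} The paper sidesteps the kinks. Prime by prime, it freezes the branch of the minimum that is active at $c_X$: the quadratic if $|g_{\lambda_X}(p)|\le s_X$, and the constant $1$ otherwise. This gives a quadratic $\tilde W_X(c)$ that dominates $W_X(\cdot,s_X)$ and agrees with it at $c_X$. Hence $c_X$ minimises $\tilde W_X$ and $\tilde W_X'(c_X)=0$. Your argument, that each boundary prime would create a concave kink incompatible with a minimum, is equally valid and slightly more informative. It shows that no prime has $|g_{\lambda_X}(p)|=s_X$ at the minimiser, so (b) holds with either $<$ or $\le$ in the summation condition. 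The majorant trick is shorter and needs no analysis of one-sided derivatives.

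Your proof that $c_X\to 0$ is the paper's Fatou argument, written without extracting a convergent subsequence.
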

\begin{proof}
For large $X$ and $c \in \mb{R}$, $s > 0$ define
$$
W_X(c,s) := c^2 + \sum_{p \leq X} \frac{\min\{1,(g(p)/s - c \log p)^2\}}{p}, \ W_X(s) := \min_{c \in \mb{R}} W_X(c,s).
$$
Observe first of all that $W_X(s)$ is a non-decreasing function of $X$. Indeed, if $X' > X$, and we let $c_{X'}, c_X$ be chosen with $W_X(s) = W_X(c_X,s)$ and $W_{X'}(s) = W_{X'}(c_{X'},s)$, we have on restricting to the primes $p \leq X$ that
$$
W_{X'}(s) = W_{X'}(c_{X'},s) \geq c_{X'}^2 + \sum_{p \leq X} \frac{\min\{1,(g(p)/s - c_{X'}\log p)^2\}}{p} = W_X(c_{X'},s) \geq W_X(s).
$$
Next, we show that there is a solution $s_X$ to $W_X(s_X) = M$.
First of all, suppose that $s > 0$ is fixed.  Note that by considering the cases $s \geq 1$ and $s < 1$ separately, it is easily shown that
$$
\min\{s^2,g_{\lambda}(p)^2\} \geq \min\{1,s^2\} \min\{1,g_{\lambda}(p)^2\}.
$$
From \eqref{eq:Vtoinf},  as $X \ra \infty$ we obtain
\begin{align*}
W_X(s) &= \min_{c \in \mb{R}} \left(c^2 + \frac{1}{s^2} \sum_{p \leq X} \frac{\min\{s^2,g_{cs}(p)^2\}}{p}\right) \geq \frac{1}{s^2} \min_{\lambda \in \mb{R}} \left(\lambda^2 + \min\{1,s^2\} \sum_{p \leq X} \frac{\min\{1,g_{\lambda}(p)^2\}}{p}\right)  \\
&\geq \frac{\min\{1,s^2\}}{s^2} V_g(X) \ra \infty.
\end{align*}
In particular, for each $s$ there exists $X_0(s)$ such that if $X \geq X_0(s)$ then $W_X(s) > M$. \\
On the other hand, note that since $\min\{1,a\} \leq \min\{1,b\}$ whenever $0 \leq a \leq b$, we find that if $s_2 \geq s_1$ then for fixed $X$ and any $\lambda \in \mb{R}$,
\begin{align*}
\frac{\lambda^2}{s_2^2} + \sum_{p \leq X} \frac{\min\{1,g_{\lambda}(p)^2/s_2^2\}}{p} \leq \frac{\lambda^2}{s_1^2} + \sum_{p \leq X} \frac{\min\{1,g_{\lambda}(p)^2/s_1^2\}}{p}.
\end{align*}
Taking $\lambda = c_Xs_1$, we easily deduce that $W_X(s_2) \leq W_X(s_1)$. Furthermore, keeping $X$ fixed, as $s \ra \infty$ we have (on taking $\lambda = 0$ in an upper bound),
$$
W_X(s) \leq \frac{1}{s^2} \sum_{p \leq X} \frac{g(p)^2}{p} \ra 0.
$$
Therefore, whenever $s_0$ is fixed and $X \geq X_0(s_0)$, there is $s_1 > s_0$ such that
$$
W_X(s_1) < M < W_X(s_0).
$$
Clearly, $W_X(s)$ is a continuous function of $s$, so by the intermediate value theorem, we deduce that there is $s' \in (s_0,s_1)$ with $W_X(s') = M$, and we choose $s_X$ to be the largest value of $s'$ for which this holds. Choosing a minimiser $c_X$ for $W_X(s_X)$ gives the required parameter $c_X$. \\
We next show the properties (a) and (b). For (a), we observe that whenever $X' > X$ we have $s_{X'} \geq s_X$. Indeed, if not then on the one hand, since $W_X(s)$ is increasing in the $X$ aspect, we have
$$
W_{X'}(s_{X}) \geq W_X(s_X) = M.
$$
On the other hand, if $s_{X'} < s_X$ then as $W_X(s)$ is decreasing in the $s$-aspect,
$$
W_{X'}(s_X) \leq W_{X'}(s_{X'}) = M,
$$
which either gives the contradiction $M < M$ if $W_{X'}(s_X) < W_{X'}(s_{X'})$, or else if $W_{X'}(s_{X}) = M$ we obtain the contradiction that $s_{X'}$ is not the maximal value of $s$ with $W_{X'}(s) = M$. In any case, we see that $X \mapsto s_X$ is a non-decreasing function. If $s_X \ra S < \infty$ as $X \ra \infty$ then $s_X \leq S$. But then $M = W_X(s_X) \geq W_X(S)$ for all $X$, which contradicts the fact we showed above that $W_X(S) \ra \infty$ as $X \ra \infty$. We thus deduce that $s_X \ra \infty$. \\
Next, we show that $c_X \ra 0$. Observe first of all that $c_X^2 \leq W_X(c_X,s_X) = M$, so $|c_X| \leq \sqrt{M}$ for all $X$. Now, if a subsequence $(c_{X_j})_j$ converged to some $c \neq 0$ then by Fatou's lemma, for every large $Y$ we would have
$$
\sum_{p \leq Y} \frac{\min\{1,(c\log p)^2\}}{p} = \sum_{p \leq Y} \lim_{j \ra \infty} \frac{\min\{(f(p)/s_{X_j} - c_{X_j}\log p)^2,1\}}{p} \leq \lim_{j \ra \infty} W_{X_j}(c_{X_j},s_{X_j}) = M.
$$
But the LHS is at least
$$
\sum_{e^{1/c} < p \leq Y} \frac{1}{p} = \log\log Y - \log(1/c) \ra \infty, 
$$
as $Y \ra \infty$, which is a contradiction. This proves that every subsequence of $(c_X)_X$ converges to $0$, thus $c_X \ra 0$ as well, finishing the proof of (a). \\
To prove (b), taking $\lambda_X = c_Xs_X$ as in the statement, we define the auxiliary function
$$
\tilde{W}_X(c) := c^2 + \sum_{\ss{p \leq X \\ |g_{\lambda_X}(p)| > s_X}} \frac{1}{p} + \sum_{\ss{p \leq X \\ |g_{\lambda_X}(p)| \leq s_X}} \frac{(g(p)/s_X - c\log p)^2}{p}.
$$
Note that this is differentiable in $c$, and satisfies $\tilde{W}_X(c_X) = W_X(c_X,s_X)=M$. Furthermore, we observe that since $c_X$ is a minimiser for $c\mapsto W_X(c,s_X)$,
\begin{align*}
\tilde{W}_X(c) &\geq c^2 + \sum_{\ss{p \leq X \\ |g_{\lambda_X}(p)| > s_X}} \frac{\min\{1,(g(p)/s_X-c\log p)^2\}}{p} + \sum_{\ss{p \leq X \\ |g_{\lambda_X}(p)| \leq s_X}} \frac{\min\{1,(g(p)/s_X-c\log p)^2\}}{p} \\
&= W_X(c,s_X) \geq W_X(c_X,s_X),
\end{align*}
and thus $c = c_X$ is a minimum for $\tilde{W}_X$ as well. It follows on differentiating that
$$
0 = \tilde{W}_X'(c_X) = 2c_X - \frac{2}{s_X}\sum_{\ss{p \leq X \\ |g_{\lambda_X}(p)| \leq s_X}} \frac{g_{\lambda_X}(p) \log p}{p},
$$
and rearranging this, we obtain
$$
\sum_{\ss{p \leq X \\ |g_{\lambda_X}(p)| \leq s_X}} \frac{g_{\lambda_X}(p) \log p}{p} = c_X s_X = \lambda_X,
$$
as claimed. \\
\end{proof}
Next, for each $x\geq 1$ we define 
$$
\tilde{g}_x(n) := \sum_{p|n} \phi_K(g_{\lambda_x}(p)/s_x), \quad \mu(x) := \sum_{p \leq x} \frac{\tilde{g}_x(p)}{p}.
$$
\begin{lem}[Lem. 5.2 of \cite{ErdAI}]\label{lem:LowBdApp}
There are absolute constants $c_0,C_0 > 0$ such that
$$
\liminf_{X \ra \infty} \frac{1}{X}\sum_{n \leq X} |\tilde{g}_X(n) - \mu(X)|^2 \geq c_0M - C_0(KM)^2.
$$
\end{lem}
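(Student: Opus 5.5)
We will bound the variance from below using a Turán--Kubilius type lower bound for strongly additive functions, in the form of Ruzsa's variance estimate \cite{RuzsaVar}. That estimate says that for any additive $F$,
$$
\frac{1}{X}\sum_{n \leq X}\Big|F(n) - \sum_{p\leq X}\frac{F(p)}{p}\Big|^2 \gg \min_{\lambda \in \mb{R}}\Big(\lambda^2 + \sum_{p \leq X} \frac{|F(p) - \lambda\log p|^2}{p}\Big).
$$
The form used in \cite{ErdAI} is the two-sided equivalence $\asymp$, which we only need in the lower direction. We apply this with $F = \tilde{g}_X$, whose prime values are $\phi_K(g_{\lambda_X}(p)/s_X)$, so that the left side is exactly the quantity in the lemma, up to replacing $\sum_{p\le X}\tilde g_X(p)/p$ by $\mu(X)$, which is literally the same. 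It then remains to show that the minimum on the right is at least $c_0'M - C_0'(KM)^2$ with absolute constants.

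To do this, fix a minimiser $\lambda$ on the right-hand side and write $\theta(p):=\tilde g_X(p)-\lambda\log p$. We compare $\theta$ with the normalised quantity $g(p)/s_X - c_X\log p$ appearing in the definition of $M$. Split the primes into the non-truncated primes $p\notin\mc T_X$, where $\tilde g_X(p) = g_{\lambda_X}(p)/s_X$ exactly (since $K>1$), and the truncated primes $p\in\mc T_X$.

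On the non-truncated primes, $\tilde g_X(p) -\lambda\log p = g(p)/s_X - (c_X+\lambda)\log p$. On the truncated primes, $|\tilde g_X(p)| \ge 1$. The target is to show that
$$
\lambda^2 + \sum_{p\le X}\frac{\theta(p)^2}{p} \gg (c_X+\lambda)^2 + \sum_{p\leq X}\frac{\min\{1,(g(p)/s_X-(c_X+\lambda)\log p)^2\}}{p} - O(\ldots),
$$
after which the minimality defining $c_X$ yields a lower bound of $\ge W_X(s_X)=M$ up to the error terms. Several terms need care.

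First, the term $c_X^2$. By Lemma \ref{lem:4point1}(b), $c_X$ is controlled by a weighted sum of $g_{\lambda_X}(p)\log p/p$. Since $c_X\to0$, this term only helps or is negligible.

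Second, the truncated primes. For $p\in\mc T_X$ with $|\lambda\log p|\le 1/2$, we get $\theta(p)^2\ge 1/4\ge \tfrac14\min\{1,\cdot\}$. The primes with $|\lambda|\log p>1/2$ are handled by the $\lambda^2$ term: if $|\lambda|$ is not small, then $\lambda^2$ alone is already $\gg M$ unless $|\lambda|\ll\sqrt M$. In that case $\sum_{p\le X,\ |\lambda|\log p>1/2} 1/p$ over $p\in\mc T_X$ is bounded by $\sum_{p\in\mc T_X}1/p\le M$, and we still get a lower bound.

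The main obstacle is the cross term arising when we shift the centring $\lambda$ back to $c_X+\lambda$. In the non-truncated range the square is not truncated at $1$, so we must check that $\min\{1,\cdot\}$ is dominated by the square, which is trivial. The real loss comes from how large $\lambda\log p$ can be on primes up to $X$. If this is troublesome, we would instead first prove the Turán--Kubilius lower bound directly, by expanding $\frac1X\sum_n|\tilde g_X(n)-\mu(X)|^2$ as $\sum_p \tilde g_X(p)^2/p - \sum_{p,q}\ldots$ plus correlation terms. The negative correlation contribution is bounded via the large sieve dual (as in Elliott's treatment) by $\ll (\sum_p \tilde g_X(p)\log p/p)^2/\log X$-type quantities, and Lemma \ref{lem:4point1}(b) shows that this is $\ll\lambda_X^2/s_X^2 + (\text{truncated contribution})$. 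The truncated contribution is $\le K\sum_{p\in\mc T_X}\log p/p$, which, combined with $\sum_{p\in\mc T_X}1/p\le M$ and the bound $|\tilde g_X|\le K$, produces the error $O((KM)^2)$.

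The main term $\sum_p\tilde g_X(p)^2/p$ is at least $\sum_{p\le X}\min\{1,g_{\lambda_X}(p)^2/s_X^2\}/p = M-c_X^2$. Since $c_X\to0$, taking the $\liminf$ gives the bound $c_0M - C_0(KM)^2$.
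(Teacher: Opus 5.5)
Your first route has a genuine gap at the truncated primes with $|\lambda|\log p>1/2$. Knowing $|\lambda|<\sqrt{M}$ (otherwise $\lambda^2\ge M$) says nothing about $|\lambda|\log p$ for $p$ near $X$. Bounding the lost primes by $\sum_{p\in\mc{T}_X}1/p\le M$ costs as much as the main term you are protecting, which is only $\frac14 W_X(c_X+\lambda,s_X)\ge\frac14 M$. So, as written, you get no lower bound at all.

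What is missing is an a priori bound on the minimiser at scale $1/\log X$. Since $\lambda$ minimises a quadratic, it satisfies
\[
\lambda\Big(1+\sum_{p\le X}\frac{(\log p)^2}{p}\Big)=\sum_{p\le X}\frac{\tilde{g}_X(p)\log p}{p}, \qquad \sum_{p\le X}\frac{(\log p)^2}{p}=\Big(\frac12+o(1)\Big)(\log X)^2.
\]
Cauchy--Schwarz and $\sum_p\tilde{g}_X(p)^2/p\le K^2M$ then give $|\lambda|\log X\le(\sqrt{2}+o(1))K\sqrt{M}$. Alternatively, Lemma~\ref{lem:4point1}(b) together with $\sum_{p\in\mc{T}_X}\log p/p\le M\log X$ gives $|\lambda|\log X\le(2+o(1))KM+o(1)$. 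Hence if $K^2M\le 1/16$, every $p\le X$ has $|\lambda|\log p<1/2$ and nothing is lost. Your comparison, together with the minimality of $c_X$, then yields $\lambda^2+\sum_p\theta(p)^2/p\ge\frac14W_X(c_X+\lambda,s_X)-\frac12c_X^2\ge\frac14M-o(1)$. If instead $K^2M>1/16$, the claimed inequality is vacuous once $C_0\ge16c_0$.

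With this repair your argument is a valid alternative to the paper's. After the same Ruzsa bound, the paper solves the least-squares problem explicitly. The minimum is $\sum_p\tilde{g}_X(p)^2/p$ minus the projection $\theta_0\sum_p\tilde{g}_X(p)\log p/p$. That projection is $O((KM)^2)+o(1)$ by Lemma~\ref{lem:4point1}(b) and the sparsity of $\mc{T}_X$, and $\sum_p\tilde{g}_X(p)^2/p\ge M-c_X^2$. So the paper uses the first-order condition for $c_X$. Your repaired route uses the global minimality of $c_X$, needs only that $\lambda$ be small, and incurs no $(KM)^2$ loss in the non-vacuous range.

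Your fallback does not fill the gap as written. The dual large sieve produces Tur\'an--Kubilius \emph{upper} bounds. A lower bound for the variance in terms of the distance of $\tilde{g}_X$ from $\mb{R}\log$ is exactly the content of Ruzsa's theorem, which you had already invoked, so expanding the square directly would mean re-proving that theorem. Moreover, the correction term must be normalised by $\sum_{p\le X}(\log p)^2/p\asymp(\log X)^2$, not by $\log X$. With your normalisation, the truncated contribution $KM\log X$ would give an error of size $(KM)^2\log X$, which is unbounded. The ingredients you list at the end are the right ones: Lemma~\ref{lem:4point1}(b), $\sum_{p\in\mc{T}_X}\log p/p\le M\log X$, and $\sum_p\tilde{g}_X(p)^2/p\ge M-c_X^2$. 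They should be fed into the explicit minimisation of the one-variable quadratic coming from Ruzsa's bound, not into a direct expansion.
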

\begin{proof}
For an additive function $h$, define
$$
A_h(x) := \sum_{p^k \leq x} \frac{h(p^k)}{p^k} \left(1-\frac{1}{p}\right).
$$
By Ruzsa's variance lemma \cite{RuzsaVar},
\begin{align}\label{eq:RuzsaVar}
\frac{1}{x} \sum_{n \leq x} |\tilde{g}_x(n) - A_{\tilde{g}_x}(x)|^2 \asymp \min_{\theta \in \mb{R}} \left(\theta^2 + \sum_{p^k \leq x} \frac{(\tilde{g}_x(p^k)-\theta \log p^k)^2}{p^k}\right) \geq \min_{\theta \in \mb{R}} \sum_{p \leq x} \frac{(\tilde{g}_x(p)-\theta \log p)^2}{p}.
\end{align}
Since $\tilde{g}_x$ is strongly additive and $|\tilde{g}_x(p)| \leq K$ for all $p$,
\begin{equation}\label{eq:AmuComp}
A_{\tilde{g}_x}(x) = \sum_{p\leq x} \tilde{g}_x(p) \sum_{1 \leq k \leq \lfloor \tfrac{\log x}{\log p}\rfloor} \left(\frac{1}{p^k}-\frac{1}{p^{k+1}}\right) = \sum_{p \leq x} \frac{\tilde{g}_x(p)}{p} + O\left(\frac{1}{x} \sum_{p \leq x} |\tilde{g}_x(p)|\right) = \mu(x) + O\left(\frac{K}{\log x}\right).
\end{equation}
We thus obtain
\begin{align} \label{eq:redtheta}
\frac{1}{x}\sum_{n \leq x} |\tilde{g}_x(n) - \mu(x)|^2 \gg \min_{\theta \in \mb{R}} \sum_{p \leq x} \frac{(\tilde{g}_x(p)-\theta \log p)^2}{p} + o(1).
\end{align}
Now, differentiating the expression in the RHS in terms of $\theta$ shows that the minimum $\theta = \theta_0$ satisfies
$$
\theta_0 = \left(\sum_{p \leq x} \frac{\tilde{g}_x(p) \log p}{p}\right)/\left(\sum_{p \leq x} \frac{(\log p)^2}{p}\right) = \left(1+o(1)\right) \frac{2}{(\log x)^2} \sum_{p\leq x} \frac{\tilde{g}_x(p)\log p}{p},
$$
using partial summation and the prime number theorem in the last step. Since $K \geq 1$, by Lemma \ref{lem:4point1}(b) we have
\begin{align*}
\left|\sum_{p \leq x} \frac{\tilde{g}_x(p)\log p}{p}\right| &\leq \left|\sum_{\ss{p \leq x \\ |g_{\lambda_x}(p)| \leq s_x}} \frac{(g_{\lambda_x}(p)/s_x) \log p}{p}\right| + K\sum_{\ss{p \leq x \\ |g_{\lambda_x}(p)| > s_x}} \frac{\log p}{p} \\
&\leq \frac{|\lambda_x|}{s_x} + K(\log x) \sum_{\ss{p \leq x \\ |g_{\lambda_x}(p)| > s_x}} \frac{1}{p} \leq |c_x| + KM\log x.
\end{align*}
In particular, we obtain that
$$
\left|\theta_0\sum_{p \leq x} \frac{\tilde{g}_x(p)\log p}{p}\right| \leq (4+o(1))\left(\left(\frac{|c_x|}{\log x}\right)^2 + (KM)^2\right).
$$
Furthermore, again because $K \geq 1$, \eqref{eq:McXsXDef} yields
\begin{align*}
\sum_{p \leq x} \frac{\tilde{g}_x(p)^2}{p} \geq \sum_{p \leq x} \frac{\min\{(g_{\lambda_x}(p)/s_x)^2,1\}}{p} = M - c_x^2.
\end{align*}
Since by Lemma \ref{lem:4point1}(a), $c_x \ra 0$ as $x \ra \infty$, we obtain from \eqref{eq:redtheta} that there are constants $C,C_0 > 0$ such that
\begin{align*}
\frac{1}{x} \sum_{n \leq x} (\tilde{g}_x(n) - \mu(x))^2 \geq C \left(\sum_{p \leq x} \frac{\tilde{g}_x(p)^2}{p} - 2\theta_0 \sum_{p \leq x} \frac{\tilde{g}_x(p)\log p}{p}\right) \geq c_0 M - C_0 (KM)^2 + o(1).
\end{align*}
The claim follows on taking $x \ra \infty$ along any subsequence.
\end{proof}
In the sequel, we will appeal several times to the following fourth-moment estimate, due to Elliott \cite[Thm. 1, $\beta = 4$]{Elliott}.
\begin{lem}[Elliott] \label{lem:Elliott}
Let $h$ be an additive function. Then
$$
\frac{1}{x} \sum_{n \leq x} |h(n) - A_h(x)|^4 \ll \left(\sum_{p^k \leq x} \frac{|h(p^k)|^2}{p^k}\right)^2 + \sum_{p^k \leq x} \frac{|h(p^k)|^4}{p^k}.
$$
\end{lem}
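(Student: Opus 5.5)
The plan is to prove the bound by splitting $h$ into a \emph{small prime power} part and a \emph{large prime power} part, and treating each part separately. Set
$$
B^2 := \sum_{p^k \leq x} \frac{|h(p^k)|^2}{p^k}, \qquad y := x^{1/4}.
$$
Write $h = h_1 + h_2$, where $h_1(p^k) := h(p^k)1_{p^k \leq y}$ and $h_2(p^k) := h(p^k)1_{y < p^k \leq x}$. Since $A_h$ is linear in $h$, we have $A_h = A_{h_1} + A_{h_2}$. Using $(a+b)^4 \leq 8(a^4+b^4)$, it then suffices to prove the claimed bound separately for $h_1$ and for $h_2$, each with its own mean $A_{h_j}(x)$.

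\textbf{The large part $h_2$.} An integer $n \leq x$ has at most three prime power divisors $p^k \| n$ with $p^k > x^{1/4}$. Hence, by H\"older's inequality,
$$
|h_2(n)|^4 \ll \sum_{\substack{p^k \| n \\ p^k > y}} |h(p^k)|^4 .
$$
Averaging over $n \leq x$ gives
$$
\frac{1}{x}\sum_{n \leq x} |h_2(n)|^4 \ll \sum_{p^k \leq x} \frac{|h(p^k)|^4}{p^k}.
$$
For the mean, Cauchy--Schwarz yields
$$
|A_{h_2}(x)| \leq B\Bigl(\sum_{y < p^k \leq x} \frac{1}{p^k}\Bigr)^{1/2} \ll B,
$$
since the inner sum is $\log 4 + o(1)$. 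Therefore $|A_{h_2}(x)|^4 \ll B^4$, which is acceptable.

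\textbf{The small part $h_1$.} Expand $\frac1x\sum_{n\leq x}(h_1(n)-A_{h_1}(x))^4$ as a polynomial in the indicators $1_{p^k\|n}$. Every term involves at most four prime powers, each at most $y$. The count $\#\{n\leq x: p_i^{k_i}\| n\ \forall i\}$ therefore equals $x\prod_i (1-1/p_i)p_i^{-k_i}$ up to an error $O(2^{4})$, because the relevant moduli are at most $y^4 p \leq x\cdot y$. One has to handle the $\|$ condition by inclusion--exclusion over $p^{k+1}$, which keeps the moduli of size $O(x^{1+o(1)})$; if necessary I will shrink $y$ to $x^{1/5}$ to keep all moduli below $x$.

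The main terms of this expansion are exactly the fourth central moment of $S := \sum_{p^k\leq y} X_{p^k}$. Here the $X_{p^k}$ model independent primes: for each $p$ the variable $\sum_k h(p^k)1_{p^k\|n}$ has the law $P(p^k\|n) = (1-1/p)p^{-k}$, independently across $p$, and its mean is exactly the $p$-contribution to $A_{h_1}$. Rosenthal's inequality for sums of independent centred variables then gives
$$
\mathbb{E}|S-\mathbb{E}S|^4 \ll \Bigl(\sum_p \mathrm{Var}\Bigr)^2 + \sum_p \mathbb{E}|Y_p - \mathbb{E}Y_p|^4 .
$$
Here $Y_p$ denotes the $p$-component of $S$. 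We have $\mathrm{Var}(Y_p) \leq \sum_k |h(p^k)|^2/p^k$. For the fourth moment, $\mathbb{E}|Y_p|^4 = \sum_k |h(p^k)|^4 p^{-k}(1-1/p)$, and $|\mathbb{E}Y_p|^4 \leq (\sum_k|h(p^k)|^2p^{-k})^2$ by Cauchy--Schwarz with $\sum_k p^{-k}\leq 1$. Summing over $p$ and using $\sum a_p^2 \leq (\sum a_p)^2$, this is bounded by $B^4 + \sum |h(p^k)|^4/p^k$, as required.

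The error terms are bounded by $\frac{1}{x}\bigl(\sum_{p^k\leq y}|h(p^k)|\bigr)^4$, up to lower-order mixed terms involving $A_{h_1}$. By Cauchy--Schwarz,
$$
\sum_{p^k\leq y}|h(p^k)| \leq B\Bigl(\sum_{p^k\leq y}p^k\Bigr)^{1/2} \leq B y,
$$
so the error is at most $B^4 y^4/x = B^4$. The mixed terms are handled in the same way, using $|A_{h_1}| \leq B(\log\log x)^{1/2}$ together with the stronger saving available when $y = x^{1/5}$.

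\textbf{Main obstacle.} I expect the main difficulty to lie in the bookkeeping of this expansion: making the exact conditions $p^k\| n$, for repeated and for distinct primes, match the independent model so that the main terms cancel to a genuine central fourth moment. The choice $y = x^{1/5}$ exists precisely so that every sieve error term is absorbed into $B^4$.
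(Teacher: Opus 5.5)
Your argument is correct, but it takes a different route from the paper: the paper gives no proof and simply quotes Elliott's Theorem 1 with $\beta=4$, a high-power analogue of the Tur\'an--Kubilius inequality valid for general exponents $\beta\ge 2$. You instead prove the case $\beta=4$ from scratch, as follows.
\begin{enumerate}[(i)]
\item You truncate at $y=x^{1/4}$.
\item The large prime powers are handled pointwise. At most three of them divide any $n\le x$, so H\"older gives $|h_2(n)|^4\le 27\sum_{p^k\| n,\ p^k>y}|h(p^k)|^4$, and their mean is controlled by Cauchy--Schwarz.
\item The small part is compared with the independent model $\mathbb{P}(\nu_p=k)=(1-1/p)p^{-k}$. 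The exact bound $\mathbb{E}\bigl|\sum_p\xi_p\bigr|^4\le\sum_p\mathbb{E}|\xi_p|^4+3\bigl(\sum_p\mathbb{E}|\xi_p|^2\bigr)^2$ for independent centred $\xi_p$ then finishes this part.
\item The sieve error $16x^{-1}\bigl(\sum_{p^k\le y}|h(p^k)|+\sum_p|\mathbb{E}Y_p|\bigr)^4\ll B^4\bigl(y^4+(\log\log x)^2\bigr)/x$ is absorbed precisely because $y^4=x$.
\end{enumerate}
The citation is shorter and covers arbitrary exponents $\beta\ge2$. Your argument is elementary and self-contained, and it extends essentially verbatim to any even moment $2m$ by taking $y=x^{1/(2m)}$, with constants depending on $m$.

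One correction to your bookkeeping worry. The count $\#\{n\le x: d\mid n\}=\lfloor x/d\rfloor$ differs from $x/d$ by less than $1$ for every $d\ge 1$, including $d>x$. So inclusion--exclusion over the conditions $p_i^{k_i}\| n$ costs at most $2^4$ however large the moduli are. The moduli can in fact reach $y^8$, so shrinking to $y=x^{1/5}$ would not keep them below $x$ anyway.

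The size of $y$ enters only in two places: the bound $\sum_{p^k\le y}|h(p^k)|\le By$ in the error term, and the count of large prime powers per $n$. Likewise, the main terms match the model exactly, because for fixed $p$ the events $p^k\| n$ for different $k$ are disjoint both in $\mathbb{N}$ and in the model. The step you flagged as the main obstacle is therefore routine, and $y=x^{1/4}$ suffices.
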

As above, for an arithmetic function $F: \mb{N} \ra \mb{C}$ and integers $n > H\geq 1$, define 
$$
(\mc{A}_H F)(n) := \frac{1}{H} \sum_{1 \leq j \leq H} F(n-j).
$$
\begin{lem}[cf. Lem. 2.1 of \cite{ErdAI}] \label{lem:2point1}
Let $H = H(x) \ra \infty$ be an integer parameter with $H \leq \sqrt{x}$. Then as $x \ra \infty$,
$$
\frac{1}{x}\sum_{H < n \leq x} \left(\mc{A}_H\tilde{g}_x(n) - \mu(x)\right)^2 = o(1).
$$
\end{lem}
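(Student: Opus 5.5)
The plan is to compare the short average with the mean via the Matom\"{a}ki--Radziwi\l\l{}-type theorem for additive functions, \cite[Thm. 1.1]{Mangerel}. That theorem applies because $\tilde{g}_x$ is strongly additive with $|\tilde{g}_x(p)| \leq K$ and, by \eqref{eq:Btildeg}, $B_{\tilde{g}_x}(x)^2 \leq K^2M = O(1)$ uniformly in $x$.

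We begin with the decomposition. Write $D(n) := \tilde{g}_x(n) - \mu(x)$. Then
$$\mc{A}_H\tilde{g}_x(n) - \mu(x) = \frac{1}{H}\sum_{1\le j\le H} D(n-j),$$
so after reindexing $m = n-H$ the quantity to bound is the mean square over $m \leq x$ of the short-interval averages of $D$ over $(m, m+H]$. The theorem of \cite{Mangerel} gives, roughly, that for $h(x) \to \infty$,
$$\frac{1}{x}\sum_{m \leq x}\Big|\frac{1}{H}\sum_{m<n\le m+H} \tilde{g}_x(n) - \frac{1}{x}\sum_{n \leq x}\tilde{g}_x(n)\Big|^2 \ll B_{\tilde{g}_x}(x)^2\left(\frac{\log\log H}{\log H} + (\log x)^{-c}\right) + \text{(large-prime contribution)}.$$
Its precise form bounds the error by $B^2$ times a decaying factor plus a term involving primes $p > H$ weighted like $\sum_{p>H}\tilde{g}_x(p)^2/p$ against $\sum_{H<p\le x}$. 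Since $B^2 \ll K^2M$ is bounded, every such term is $o(1)$ once $H \to \infty$. If the stated form has an extra term like $\sum_{p > x^{\epsilon}} |\tilde{g}_x(p)|^2/p$, it is still bounded by $K^2\sum_{x^\epsilon<p\le x}1/p \ll K^2\log(1/\epsilon)\cdot$ (a density factor). That would only be $O(K^2)$, not small, so we must instead check that the theorem's error is expressed relative to $B^2$. I expect that to be the case.

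Next we replace the long average by $\mu(x)$. We have $\frac1x\sum_{n\le x}\tilde{g}_x(n) = A_{\tilde{g}_x}(x) + O(K/\log x) = \mu(x) + O(K/\log x)$ by \eqref{eq:AmuComp}. The edge effects for $m$ near $x$ (averages extending to $x+H$) are handled by running the estimate at scale $2x$ or $x+H$. This changes $\mu$ by $\ll K\sum_{x<p\le 2x}1/p = o(1)$, and the scale mismatch, $\tilde{g}_{x}$ being evaluated at $2x$, is harmless since its prime values are fixed. The terms with $m$ small are controlled by Cauchy--Schwarz together with the variance bound of Lemma \ref{lem:Elliott} (or Tur\'an--Kubilius): $\frac1x\sum_{n\le x}D(n)^2 \ll K^2M$. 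This shows that any set of $o(x)$ values of $m$ contributes $o(1)$ by H\"older with the fourth moment from Lemma \ref{lem:Elliott}.

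The main obstacle is ensuring that the short-interval theorem is applied with constants uniform in $x$. The function $\tilde{g}_x$ changes with $x$, so the result must be quantitative in $B$ and $\max_p|\tilde{g}_x(p)|$ rather than qualitative for a fixed function. If the available statement has a large-prime term not divided by $B^2$, I would first split $\tilde{g}_x = \tilde{g}_x^{\le} + \tilde{g}_x^{>}$ at primes $y = x^{\eta}$. I would apply the theorem to the small part, and bound the large part directly: its short averages have mean square $\ll K^2\sum_{y<p\le x}1/(pH)+K^2(\sum_{y<p\le x}1/p)^2/H$ after expanding the square. This is $o(1)$ as $H\to\infty$ for fixed $\eta$, and we then let $\eta\to0$ slowly.
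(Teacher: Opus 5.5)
\textbf{There is a genuine gap: your argument rests on a mean-square ($L^2$) form of \cite[Thm.~1.1]{Mangerel} that you do not verify} (``I expect that to be the case''). The result as the paper cites and uses it is an $L^1$ statement. For $x/\log x < y \le x$ it gives
$$\frac{2}{y}\sum_{y/2<n\le y}\Bigl|\mc{A}_H\tilde g_x(n) - \frac{2}{y}\sum_{y/2<m\le y}\tilde g_x(m)\Bigr| \ll \Bigl(\Bigl(\frac{\log\log H}{\log H}\Bigr)^{1/2} + (\log y)^{-1/800}\Bigr) B_{\tilde g_x}(x).$$
The bound is proportional to $B$, not $B^2$. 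You have not identified an $L^2$ version with error relative to $B^2$ that applies uniformly to the family $\tilde g_x$.

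The missing idea is how to pass from $L^1$ to $L^2$ without pointwise boundedness. The paper sets $S_m := \frac1x\sum_{H<n\le x}|\mc{A}_H\tilde g_x(n)-\mu(x)|^m$ and uses H\"older, $S_2\le S_1^{2/3}S_4^{1/3}$.
\begin{itemize}
\item The fourth moment satisfies $S_4\ll K^4M+o(1)$, by Jensen applied to the average $\mc{A}_H$ together with Elliott's bound (Lemma \ref{lem:Elliott}).
\item On the dyadic blocks $(y/2,y]$ with $y > x/\log x$, the $L^1$ theorem applies once you replace the block mean by $\mu(y)+O(K/\log y)$ and use $|\mu(x)-\mu(y)|\ll K\log\log x/\log x$.
\item The range $n\le x/\log x$ contributes $O(KM^{1/2}(\log x)^{-1/2})$, by Cauchy--Schwarz and Tur\'an--Kubilius.
\end{itemize}
Together these give $S_1=o(1)$ and hence $S_2=o(1)$. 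You do bring in the fourth moment, but only to discard $o(x)$ exceptional $m$, not as the interpolation that makes the $L^1$ theorem sufficient.

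\textbf{Your fallback does not close the gap either.} First, it still needs an $L^2$ theorem for the small-prime part. Second, the bound you claim for the large-prime part $F:=\tilde g_x^{>}$ does not follow from expanding the square. The off-diagonal terms are weighted averages over $1\le|d|<H$ of $\mathrm{Cov}(F(m),F(m+d))$. These involve counts $\#\{m\le x:\ p\mid m,\ q\mid m+d\}$ for primes $p,q>x^{\eta}$ with $pq>x$. Such a count is $0$ or $1$, against an expected value $x/(pq)$, so there is no termwise asymptotic. Nothing in the computation produces the factor $1/H$ in your second term. Showing that these averaged correlations are small for arbitrary bounded weights $\tilde g_x(p)$ is exactly the input a Matom\"aki--Radziwi\l\l{}-type theorem supplies, so this part cannot be done elementarily.
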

\begin{proof}
We may assume that $x$ is sufficiently large so that $H(x) \geq 10$ and $H(x) \leq \sqrt{x} \leq x/(100\log x)$, which will be relevant shortly. \\
For $m \in \mb{N}$ let us write
$$
S_m := \frac{1}{x} \sum_{H < n \leq x} \left|\mc{A}_H\tilde{g}_x(n) - \mu(x)\right|^m.
$$
The LHS in the statement of the lemma is of course $S_2$. By H\"{o}lder's inequality, we have
\begin{equation}\label{eq:S2S1S4}
S_2 \leq S_1^{2/3} S_4^{1/3}.
\end{equation}
We first bound $S_4$. Applying H\"{o}lder's inequality and \eqref{eq:AmuComp},
\begin{align*}
S_4 = \frac{1}{x} \sum_{H < n \leq x } |\mc{A}_H(\tilde{g}_x(n)-\mu(x))|^4 &\leq \frac{1}{H} \sum_{1 \leq j \leq H} \sum_{H < n \leq x} |\tilde{g}_x(n-j) - \mu(x)|^4 \\
&\ll \frac{1}{x} \sum_{n \leq x} |\tilde{g}_x(n) - A_{\tilde{g}_x}(x)|^4 + O_K\left(\frac{1}{(\log x)^4}\right). 
\end{align*}
Since $\tilde{g}_x$ is strongly additive and $|\tilde{g}_x(p)| \leq K$, Lemma \ref{lem:Elliott} yields
\begin{align*}
\frac{1}{x} \sum_{n \leq x} |\tilde{g}_x(n) - A_{\tilde{g}_x}(x)|^4 \ll \left(\sum_{p^k \leq x} \frac{\tilde{g}_x(p^k)^2}{p^k}\right)^2 + \sum_{p^k \leq x} \frac{\tilde{g}_x(p^k)^4}{p^k} \ll \left(\sum_{p \leq x} \frac{\tilde{g}_x(p)^2}{p}\right)^2 + K^2\sum_{p \leq x} \frac{\tilde{g}_x(p)^2}{p}.
\end{align*}
Since $K \geq 1$, we clearly have
\begin{align} \label{eq:vargxBd}
\sum_{p \leq x} \frac{\tilde{g}_x(p)^2}{p} = \sum_{p \leq x} \frac{\min\{K^2,(\tilde{g}_{\lambda_x}(p)/s_x)^2\}}{p} \leq K^2M.
\end{align}
As $M < 1$, it follows that
\begin{align*}
S_4 \ll (K^2 M)^2 + K^4M + o(1) \ll K^4M + o(1).
\end{align*}
To estimate $S_1$, we recall first of all our assumption that $10 \leq H \leq x/(100 \log x)$. We decompose the range dyadically and replace $\mu(x)$ by $\tilde{A}_{g_x}(x)$ in the range $(H,x/\log x]$, getting
\begin{align*}
S_1 &= \frac{1}{x}\sum_{H<n \leq x/\log x} |\mc{A}_H\tilde{g}_x(n) - A_{\tilde{g}_x}(x)| + \sum_{1 \leq 2^j \leq \log x} \frac{1}{2^{j+1}} \cdot \frac{2^{j+1}}{x} \sum_{x/2^{j+1} < n \leq x/2^j} |\mc{A}_H\tilde{g}_x(n) - \mu(x)|  + o(1)\\
&=: S_{1,1} + S_{1,2}.
\end{align*}
By the triangle inequality, the Tur\'{a}n-Kubilius inequality (or the first estimate in \eqref{eq:RuzsaVar}, obtaining an upper bound by setting $\theta = 0$) and \eqref{eq:vargxBd}, the range $(H,x/\log x]$ contributes 
\begin{align*}
S_{1,1} &\leq \frac{1}{H} \sum_{1 \leq j \leq H} \frac{1}{x} \sum_{H < n \leq x/\log x} |\tilde{g}_x(n-j) - A_{\tilde{g}_x}(x)| \leq \frac{1}{x} \sum_{n \leq x/\log x} |\tilde{g}_x(n) - A_{\tilde{g}_x}(x)| \\
&\ll \frac{1}{\sqrt{\log x}} \left(\frac{1}{x}\sum_{n \leq x} (\tilde{g}_x(n) - A_{\tilde{g}_x}(x))^2\right)^{1/2} \ll \frac{1}{\sqrt{\log x}} \left(\sum_{p \leq x} \frac{\tilde{g}_x(p)^2}{p}\right)^{1/2} \ll \frac{KM^{1/2}}{\sqrt{\log x}},
\end{align*}
so that $S_{1,1} = o(1)$. Now, for each $j$ with $1 \leq 2^j \leq \log x$ we have by Mertens' theorem,
$$
|\mu(x) - \mu(x/2^j)| \leq \sum_{x/\log x < p \leq x} \frac{|\tilde{g}_x(p)|}{p} \ll K \frac{\log\log x}{\log x}.
$$
Furthermore, for $x/\log x < y \leq x$ we observe the long sum bound
\begin{align*}
\frac{2}{y}\sum_{y/2 < n \leq y} \tilde{g}_x(n) = \frac{2}{y} \sum_{p\leq y} \tilde{g}_x(p) \sum_{\ss{y/2 < n \leq y \\ p|n}} 1 = \mu(y) + O\left(\frac{1}{y}\sum_{p \leq y} |\tilde{g}_x(p)|\right) = \mu(y) + O\left(\frac{K}{\log y}\right).
\end{align*}
Since $10 \leq H \leq y/100$ for all $y$ in this range (and $H$ is an integer by assumption), we may appeal to a result of the author \cite[Thm. 1.1]{Mangerel} that, together with\eqref{eq:vargxBd}, yields
\begin{align*}
\frac{2}{y} \sum_{y/2 < n \leq y} |\mc{A}_H\tilde{g}_x(n) - \mu(x)| &\leq \frac{2}{y} \sum_{y/2 < n \leq y} \left|\mc{A}_H\tilde{g}_x(n) - \frac{2}{y} \sum_{y/2< n \leq y} \tilde{g}_x(n)\right| + O\left(\frac{K\log\log x}{\log x}\right) \\
&\ll \left(\left(\frac{\log\log H}{\log H}\right)^{1/2} + (\log y)^{-1/800}\right) \left(\sum_{p \leq x} \frac{\tilde{g}_x(p)^2}{p}\right)^{1/2} \\
&\ll \left(\left(\frac{\log\log H}{\log H}\right)^{1/2} + (\log x)^{-1/800}\right) KM^{1/2}.
\end{align*}
Applying this with $y=x/2^j$, then summing over $1 \leq 2^j \leq \log x$, we obtain that $S_{1,2} = o(1)$. Therefore, $S_1 = o(1)$ as well.  \\
Combining this with the previous bound for $S_4$ and inserting both into \eqref{eq:S2S1S4}, the claim follows.
\end{proof}
Next, we recall the notation
$$
h_x(n) := \phi_K(g_{\lambda_x}(n)/s_x - a_x), \quad a_x := \frac{1}{s_x} \sum_{\ss{p \leq x \\ |g_{\lambda_x}(p)| \leq s_x}} \frac{g_{\lambda_x}(p)}{p}.
$$
Set also $G_x(n) := \tilde{g}_x(n) - a_x$.
\begin{lem}[cf. Lem. 5.1 of \cite{ErdAI}]\label{lem:5point1}
There is a constant $C' > 0$ such that as $x \ra \infty$,
\begin{align*}
\frac{1}{x} \sum_{n \leq x} |h_x(n) - G_x(n)|^2 \leq C' \left(\frac{M}{K^2} + M^2\log(1/M) + (KM)^2\right) + o(1).
\end{align*}
\end{lem}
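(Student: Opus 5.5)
The idea is to compare the two functions pointwise according to how many ``large'' primes divide $n$. Write $t_p := g_{\lambda_x}(p)/s_x$. Let $\mc{T}_x$ be the set of $p \leq x$ with $|t_p| > 1$. By \eqref{eq:McXsXDef},
$$
\sum_{p \in \mc{T}_x} \frac1p \leq M \quad\text{and}\quad \sum_{p \leq x,\, p \notin \mc{T}_x} \frac{t_p^2}{p} \leq M.
$$
Let $u(n) := \sum_{p | n,\, p \notin \mc{T}_x} t_p$. This is a strongly additive function with $|u(p)| \leq 1$, and since $K \geq 1$ it agrees with $\tilde g_x$ on primes outside $\mc{T}_x$. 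Its mean $A_u(x)$ equals $a_x + O(1/\log x)$, as in \eqref{eq:AmuComp}. Put $Y(n) := u(n) - a_x$. Elliott's Lemma \ref{lem:Elliott} and Tur\'an--Kubilius then give
$$
\frac1x \sum_{n\leq x} |Y(n)|^2 \ll M + o(1), \qquad \frac1x\sum_{n \leq x} |Y(n)|^4 \ll M + M^2 + o(1).
$$

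First I dispose of prime powers. Fix a slowly growing $Y_0 = Y_0(x)$. Consider the $n$ with $p^k \| n$ for some $k \geq 2$ and $p^k > Y_0$. Their density is $\ll Y_0^{-1/2} = o(1)$. On them I bound $|h_x - G_x|^2 \ll K^2 + |G_x|^2$ and use Cauchy--Schwarz together with the fourth-moment bounds, so their contribution is $o(1)$. For the remaining $n$, each prime power $p^k \| n$ with $k \geq 2$ has $p^k \leq Y_0$. The discrepancy $g_{\lambda_x}(p^k)/s_x$ is then $O(\max_{p^k \leq Y_0}|g_{\lambda_x}(p^k)|/s_x)$, which is $o(1)$ if $Y_0$ grows slowly enough, because $s_x \ra \infty$ and $c_x \ra 0$ by Lemma \ref{lem:4point1}(a). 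By the Lipschitz property in Lemma \ref{lem:clipProp}(i), replacing $g_{\lambda_x}(n)/s_x$ by the strongly additive value $\sum_{p|n} t_p$ then costs only $o(1)$ in mean square. So I may assume $g_{\lambda_x}(n)/s_x = u(n) + \sum_{p | n,\, p\in \mc{T}_x} t_p$.

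Now I split according to $\omega_{\mc{T}}(n) := \#\{p \in \mc{T}_x : p | n\}$.

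\emph{Case $\omega_{\mc{T}}(n) = 0$.} Here $h_x(n) = \phi_K(Y(n))$ and $G_x(n) = Y(n)$. The difference is nonzero only when $|Y| > K$, so $|h_x - G_x|^2 \leq |Y|^2 1_{|Y|>K} \leq |Y|^4/K^2$. The fourth-moment bound then gives a contribution $\ll (M+M^2)/K^2$.

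\emph{Case $\omega_{\mc{T}}(n) = 1$, with $n = pm$, $p \in \mc{T}_x$.} Here $G_x(n) = Y(n) + \phi_K(t_p)$ and $h_x(n) = \phi_K(Y(n) + t_p)$. By Lemma \ref{lem:clipProp}(i), $|\phi_K(Y + t_p) - \phi_K(t_p)| \leq |Y|$, so $|h_x - G_x| \leq 2|Y(n)|$. Also $u(pm) = u(m)$. The contribution is therefore
$$
\ll \sum_{p \in \mc{T}_x} \frac{1}{x}\sum_{m \leq x/p} |u(m) - a_x|^2 .
$$
For each $p$, I apply Tur\'an--Kubilius at scale $x/p$. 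This gives $\ll \frac{1}{p}\bigl(M + |A_u(x/p) - A_u(x)|^2\bigr)$. By Cauchy--Schwarz and Mertens,
$$
|A_u(x/p)-A_u(x)|^2 \leq M \cdot \log\frac{\log x}{\log(x/p)} + o(1).
$$
It remains to bound $\sum_{p\in\mc{T}_x} \frac1p \log\frac{\log x}{\log (x/p)}$ subject to $\sum_{p\in \mc{T}_x} 1/p \leq M$. The worst case is when $\mc{T}_x$ is packed just below $x$, i.e.\ primes $p \in (x^{1-\eta}, x]$ with $\log(1/(1-\eta)) \approx M$. An extremal/rearrangement computation then gives $\ll M \log(1/M)$. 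Altogether this case contributes $\ll M^2 + M^2\log(1/M)$.

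\emph{Case $\omega_{\mc{T}}(n) \geq 2$.} I bound $|h_x - G_x|^2 \ll K^2 + |Y|^2 + K^2\omega_{\mc{T}}(n)^2$. Summing over pairs of primes in $\mc{T}_x$ and using the same decorrelation as in the previous case, the density is $\ll (\sum_{\mc{T}_x} 1/p)^2 \leq M^2$. The contribution is thus $\ll (KM)^2 + M^3\log(1/M)$.

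Collecting the three cases gives the lemma. I expect the main obstacle to be the case $\omega_{\mc{T}}(n) = 1$. There one must control the mean square of $u(m) - a_x$ on the shortened ranges $m \leq x/p$ uniformly over large $p \in \mc{T}_x$. The constraint $\sum_{\mc{T}_x} 1/p \leq M$ must be exploited precisely to extract $M^2\log(1/M)$ rather than a bound that is only $O(M)$.
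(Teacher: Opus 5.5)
Your proposal is correct and is essentially the paper's argument: the same reduction from $h_x$ to its strongly additive version $h_x^\ast$ (you use a slowly growing prime-power cutoff where the paper fixes $T=T(M)$), Elliott's fourth moment divided by $K^2$ when $\omega_{\mc T}(n)=0$, and for $\omega_{\mc T}(n)\ge 1$ the Lipschitz bound $|h_x^\ast-G_x|\le 2|G_x(m)|$ (plus $2K\omega_{\mc T}(n)$ and the pair count $\ll (KM)^2$ when $\omega_{\mc T}(n)\ge 2$), followed by Tur\'an--Kubilius at scale $x/t$ and a Cauchy--Schwarz/Mertens bound on the shift of the mean. The only real difference is that you control $\sum_{p\in\mc T_x}\frac1p\log\frac{\log x}{\log(x/p)}$ by a bathtub/rearrangement argument, whereas the paper splits the $\mc T_x$-part $t$ of $n$ at $x^{1-M}$ and the primes at $x^M$; your rearrangement must be iterated for the pairs $pq$, by splitting $\log\frac{\log x}{\log(x/pq)}$ as $\log\frac{\log x}{\log(x/p)}+\log\frac{\log(x/p)}{\log(x/pq)}$, and your ``$+o(1)$'' there is not uniform for $p$ near $x$, although those terms total $o(1)$ after weighting by $1/p$.
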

\begin{proof}
In the sequel, we define the auxiliary function $h_x^\ast(n)$ as follows:
$$
h_x^\ast(n) := \phi_K\left(\sum_{p|n} g_{\lambda_x}(p)/s_x - a_x\right),
$$
which differs from $h_x(n)$ in that it does not depend on the prime power factors $p^k||n$ with $k \geq 2$ (and, in this respect, is more comparable to $G_x(n)$). Now, as before, we set
$$
\mc{T}_x := \{p \leq x : \ |g_{\lambda_x}(p)| > s_x\}.
$$
Let us write $(n,\mc{T}_x) = 1$ to denote that $p |n \Rightarrow p \notin \mc{T}_x$. 
Note that when $M$ is small, $\mc{T}_x$ is a relatively sparse set, and thus $(n,\mc{T}_x) > 1$ is an unlikely event.  Write also $\omega_{\mc{T}_x}(n)$ to denote the number of prime factors of $n$ coming from $\mc{T}_x$ (so that $\omega_{\mc{T}_x}(n) = 0$ is equivalent to $(n,\mc{T}_x) = 1$). \\
With the above additional notation, we have
\begin{align*}
\frac{1}{x}\sum_{n \leq x} |h_x(n) - G_x(n)|^2 &\ll \frac{1}{x} \sum_{\ss{n \leq x \\ (n,\mc{T}_x) = 1}} |h_x^\ast(n) - G_x(n)|^2 + \frac{1}{x}\sum_{\ss{n \leq x \\ \omega_{\mc{T}_x}(n) \geq 1}} |h_x^\ast(n) - G_x(n)|^2 + \frac{1}{x}\sum_{\ss{n \leq x}} |h_x(n)-h_x^\ast(n)|^2\\
&=: V_1 + V_2 + V_3
\end{align*}
We start with $V_1$. Since $K \geq 1$, whenever $(n,\mc{T}_x) = 1$ we have
$$
G_x(n) = \tilde{g}_x(n) - a_x = \sum_{p|n} g_{\lambda_x}(p)/s_x - a_x = h_x^\ast(n),
$$
unless $|g_{\lambda_x}(n)/s_x - a_x| > K$. Moreover, when restricted to $n$ that satisfy $(n,\mc{T}_x) = 1$, $a_x$ is precisely the mean value of $\tilde{g}_x(n)$. It follows that
\begin{align*}
V_1 &\leq 2\sum_{\ss{n \leq x \\ (n,\mc{T}_x) = 1 \\ |g_{\lambda_x}(n)/s_x -a_x| > K}} \left(K^2 + |g_{\lambda_x}(n)/s_x - a_x|^2\right) \ll \frac{1}{K^2} \sum_{\ss{n \leq x \\ (n,\mc{T}_x) = 1}} |g_{\lambda_x}(n)/s_x - a_x|^4.
\end{align*}
In view of \eqref{eq:AmuComp}, we note that 
$$
a_x = \mu(x) - \frac{1}{s_x}\sum_{\ss{p \leq x \\ 1 < |g_{\lambda_x}(p)|/s_x \leq K}} \frac{g_{\lambda_x}(p)}{p} = \mu(x) + O(KM) = A_{\tilde{g}_x}(x) + O\left(K\left(M + \frac{1}{\log x}\right)\right).
$$
Since $|\tilde{g}_x(p)| \leq 1$ whenever $p \notin\mc{T}_x$, together with Lemma \ref{lem:Elliott} this gives
\begin{align*}
&\frac{1}{K^2} \sum_{\ss{n \leq x \\ (n,\mc{T}_x) = 1}} |g_{\lambda_x}(n)/s_x-a_x|^4 \ll \frac{1}{K^2} \sum_{\ss{n \leq x \\ (n,\mc{T}_x) = 1}} |\tilde{g}_x(n) - A_{\tilde{g}_x}(x)|^4 + K^2M^4 + o(1) \\
&\ll \frac{1}{K^2} \left(\left(\sum_{p \leq x} \frac{\min\{1,(g_{\lambda_x}(p)/s_x)^2\}}{p}\right)^2 + \sum_{p \leq x} \frac{\min\{1,(g_{\lambda_x}(p)/s_x)^2\}^2}{p}\right) + K^2M^4 + o(1) \\
&\ll \frac{M^2 + M}{K^2} + K^2M^4 + o(1),
\end{align*}
which, since $M < 1$, is an acceptable bound. \\
Next, consider $V_3$, which only relates to the contribution of prime power factors to $h_x$. Fix some parameter $T = T(M)$ to be chosen shortly. We split the sum over $n$ into sets $\mc{G}_T$ and $\mc{B}_T$, where
$$
\mc{G}_T := \{n \leq x : \ p^k||n, k \geq 2 \Rightarrow p^k \leq T\}, \quad \mc{B}_T := [1,x] \bk \mc{G}_T.
$$
Since $|h_x(n) - h_x^\ast(n)| \leq 2K$ for all $n$, we see that
\begin{align*}
\frac{1}{x}\sum_{\ss{n \in \mc{B}_T}} |h_x(n)-h_x^\ast(n)|^2 \leq \frac{4K^2}{x} \sum_{\ss{p^k > T \\ k \geq 2}} \sum_{\ss{n \leq x \\ p^k|n}} 1 \ll K^2 \sum_{\ss{p^k > T \\ k \geq 2}} \frac{1}{p^k} \leq K^2 M^2,
\end{align*}
provided that $T$ is sufficiently large relative to $M$. We fix a corresponding $T$ for the remainder of the argument. \\
Next, we note that the set of prime powers $p^k \leq T$ with $k \geq 2$ depends at most on $M$, but not on $x$. Therefore, there is a constant $C = C_{f,M} > 0$ such that
$$
\frac{1}{s_x}\sum_{\ss{p^k \leq T \\ k \geq 2}} \frac{|g_{\lambda_x}(p^k)|}{p^k} \leq \frac{1}{s_x}\sum_{\ss{p^k \leq T \\ k \geq 2}} \frac{|g(p^k)|}{p^k} + \frac{\lambda_x}{s_x} \sum_{\ss{p^k \leq T \\ k \geq 2}} \frac{\log p^k}{p^k} \leq C\left(\frac{1}{s_x} + c_x\right).
$$
Therefore, increasing $C$ as needed, we have
$$
\frac{1}{x}\sum_{\ss{n \in \mc{G}_T}} |h_x(n) - h_x^\ast(n)|^2 \leq \frac{2K}{x} \sum_{n \leq x}\left(\frac{1}{s_x}\sum_{\ss{p^k || n \\ p^k \leq T, \ k \geq 2}} |g_{\lambda_x}(p^k)|\right) \leq \frac{2K}{s_x} \left(\sum_{\ss{p^k \leq T \\ k \geq 2}} \frac{|g_{\lambda_x}(p^k)|}{p^k} + O_{T,f}(1)\right) \leq  C\left(\frac{1}{s_x} + c_x\right).
$$
By Lemma \ref{lem:4point1}(a), $s_x \ra \infty$ and $c_x \ra 0$, therefore
\begin{align*}
\frac{1}{x}\sum_{n \in \mc{G}_T} |h_x(n)-h_x^\ast(n)|^2 = o(1).
\end{align*}
Combined with the contribution from $\mc{B}_T$, we obtain 
$$
V_3 \ll (KM)^2 + o(1).
$$
It remains to estimate $V_2$. Here, we consider the cases that $\omega_{\mc{T}_x}(n) = 1$ and $\omega_{\mc{T}_x}(n) \geq 2$ separately. \\
Write $n = tm$, where $p^k||n$, $p\in \mc{T}_x$ implies $p^k||t$. Write also $t^\ast := \prod_{p|t} p$ to be the squarefree kernel of $t$. Then as $K \geq 1$,
$$
h_x^\ast(n) - G_x(n) = \phi_K\left(\sum_{p|t} g_{\lambda_x}(p)/s_x + G_x(m)\right) - G_x(t) - G_x(m).
$$
If $\omega_{\mc{T}_x}(n) = \omega(t) = 1$ then $G_x(t) = \phi_K(g_{\lambda_x}(t^\ast)/s_x)$. By the Lipschitz property of $\phi_K$ in Lemma \ref{lem:clipProp}(i),
$$
|h_x^{\ast}(n)-G_x(n)| \leq |G_x(m)| + |\phi_K(g_{\lambda_x}(t^\ast)/s_x + G_x(m)) - \phi_K(g_{\lambda_x}(t^\ast)/s_x)| \leq 2|G_x(m)|.
$$
If, instead, $\omega_{\mc{T}_x}(n) \geq 2$ then we have $|G_x(t)| \leq \omega_{\mc{T}_x}(t)K$ and so
\begin{align*}
|h_x^\ast(n) - G_x(n)| &\leq |\phi_K(g_{\lambda_x}(t^\ast)/s_x + G_x(m)) - \phi_K(g_{\lambda_x}(t^\ast)/s_x)| + |G_x(m)| +|\phi_K(g_{\lambda_x}(t^\ast)/s_x)| +  |G_x(t)| \\
&\leq K(\omega_{\mc{T}_x}(n) + 1) + 2|G_x(m)| \leq 2K \omega_{\mc{T}_x}(n) + 2|G_x(m)|.
\end{align*}
It follows in either case that
\begin{align*}
V_2 &\ll \frac{K^2}{x} \sum_{n \leq x} \omega_{\mc{T}_x}(n)(\omega_{\mc{T}_x}(n)-1) + \sum_{\ss{1 < t \leq x \\ p|t\Rightarrow p \in \mc{T}_x}} \frac{1}{t} \cdot \frac{t}{x} \sum_{\ss{m \leq x/t \\ (m,\mc{T}_x) = 1}} |\tilde{g}_x(m)-a_x|^2 =: V_{2,1} + V_{2,2}. 
\end{align*}
The term $V_{2,1}$ is easily bounded via
\begin{align*}
V_{2,1} = \frac{1}{x}\sum_{\ss{pq \leq x \\ p,q \in \mc{T}}} \left\lfloor \frac{x}{[p,q]}\right\rfloor -\frac{1}{x}\sum_{\ss{p \leq x \\ p \in \mc{T}}} \left\lfloor \frac{x}{p}\right\rfloor \leq \sum_{\ss{p,q \leq x \\ p \neq q}} \frac{1}{pq} \ll M^2.
\end{align*}
Next, we consider $V_{2,2}$. First, for each $1 < t \leq x$ with $p|t \Rightarrow p \in \mc{T}_x$, write
$$
\tilde{a}_{x/t} := \frac{1}{s_x}\sum_{\ss{p \leq x/t \\ |g_{\lambda_x}(p)| \leq s_x}} \frac{g_{\lambda_x}(p)}{p}.
$$
By the Tur\'{a}n-Kubilius inequality and the constraint $|\tilde{g}_x(p)| \leq 1$ for all $p|n$ with $(n,\mc{T}_x) = 1$, we have 
\begin{align} \label{eq:TaylorExp}
&\sum_{\ss{1 < t \leq x \\ p|t\Rightarrow p \in \mc{T}_x}} \frac{1}{t} \cdot \frac{t}{x} \sum_{\ss{m \leq x/t \\ (m,\mc{T}_x)=1}} |\tilde{g}_x(m) - \tilde{a}_{x/t}|^2 \nonumber\\
&\ll \sum_{\ss{1 < t \leq x \\ p|t \Rightarrow p \in \mc{T}_x}} \frac{1}{t} \sum_{\ss{p \leq x \\ |g_{\lambda_x}(p)/s_x| \leq 1}} \frac{(g_{\lambda_x}(p)/s_x)^2}{p} \leq M \sum_{\ss{1 < t \leq x \\ p|t \Rightarrow p \in \mc{T}_x}} \frac{1}{t} \ll M \left(\exp\left(\sum_{\ss{p \leq x \\ p \in \mc{T}_x}} \frac{1}{p}\right) - 1\right) \ll M^2, 
\end{align}
where in the last step we used the fact that the argument in the exponential is bounded by $M \leq 1/2$. It remains to consider the discrepancy caused by replacing $a_x$ by $\tilde{a}_{x/t}$ in $V_{2,2}$. By the Cauchy-Schwarz inequality, for each $t$ in play,
\begin{align*}
|a_x-\tilde{a}_{x/t}|^2 \leq \left(\sum_{\ss{x/t < p \leq x\\ |g_{\lambda_x}(p)| \leq s_x}} \frac{(|g_{\lambda_x}(p)|/s_x)^2}{p}\right) \sum_{\ss{x/t < p \leq x \\ p \notin \mc{T}_x}} \frac{1}{p} \leq M \sum_{\ss{x/t < p \leq x \\ p \notin \mc{T}_x}} \frac{1}{p}.
\end{align*}
Therefore, we have
$$
V_{2,2} \ll M^2 + M \sum_{\ss{1 < t \leq x \\ p|t \Rightarrow p \in \mc{T}_x}} \frac{1}{t} \sum_{\ss{x/t < p \leq x \\ p \notin \mc{T}_x}} \frac{1}{p}.
$$
We split the range $t \leq x^{1-M}$ from $x^{1-M} < t \leq x$. In the first range we use Mertens' theorem and \eqref{eq:TaylorExp} to obtain
$$
\leq M\sum_{\ss{1 < t \leq x^{1-M} \\ p|t \Rightarrow p \in \mc{T}_x}} \frac{\log(\log x/\log(x/t)) + o(1)}{t} \ll M \log(1/M) \sum_{\ss{1 < t \leq x \\ p|t \Rightarrow p \in \mc{T}_x}} \frac{1}{t} + o(1) \ll M^2 \log(1/M) + o(1).
$$
In the range $x^{1-M} < t \leq x$ we split the range over $p$ as $(x/t,x^M]$ and $(x^M, x]$. Dropping the condition $p \notin \mc{T}_x$ and using \eqref{eq:TaylorExp}, the latter range contributes
$$
\leq M\left(\sum_{\ss{x^{1-M} < t \leq x \\ p|t \Rightarrow p \in \mc{T}_x}} \frac{1}{t}\right)\left(\sum_{x^M<p\leq x} \frac{1}{p}\right) \ll M^2 \log(1/M).
$$
For the range $(x/t,x^M]$ we swap orders of summation. Note that since $t > x^{1-M}$, every integer $t = qm$, where $q \in \mc{T}_x$, $p'|m \Rightarrow p' \in \mc{T}_x$ and $P^+(m) \leq q$. The point here is that since $p \leq x^M$, the range $(x/p,x]$ for $q$ is now much larger, and Mertens' theorem is efficient. The contribution here is then bounded above by
$$
\leq M\sum_{p \leq x^M} \frac{1}{p}\sum_{\ss{x/p < q \leq x \\ q \in \mc{T}_x}} \frac{1}{q} \sum_{\ss{x/(pq) < m \leq x \\ p'|m \Rightarrow p' \in \mc{T}_x}} \frac{1}{m} \ll M^2 \sum_{p \leq x^M} \frac{1}{p} \sum_{\ss{x/p < q \leq x}} \frac{1}{q} \ll \frac{M}{\log x} \sum_{p \leq x^M} \frac{\log p}{p} \ll M^3.
$$
Combining these estimates together, we obtain therefore, that 
$$
V_2 \ll K^2M^2 + M^2\log(1/M) + o(1).
$$ 
Adding together the contributions from $V_1,V_2$ and $V_3$ leads to the claim.
\end{proof}
\end{appendix}

\section*{Acknowledgments}
We warmly thank Prof. Andr\'{a}s S\'{a}rk\"{o}zy for his inspiring problem paper \cite{Sarkozy}, which has been a source of continued stimulation for the author since the beginning of his Ph.D in 2014.

\section*{AI Declaration}
Claude (Anthropic) was used to check references and for proofreading purposes. The latter revealed several minor gaps in the arguments from a previous version. The author fixed all of these gaps without AI assistance. More generally, all mathematical arguments in this paper are the work of the author.

\bibliographystyle{plain}
\bibliography{ErdSark.bib}

\end{document}